\documentclass{amsart}

\usepackage{amssymb}

\usepackage{MnSymbol}

\usepackage{graphicx}               

\usepackage{bbm}

\usepackage{mathtools}

\usepackage{hyperref}

\usepackage{extarrows}


\usepackage{color}

\newtheorem{theorem}{Theorem}[section]
\newtheorem{lemma}[theorem]{Lemma}
\newtheorem{proposition}[theorem]{Proposition}
\newtheorem{corollary}[theorem]{Corollary}

\theoremstyle{definition}
\newtheorem{definition}[theorem]{Definition}

\theoremstyle{remark}
\newtheorem{remark}[theorem]{Remark}

\numberwithin{equation}{section}

\newcommand{\A}{\mathbb{A}}
\newcommand{\C}{\mathbb{C}}
\newcommand{\Q}{\mathbb{Q}}
\newcommand{\R}{\mathbb{R}}
\newcommand{\Z}{\mathbb{Z}}

\newcommand{\roi}{\mathcal{O}}

\newcommand{\gn}{\mathfrak{n}}

\newcommand{\gp}{\mathfrak{p}}
\newcommand{\gf}{\mathfrak{f}}
\newcommand{\gq}{\mathfrak{q}}
\newcommand{\gm}{\mathfrak{m}}
\newcommand{\gM}{\mathfrak{M}}

\newcommand{\matrixxx}[2]{\left(\begin{matrix}
#1\\
#2
\end{matrix} \right)}

\newcommand{\smallmatrixx}[4]{(\begin{smallmatrix}
#1 & #2\\
#3 & #4
\end{smallmatrix})}
\space

\newcommand{\matrixx}[4]{\left(\begin{matrix}
#1 & #2\\
#3 & #4
\end{matrix} \right)}
\space

\newcommand{\tensorspace}{\roi_{K,p}}


\begin{document}

\title{Non-cuspidal Bianchi modular forms and Katz $p$-adic $L$-functions}


\author{Luis Santiago Eduardo Palacios Moyano}
\address{}
\curraddr{}
\email{}
\thanks{}


\keywords{CM modular forms, base change, non-cuspidal Bianchi modular forms, $p$-adic $L$-functions, Katz $p$-adic $L$-functions}

\date{}

\dedicatory{}

\begin{abstract}
Let $K$ be an imaginary quadratic field. In this article, we construct $p$-adic $L$-functions of non-cuspidal Bianchi modular forms by introducing the notions of $C$-cuspidality and partial Bianchi modular symbols. 

When $p$ splits in $K$, we focus on $p$-adic $L$-functions of non-cuspidal base change Bianchi modular forms, showing that they factor as products of two Katz $p$-adic $L$-functions. 
\end{abstract}

\maketitle

\section{Introduction}

Let $p$ be a fixed prime number. The study of $p$-adic $L$-functions has become a cornerstone of modern number theory, as these functions often serve as a vital link between algebraic number theory, arithmetic geometry, and analysis.

In \cite{pollack2011}, Pollack and Stevens constructed the $p$-adic $L$-function of cuspidal modular forms using the theory of overconvergent modular symbols. Their work has since inspired numerous generalizations (see for instance \cite{barrera2018}, \cite{chris2017}, \cite{canadian}, for the case of $\mathrm{GL}_2$). For non-cuspidal modular forms, Bellaïche and Dasgupta introduced the notions of $C$-cuspidality and partial modular symbols in \cite{bellaiche2015p}, which allowed them to construct the $p$-adic $L$-function of Eisenstein series.

Now, let $K$ be an imaginary quadratic field, and consider automorphic forms for $\mathrm{GL}_2$ over $K$, commonly referred to as Bianchi modular forms. For the cuspidal case, Williams constructed the $p$-adic $L$-function in \cite{chris2017} by developing the theory of Bianchi modular symbols.

In this article, we extend these ideas to the non-cuspidal setting, constructing the $p$-adic $L$-function for non-cuspidal Bianchi modular forms by combining and adapting the approaches of Williams and Bellaïche-Dasgupta. Additionally, we establish a significant connection between Katz's $p$-adic $L$-functions and our construction in the context of non-cuspidal base change Bianchi modular forms.

\subsection{\texorpdfstring{$C$}{Lg}-cuspidal Bianchi modular forms and partial symbols}{\label{s1 intro}}

For our study of non-cuspidal Bianchi modular forms, in Section \ref{Fourier} we generalize to the Bianchi setting the notion of \textit{$C$-cuspidality} given in \cite{bellaiche2015p}. Such property is related with the vanishing of constant terms of Fourier expansions at suitable cusps and for Bianchi modular forms with level at $p$ we have: 
\begin{equation*}
\{\text{cuspidal}\}\subset\{C\textrm{-cuspidal}\}\subset\{\text{Bianchi modular forms}\}. 
\end{equation*} 

For cuspidal Bianchi modular forms, an integral formula is known for their complex $L$-functions. In Proposition \ref{T4.6}, we establish a similar formula for $C$-cuspidal Bianchi modular forms.

In Section \ref{S3}, we introduce algebraic analogues of $C$-cuspidal Bianchi modular forms, called \textit{partial Bianchi modular symbols}, that are easier to study $p$-adically. To define them, we generalize the classical partial modular symbols introduced in \cite{bellaiche2015p} and adapt the Bianchi modular symbols of parallel weight used in \cite{chris2017}. In Proposition \ref{attach partial}, we prove that we can attach, in a Hecke-equivariant way, a partial Bianchi modular symbol to a $C$-cuspidal Bianchi modular form.

To link partial Bianchi modular symbols with spaces of $p$-adic distributions, we introduce the \textit{overconvergent partial Bianchi modular symbols} in Section \ref{section overconvergent} and generalizing \cite{chris2017}, we obtain a classicality result in Proposition \ref{partialcontrol}. 

In Section \ref{S4.4}, we construct the $p$-adic $L$-function of a $C$-cuspidal Bianchi eigenform $\mathcal{F}$ of weight $(k,\ell)$ and level $K_0(\gn)$, with $U_\mathfrak{p}$-eigenvalues $\lambda_\mathfrak{p}$ having suitable $p$-adic valuation (see Definition \ref{D10.4}). For this, we first attach to $\mathcal{F}$ a complex-valued partial Bianchi modular eigensymbol $\phi_\mathcal{F}$ using Proposition \ref{attach partial}. Then, using an isomorphism $\iota:\mathbb{C}\xrightarrow{\sim}\overline{\mathbb{Q}}_p$ (satisfying $\iota\circ\iota_\infty=\iota_p$ with some previously fixed embeddings $\iota_\infty:\overline{\Q}\hookrightarrow\C, \iota_p:\overline{\Q}\hookrightarrow\overline{\Q}_p$), we view $p$-adically the values of $\phi_\mathcal{F}$ and then lift it uniquely to an overconvergent partial Bianchi modular eigensymbol $\Psi_\mathcal{F}$ using Proposition \ref{partialcontrol}. After taking the Mellin transform of $\Psi_\mathcal{F}$, we obtain (see Theorem \ref{C-cuspidal p-adic} for the precise statement): 
\begin{theorem}\label{C-cuspidal p-adic intro}
There exists a locally analytic distribution $L_p^{\iota}(\mathcal{F},-)$ on the ray class group $\mathrm{Cl}_K(p^{\infty})$, such that for any Hecke character $\psi$ of $K$ of conductor $\mathfrak{f}|(p^\infty)$ and infinity type $(q,r)$ satisfying $0\leqslant q \leqslant k, 0\leqslant r \leqslant \ell$, we have 
\begin{equation*}
L_p^{\iota}(\mathcal{F},\psi_{p-\mathrm{fin}})=\left[\prod_{\mathfrak{p}|p}\left(1-\frac{1}{\lambda_\gp\psi(\gp)}\right)\right]\left[ \frac{D_Kw_K\tau(\psi)}{(-1)^{\ell+q+r}2\lambda_\mathfrak{f}} \right] \Lambda(\mathcal{F},\psi),
\end{equation*}
where $\psi_{p-\mathrm{fin}}$ is the $p$-adic avatar of $\psi$, $-D_K$ is the discriminant of $K$, $w_K=|\roi_K^\times|$, $\tau(\cdot)$ is a Gauss sum, $\lambda_\gf$ is the eigenvalue of $U_\gf=\prod_{\gp|p}U_\gp^{r_\gp}$ and  $\Lambda(\mathcal{F},-)$ is the normalized $L$-function of $\mathcal{F}$ as in (\ref{gammafactors}).

The distribution $L_p^{\iota}(\mathcal{F},-)$ satisfies suitable growth conditions (see Section \ref{admissible}) and therefore is unique.
\end{theorem}

By part ii) of Remark \ref{turn-C-cuspi}, we can construct the $p$-adic $L$-function of certain non-cuspidal Bianchi modular forms by turning them into $C$-cuspidal forms and applying the previous theorem.

\subsection{Non-cuspidal base change and Katz \texorpdfstring{$p$}{Lg}-adic \texorpdfstring{$L$}{Lg}-functions}

A case of historical interest where we can apply the methods outlined in Section \ref{s1 intro} to construct $p$-adic $L$-functions is the non-cuspidal base change scenario.

Suppose that $p$ splits in $K$ and let $\varphi$ be a Hecke character of $K$ with conductor coprime to $p$ and infinity type $(-k-1,0)$ with $k\geqslant0$. Denote by $\theta_{\varphi}$ the theta series attached to $\varphi$, which is a cuspidal modular form of weight $k+2$ and let $\theta_{\varphi/K}$ be the base change to $K$ of $\theta_{\varphi}$, which is known to be a non-cuspidal Bianchi modular form of weight $(k,k)$. In Proposition \ref{basechangequasi}, we prove that the ordinary $p$-stabilization of $\theta_{\varphi/K}$, which we denote by $\theta_{\varphi/K}^p$, is a $C$-cuspidal Bianchi modular form. In particular, by Theorem \ref{C-cuspidal p-adic intro}, we can obtain its $p$-adic $L$-function $L_p^{\iota}(\theta_{\varphi/K}^p,-)$. 

Part of our interest in the Bianchi modular form $\theta_{\varphi/K}^p$ relies on the fact that we can avoid the use of an isomorphism $\iota:\mathbb{C}\xrightarrow{\sim}\overline{\mathbb{Q}}_p$ to construct its $p$-adic $L$-function, and additionally we can factorize its $p$-adic $L$-function as the product of two Katz $p$-adic $L$-functions.

In Section \ref{basechangeLfunction}, we study the $L$-function of $\theta_{\varphi/K}$ and prove that it factors as the product of two Hecke $L$-functions in Lemma \ref{Prop 9.1}. Using such factorization, combined with an algebraicity result of Hecke $L$-functions, we show the existence of a \textit{complex period} $\Omega_{\theta_{\varphi/K}}$, which allows us to prove algebraicity of critical $L$-values of $\theta_{\varphi/K}$ and $\theta_{\varphi/K}^p$ in Proposition \ref{period base change} and Corollary \ref{lemma6.9}, respectively.

Let $\phi_{\theta_{\varphi/K}^p}$ be the complex-valued partial Bianchi modular symbol attached to $\theta_{\varphi/K}^p$ in Proposition \ref{attach partial}, then by defining $\phi_{\theta_{\varphi/K}^p}^\mathrm{alg}:=\phi_{\theta_{\varphi/K}^p}/\Omega_{\theta_{\varphi/K}}$ we prove in Proposition \ref{Prop6.7} that $\phi_{\theta_{\varphi/K}^p}^\mathrm{alg}$ has algebraic values and consequently, we can view it as having $p$-adic values. Finally, using Proposition \ref{partialcontrol}, we can lift $\phi_{\theta_{\varphi/K}^p}^\mathrm{alg}$ to an overconvergent Bianchi modular symbol and taking its Mellin transform we obtain the following (see Theorem \ref{Cor8.3}):

\begin{theorem}{\label{C1.2}} 
There exists a unique measure $L_p(\theta_{\varphi/K}^p,-)$ on the ray class group $\mathrm{Cl}_K(p^{\infty})$ such that for any Hecke character $\psi$ of $K$ of conductor $\gf=\gp^t\overline{\gp}^s$ and infinity type $(q,r)$ satisfying $0\leqslant q,r \leqslant k$, we have 
\begin{equation*}
L_p(\theta_{\varphi/K}^p,\psi_{p-\mathrm{fin}})=\left[\prod_{\gq|p}\left(1-\frac{1}{\varphi(\overline{\gp})\psi(\gq)}\right)\right]\left[ \frac{D_Kw_K\tau(\psi)}{(-1)^{k+q+r}2 \varphi(\overline{\gp})^{t+s}\Omega_{\theta_{\varphi/K}}} \right] \Lambda(\theta_{\varphi/K}^p,\psi).
\end{equation*}
\end{theorem} 

The $p$-adic $L$-functions $L_p(\theta_{\varphi/K}^p,-)$ and $L_p^\iota(\theta_{\varphi/K}^p,-)$, can be related, more precisely, in Proposition \ref{equality of measures} we prove the following equality of measures: 
\begin{equation*}
L_p(\theta_{\varphi/K}^p,-)=\frac{L_p^\iota(\theta_{\varphi/K}^p,-)}{\iota(\Omega_{\theta_{\varphi/K}})}.
\end{equation*}

The factorization of the $L$-function of $\theta_{\varphi/K}$ in Lemma \ref{Prop 9.1} translates to the $p$-adic side. In fact, consider the $p$-adic $L$-function of $\theta_{\varphi/K}$ of Theorem \ref{C1.2} and the $p$-adic $L$-function $L_p(-)$ constructed by Katz in \cite{katz1978p} and generalized by Hida and Tilouine in \cite{hida1993anti}. We obtain the following (see Theorem \ref{T9.4} for more details):

\begin{theorem}
Under the hypothesis of Theorem \ref{C1.2}, for all $\kappa$ we have
\begin{equation*}
L_p(\theta_{\varphi/K}^p,\kappa)=\frac{L_p(\varphi_{p-\rm{fin}}^c\kappa \chi_p)L_p(\varphi_{p-\rm{fin}}^c\kappa^c \lambda_p\chi_p)}{\Omega_p^{2k+2}},
\end{equation*}
where $\lambda_p,\chi_p$ are the $p$-adic avatars of the character $\lambda_K$ (see Lemma \ref{Prop 9.1}) and the adelic norm $|\cdot|_{\A_K}$ respectively, and $\Omega_p$ is the $p$-adic period in Theorem \ref{Katztheorem}.
\end{theorem}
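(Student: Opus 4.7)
The plan is to prove the identity by showing both sides are bounded $p$-adic measures on $\mathrm{Cl}_K(p^\infty)$ that agree on a dense subset of characters. Since $f_{\varphi/K}^p$ is ordinary (the $U_\mathfrak{p}$-eigenvalues of the ordinary $p$-stabilization lie in $\roi_{\overline{\Q}_p}^\times$), the distribution of Theorem~\ref{C1.2} is in fact a measure, and the two Katz $p$-adic $L$-functions on the right are measures as well, so their quotient by $\Omega_p^{2k+2}$ is a measure. By the density of the $p$-adic avatars $\psi_{p-\mathrm{fin}}$, as $\psi$ runs through Hecke characters of $K$ of conductor dividing $p^\infty$ and infinity type $(q,r)$ with $0\leqslant (q,r)\leqslant (k,k)$, it suffices to verify the equality after evaluation at $\kappa=\psi_{p-\mathrm{fin}}$ for such $\psi$.

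For the left-hand side, Theorem~\ref{C1.2} gives
\begin{equation*}
L_p(f_{\varphi/K}^p,\psi_{p-\mathrm{fin}})=(*)\,\Lambda(f_{\varphi/K}^p,\psi),
\end{equation*}
with an explicit interpolation factor $(*)$ accounting for Euler factors at $p$, the conductor $\mathfrak{f}$ of $\psi$, and the complex period $\Omega_{f_{\varphi/K}}$ fixed in Proposition~\ref{period base change}. I would then feed in the complex factorization of Lemma~\ref{Prop 9.1}, expressing $\Lambda(f_{\varphi/K}^p,\psi)$ as an essentially explicit multiple of the product of two Hecke $L$-values $\Lambda(\varphi^c\psi\chi)\Lambda(\varphi^c\psi^c\chi)$ (the two Galois conjugate factors corresponding to the base change), with the normalization dictated by how $\Omega_{f_{\varphi/K}}$ was constructed from the algebraicity of Hecke $L$-values.

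For the right-hand side, I would invoke the interpolation property of the Katz $p$-adic $L$-function (as recalled in Theorem~\ref{Katztheorem}) at the two characters $\varphi_{p-\mathrm{fin}}^c\psi_{p-\mathrm{fin}}\chi_p$ and $\varphi_{p-\mathrm{fin}}^c\psi_{p-\mathrm{fin}}^c\chi_p$. Since $\chi_p$ is the $p$-adic avatar of the adelic norm and the $p$-adic avatar is compatible with products and complex conjugation, these characters are the $p$-adic avatars of the Hecke characters $\varphi^c\psi\chi$ and $\varphi^c\psi^c\chi$ appearing in the complex factorization. The critical range hypothesis $0\leqslant (q,r)\leqslant (k,k)$, together with the infinity type $(-k-1,0)$ of $\varphi$ and the norm twist, places the arguments precisely in Katz's region of interpolation. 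Each factor then equals a product of an explicit Euler-type multiplier, a $p$-adic unit, the corresponding Hecke $L$-value and $\Omega_p^{k+1}$, so their product divided by $\Omega_p^{2k+2}$ matches the complex expression term by term.

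The main obstacle will be the careful bookkeeping of interpolation multipliers: one must track that Katz's Euler factors at the two primes above $p$, multiplied out, reassemble into the single factor $(*)$ of Theorem~\ref{C1.2} (in particular, the removed Euler factors at $\mathfrak{p}$ and $\overline{\mathfrak{p}}$ must recombine correctly using the fact that $f_{\varphi/K}^p$ has $U_\mathfrak{p}$-eigenvalues $\varphi^c(\mathfrak{p})$ and $\varphi^c(\overline{\mathfrak{p}})$ forced by ordinarity), and that the ratio $\Omega_{f_{\varphi/K}}/\iota(\Omega_p^{2k+2})$ is the algebraic constant it has to be, namely whatever comes out of the CM algebraicity comparison between the two sides. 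Once this matching is verified on the dense family of critical $\psi$, the abstract uniqueness of bounded measures with prescribed values on a Zariski-dense set concludes the proof.
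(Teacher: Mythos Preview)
Your strategy matches the paper's: both sides are measures, so it suffices to verify the identity on a Zariski-dense set of characters (the paper uses only finite-order $\psi$, which already suffices), and this verification is the explicit comparison of interpolation formulas via Lemma~\ref{Prop 9.1} and Theorem~\ref{Katztheorem}, with Euler factors, Gauss sums, and periods matched by direct computation. Three small corrections to your bookkeeping: both the $U_{\mathfrak{p}}$- and $U_{\overline{\mathfrak{p}}}$-eigenvalues of $f_{\varphi/K}^p=f_{\varphi/K}^{\beta\beta}$ equal $\varphi(\overline{\mathfrak{p}})$ (not $\varphi^c(\overline{\mathfrak{p}})=\varphi(\mathfrak{p})$, which has slope $k{+}1$ and would violate ordinarity); the complex factorization in Lemma~\ref{Prop 9.1} carries a twist by $\lambda_K=\chi_K\circ N$ on one of the two Hecke $L$-factors; and the period comparison here is purely algebraic---no $\iota$ enters---with the residual scalar $\tfrac{2}{w}\bigl(\tfrac{\sqrt{D}}{2i}\bigr)^{2k+2}$ absorbed by renormalizing $\Omega_{f_{\varphi/K}}$.
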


\section*{Acknowledgements}

I would like to thank my PhD advisor Daniel Barrera for suggesting this topic to me, as well as for the many conversations we have had on the subject. Special thanks to Chris Williams for kindly answering a lot of questions. I also thank Samit Dasgupta, Antonio Cauchi, Guhan Venkat and Eduardo Friedman for helpful comments that led to an improvement of this article. Finally, I would like to thank the referee for a careful reading of this manuscript and for excellent suggestions on improving
this text.  This work was funded by the National Agency for Research and Development (ANID)/Scholarship Program/BECA DOCTORADO NACIONAL/2018 - 21180506 and by Anid Fondecyt Postdoctorado 3240129. Likewise, some research visits were supported by the project FONDECYT 11201025.

\section{\texorpdfstring{$C$}{Lg}-cuspidal Bianchi modular forms}
In this section, we recall basic properties of Bianchi modular forms and define a suitable vanishing condition on its constant terms called $C$-cuspidality. In Section \ref{L-C-cuspi} we study $L$-functions of $C$-cuspidal forms.

\subsection{Notations} \label{notations}

Let $p$ be a rational prime and fix throughout the paper embeddings $\iota_\infty:\overline{\Q}\hookrightarrow\C$ and $\iota_p:\overline{\Q}\hookrightarrow\overline{\Q}_p$, note that $\iota_p$ fixes a $p$-adic valuation $v_p$ on $\overline{\Q}_p$. Let $K$ be an imaginary quadratic field with discriminant $-D_K$, ring of integers $\mathcal{O}_K$ and different ideal $\mathcal{D}_K$ generated by $\delta=\sqrt{-D_K}$. Let $\gn=(p)\gm$ be an ideal of $\mathcal{O}_K$ with $\gm$ coprime to $(p)$. We denote by $id$ and $c$ the two embeddings of $K$ into $\C$ and identify them with $\mathrm{Hom}(K,\overline{\Q})$ under $\iota_\infty$. Henceforth, we write $(k,\ell)$ for $k \cdot id+ \ell\cdot c\in\Z[{id,c}]$, with $k,\ell\geqslant0$. Denote by $K_{\gq}$ the completion of $K$ with respect to the prime $\gq$ of $K$, $\mathcal{O}_{\gq}$ the ring of integers of $K_{\gq}$ and fix a uniformizer $\varpi_\gq$ at $\gq$. Denote the adele ring of $K$ by $\mathbb{A}_K=\C\times\A_K^f$ where $\mathbb{A}_K^f$ are the finite adeles. Furthermore, denote the class group of $K$ by $\mathrm{Cl}(K)$ and the class number of $K$ by $h$, and -once and for all- fix a set of representatives $I_1,..., I_h$ for $\mathrm{Cl}(K)$, with $I_1=\mathcal{O}_K$ and each $I_i$ for $2\leqslant i \leqslant h$ integral and prime, with each $I_i$ coprime to $\gn$ and $\mathcal{D}_K$.
Let $V_{n}(R)$ denote the space of homogeneous polynomials over a ring $R$ in two variables of degree $n\geqslant0$. Note that $V_{n}(\mathbb{C})$ is an irreducible complex right representation of $\mathrm{SU}_2(\mathbb{C})$, and denote it by $\rho_{n}$. 

For a general Hecke character $\psi$ of $K$ we denote by $\psi_{\infty}$, $\psi_f$ the restriction of $\psi$ to $\C^\times$, $\mathbb{A}_K^{f,\times}$, respectively. Likewise, for a prime $\gq$, we denote $\psi_{\gq}$ the restriction of $\psi$ to $K_{\gq}^{\times}$, and for an ideal $\gf\subset\roi_K$ we write $\psi_\gf:=\prod_{\gq|\gf}\psi_\gq$. 

\subsection{Bianchi modular forms} \label{background} 
We write $K_0(\gn)$ for the open compact subgroup of $\mathrm{GL_2}(\roi_K\otimes_\Z\widehat{\Z})$ of matrices $\smallmatrixx{*}{*}{0}{*}$ modulo $\gn$. Let $\varphi$ be a Hecke character, with infinity type $(-k,-\ell)$ and conductor dividing $\gn$. For $u_f=\smallmatrixx{a}{b}{c}{d}\in K_0(\gn)$ we set $\varphi_{\gn}(u_f)=\varphi_{\gn}(d)=\prod_{\gq|\gn}\varphi_{\gq}(d_{\gq})$.

\begin{definition}
We say a function $\mathcal{F}:\mathrm{GL_2}(\A_K) \rightarrow V_{k+\ell+2}(\mathbb{C})$ is a \textit{Bianchi modular form} of weight $(k,\ell)$, level $K_0(\gn)$ and central action $\varphi$ if it satisfies:

i) $\mathcal{F}$ is left-invariant under $\mathrm{GL_2}(K)$;

ii) $\mathcal{F}(zg)=\varphi(z)\mathcal{F}(g)$ for $z\in \mathbb{A}_K^\times \cong Z(\mathrm{GL_2}(\mathbb{A}_K))$, where $Z(G)$ denotes the center of the group $G$; 

iii) $\mathcal{F}(gu)=\varphi_\gn(u_f)\mathcal{F}(g)\rho_{k+\ell+2}(u_\infty)$ for $u=u_f\cdot u_\infty \in K_0(\gn)\times\mathrm{SU_2}(\mathbb{C})$;

iv) $\mathcal{F}$ is an eigenfunction of the operators $D_{id}$, $D_c$, with eigenvalues  
$k^2/2+k$ and $\ell^2/2+\ell$ respectively. Here $D_\sigma/4 $, for $\sigma\in\{id,c\}$, denotes a component of the Casimir operator in the Lie algebra $\mathfrak{sl}_2(\mathbb{C})\otimes_{\mathbb{R}}\mathbb{C}$ (see \cite[\S 2.4]{hida1994critical}), and we are considering $\mathcal{F}(g_{\infty}g_f)$ as a function of $g_\infty \in \mathrm{GL_2}(\mathbb{C}).$

The space of such functions will be denoted by $\mathcal{M}_{(k,\ell)}(K_0(\gn),\varphi)$. We say $\mathcal{F}$ is a \textit{cuspidal} Bianchi modular form if also satisfies:

v) for all $g \in \mathrm{GL_2}(\mathbb{A}_K)$
\begin{equation*}
\int_{K\backslash\mathbb{A}_K}\mathcal{F}\left(\matrixx{1}{u}{0}{1}g\right)du=0,
\end{equation*}
where $du$ is the Lebesgue measure on $\mathbb{A}_K$. The space of such functions will be denoted by $S_{(k,\ell)}(K_0(\gn),\varphi)$.
\label{D1.2}
\end{definition}

\begin{remark}
From \cite[Cor 2.2]{hida1994critical}) we have $S_{(k,\ell)}(K_0(\gn),\varphi)=0$ if $k\neq \ell$ i.e., all non-trivial cuspidal Bianchi modular forms have parallel weight $(k,k)$.
\end{remark}

Recall the set of representatives $I_1,..., I_h$ for $\mathrm{Cl}(K)$ fixed in Section \ref{notations} and denote by $\varpi_i$ their corresponding fixed uniformizer. Set $g_i=\smallmatrixx{t_i}{0}{0}{1}$ where $t_1=1$ and for each $i\geqslant2$, define $t_i=(1,...,1,\varpi_i,1,...)\in\mathbb{A}_K^{\times}$. Since $\mathrm{GL_2}(\mathbb{A}_K)=\coprod_{i=1}^{h}\mathrm{GL_2}(K)\cdot g_i\cdot [\mathrm{GL_2}(\mathbb{C})\times K_0(\gn)]$, a Bianchi modular form $\mathcal{F}\in\mathcal{M}_{(k,\ell)}(K_0(\gn),\varphi)$, descends to a collection of $h$ functions $F^i: \mathrm{GL_2}(\mathbb{C})\longrightarrow V_{k+\ell+2}(\mathbb{C})$ via $F^i(g):=\mathcal{F}(g_ig)$.

Since $\mathrm {GL_2}(\mathbb{C})/\C^\times\mathrm{SU_2}(\mathbb{C})$ is isomorphic to the hyperbolic 3-space $\mathcal{H}_3:=\mathbb{C}\times\mathbb{R}_{>0}$, we can descend further using properties ii) and iii) in Definition \ref{D1.2} to obtain $h$ functions $f^i: \mathcal{H}_3\longrightarrow V_{k+\ell+2}(\mathbb{C})$ via $f^i(z,t):= t^{-1}F^i\smallmatrixx{t}{z}{0}{1}$. 

Let $\gamma=\smallmatrixx{a}{b}{c}{d}\in
\Gamma_0^i(\gn):=\mathrm{SL_2}(K)\cap g_i K_0(\gn) g_i^{-1}\mathrm{GL_2}(\mathbb{C})$, then each $f^i$ satisfies the automorphy condition 
\begin{equation}
f^i(\gamma\cdot(z,t))=\varphi_\gn(d)^{-1}f^i(z,t)\rho_{k+\ell+2}(J(\gamma;(z,t))),
\label{e2.4}
\end{equation}
where $\gamma\cdot(z,t):=\left(\frac{(az+b)\overline{(cz+d)}+a\overline{c}|t|^2}{|cz+d|^2+|ct|^2}, \frac{|ad-bc|t}{|cz+d|^2+|ct|^2} \right)$ is the standard action of $\mathrm {GL_2}(\mathbb{C})$ on $\mathcal{H}_3$, in which $|\cdot|$ denotes the norm in $\C$; and $J(\gamma;(z,t)):=\smallmatrixx{cz+d}{\overline{ct}}{-ct}{\overline{cz+d}}$. Thus $f^i\in \mathcal{M}_{(k,\ell)}(\Gamma_0^i(\gn),\varphi_\gn^{-1})$, the space of \textit{Bianchi modular forms} on $\mathcal{H}_3$ satisfying the automorphy condition (\ref{e2.4}). If $\mathcal{F}\in S_{(k,k)}(K_0(\gn),\varphi)$ is cuspidal then we say $f^i$ is a cuspidal Bianchi modular form and the space of such forms is denoted by $S_{(k,k)}(\Gamma_0^i(\gn),\varphi_\gn^{-1})$.

\begin{definition}\label{D2.3}
Let $\gamma\in \mathrm{GL_2}(\mathbb{C})$, for $f^i\in \mathcal{M}_{(k,\ell)}(\Gamma_0^i(\mathfrak{n}),\varphi_\gn^{-1})$, define the function $f^i|_\gamma:\mathcal{H}_3\rightarrow V_{k+\ell+2}(\C)$ by
\begin{equation}
(f^i|_\gamma)(z,t)=det(\gamma)^{-k/2}\overline{det(\gamma)}^{-\ell/2}f^i(\gamma \cdot(z,t))  \rho_{k+\ell+2}^{-1}\left(J\left(\frac{\gamma}{\sqrt{det(\gamma)}} ;(z,t)\right)\right).
\end{equation}
\end{definition}

\begin{remark}
Note $f^i\in \mathcal{M}_{(k,\ell)}(\Gamma_0^i(\mathfrak{n}),\varphi_\gn^{-1})$ satisfies:

i) $f^i|_g(0,1)=F^i(g)$ for $g\in\mathrm{GL}_2(\mathbb{C})$, in particular, if $g=\smallmatrixx{t}{z}{0}{1}$ with $(z,t)\in\mathcal{H}_3$, we recover $f^i(z,t)= t^{-1}F^i\smallmatrixx{t}{z}{0}{1}$.

ii) $\left(f^i|_\gamma\right)(z,t)=\varphi_\gn(d)^{-1}f^i(z,t)$ for $\gamma=\smallmatrixx{a}{b}{c}{d} \in \Gamma_0^i(\mathfrak{n})$. 
\end{remark}

\subsection{Fourier expansion and cuspidal conditions}{\label{Fourier}}

Recall that $k,\ell\geqslant0$, consider the set $R_{k,\ell}=\{(\pm(k+1),\pm(\ell+1))\}$ and for $r=(r_1,r_2)\in R_{k,\ell}$ define 
\begin{equation*}
h(r)=\frac{k+\ell+2}{2}+\frac{r_2-r_1}{2},\;\;\text{and}\;\;
g(r)=\left(\frac{1}{2}[r_1-(k+1)],\frac{1}{2}[r_2-(\ell+1)]\right).    
\end{equation*}

Let $\mathcal{F}$ be a Bianchi modular form of weight $(k,\ell)$, level $K_0(\gn)$ and central action $\varphi$, then it has a Fourier expansion given by (see \cite[Thm 6.7]{hida1994critical}):
\begin{align}\label{e.2.1}
\mathcal{F}\left[\matrixx{t}{z}{0}{1}\right]=|t|_{\mathbb{A}_K}\bigg[&\sum_{r\in R_{k,\ell}}t_\infty^{g(r)}\binom{k+\ell+2}{h(r)}c_r(t\mathcal{D}_K,\mathcal{F})X^{k+\ell+2-h(r)}Y^{h(r)}\\
&+ \sum_{\alpha \in K^\times} c(\alpha t \mathcal{D}_K,\mathcal{F})W(\alpha t_\infty) \textbf{e}_K(\alpha z)\bigg], \nonumber 
\end{align}
where
\begin{itemize}
\item[i)] The Fourier coefficients $c(\cdot,\mathcal{F})$, and $c_r(\cdot,\mathcal{F})$ for each $r\in R_{k,\ell}$, are functions on the fractional ideals of $K$ that vanish outside the integral ideals.
\item[ii)] $\textbf{e}_K$ is an additive character of $K\backslash \mathbb{A}_K$ defined by
\begin{equation*}
\textbf{e}_K=\left( \prod_{\gq \mathrm{prime}} (\textbf{e}_q \circ \mathrm{Tr}_{K_\gq/\mathbb{Q}_q})\right)\cdot (\textbf{e}_\infty \circ \mathrm{Tr}_{\mathbb{C}/\mathbb{R}}), 
\end{equation*}
for 
\begin{equation*}
\textbf{e}_q\left( \sum_j d_jq^j \right)=e^{-2\pi i \sum_{j<0} d_jq^j} \;\;\;  \mathrm{and} \;\;\; \textbf{e}_\infty(r)=e^{2\pi ir};    
\end{equation*}
\item[iii)] $W:\mathbb{C}^\times \rightarrow V_{k+\ell+2}(\mathbb{C})$ is the Whittaker function
\begin{equation*}
W(s):= \sum_{n=0}^{k+\ell+2} \binom{k+\ell+2}{n}\left( \frac{s}{i|s|} \right)^{\ell+1-n} K_{n-\ell-1}(4 \pi |s|)X^{k+\ell+2-n}Y^n,    
\end{equation*}
where $K_n(x)$ is the modified Bessel function of order $n$ as in \cite[\S 6]{hida1994critical}.
\end{itemize}

\begin{remark}\label{Remark1.8}
i) Note that $W(s)$ is not symmetric in $k$ and $\ell$, this comes from the definition of the Whittaker function in \cite[(6.1)]{hida1994critical} after fixing the weight $(k,\ell)$.

\noindent ii) Let $\mathcal{F}=\sum_{n=0}^{k+\ell+2}\mathcal{F}_nX^{k+\ell+2-n}Y^n$ be a Bianchi modular form, then by (\ref{e.2.1}), the constant term in the Fourier expansion of $\mathcal{F}_n$ is trivial if $n\notin\{h(r):r\in R_{(k,\ell)}\}=\{0,k+1,\ell+1,k+\ell+2\}$. This fact will be used in the proof of Proposition \ref{basechangequasi}.
\end{remark}

The Fourier expansion of $\mathcal{F}$ descends to $\mathcal{H}_3$ by 
\begin{equation}\label{e.3.2} 
f^j\left((z,t); \matrixxx{X}{Y} \right)=\sum_{n=0}^{k+\ell+2}f_n^j(z,t) X^{k+\ell+2-n}Y^{n},
\end{equation}
where
\begin{align*}\label{e.3.2}
f_n^j(z,t)&=\left[t^{\frac{r_1+r_2-k-\ell}{2}}\binom{k+\ell+2}{n}c_r(t_j\mathcal{D}_K)\right]\delta_{n,h(r)}\\
&+|t_j|_f t\binom{k+\ell+2}{n}\sum_{\alpha\in K^\times} \left[ c(\alpha t_j\mathcal{D}_K)\left(\frac{\alpha}{i|\alpha|}\right)^{\ell+1-n} K_{n-\ell-1}(4\pi|\alpha|t)e^{2\pi i (\alpha z + \overline{\alpha z})}\right]    
\end{align*}
with $r=(r_1,r_2)\in R_{k,\ell}$ as above and $\delta_{n,h(r)}=1$ if $n=h(r)$ and $\delta_{n,h(r)}=0$ otherwise. Also note that to ease notation, we have written $c_r(t_j\mathcal{D}_K)$ (resp.  $c(\alpha t_j \mathcal{D}_K)$) instead $c_r(t_j\mathcal{D}_K,\mathcal{F})$ (resp. $c(\alpha t_j\mathcal{D}_K,\mathcal{F})$).

For each $i=1,...,h$, equation (\ref{e.3.2}) may be thought of as the Fourier expansion of $f^i$ at the cusp of infinity, which by Remark \ref{Remark1.8}, satisfies that the constant term in the Fourier expansion of $f^i_n$ is trivial if $n\notin\{0,k+1,\ell+1,k+\ell+2\}$.  

We must consider Fourier expansions at all the ``$K$-rational'' cusps $\mathbb{P}^1(K)=K\cup\{\infty\}$, for this, let $\sigma\in\mathrm{GL}_2(K)$ sending $\infty$ to the cusp $s$. For each $i=1,..., h$, since $f^i\in \mathcal{M}_{(k,\ell)}(\Gamma_0^i(\gn),\varphi_\gn^{-1})$ then $f^i|_{\sigma}\in \mathcal{M}_{(k,\ell)}(\sigma^{-1}\Gamma_0^i(\gn)\sigma,\varphi_{\gn}^{-1})$ and hence $f^i|_{\sigma}$ has a Fourier expansion as in (\ref{e.3.2}).

\begin{definition}
We say that $f^i$ \textit{vanishes} at the cusp $s$ if $(f^i|_{\sigma})_n$ has trivial constant term for $0\leqslant n\leqslant k+\ell+2$, and \textit{quasi-vanishes} at the cusp $s$ if $(f^i|_{\sigma})_n$ has trivial constant term for $1\leqslant n\leqslant k+\ell+1$.
\end{definition}

\begin{remark}

i) The property of vanishing and quasi-vanishing at the cusp $s$ are well-defined, i.e., are independent of the choice of $\sigma$; for the vanishing case see \cite[\S 6.2.2]{bygott1998modular} and note that the same argument works for quasi-vanishing.

ii) Let $\mathcal{F}\in S_{(k,k)}(K_0(\gn),\varphi)$ be a cuspidal Bianchi modular form, then the cuspidal condition v) in Definition \ref{D1.2} is equivalent to the vanishing of $f^i$ at all cusps for each $0\leqslant i \leqslant h$ (see \cite[Prop 3.2]{zhao1993certain}).
\end{remark}

Recall that $\gn=(p)\gm$ with $\gm$ coprime to $(p)$ and define for each $i=1,..., h$ the set of cusps
\begin{equation*}
C_i:=\Gamma_0^i(\gm)\infty\cup\Gamma_0^i(\gm)0.
\end{equation*}

Since $\Gamma_0^i(\gm)=\{\smallmatrixx{a}{b}{c}{d}\in\mathrm{SL}_2(K): b\in I_i,\;c\in\gm I_i^{-1}\}$, we have
$C_i\subset\mathbb{P}^1(K)$ contains $\infty$ and elements $\frac{x}{y}\in K$ with $x\in I_i$ and either $y\in\gm$ or $y\in(\mathcal{O}_K/\gm)^\times$. 

\begin{definition}{\label{Def 2.9}}
We say that $f^{i}$ is \textit{$C_i$-cuspidal} if quasi-vanishes at all cusps in $C_i$.
\end{definition}

The previous definition of $C_i$-cuspidality differs from the one given in \cite{bellaiche2015p} for modular forms. We are not asking for the vanishing of $f^i$ at the cusps $C_i$, instead, we just need quasi-vanishing, i.e., we do not care about the vanishing of the functions $f^i_0$ and $f^i_{k+\ell+2}$ at the cusps $C_i$. The motivation for considering quasi-vanishing instead of vanishing will become clear in Proposition \ref{T4.6}.

To state $C_i$-cuspidality for all $i$ as a property of $\mathcal{F}$, we write $C:=(C_1,...,C_h)$ and introduce the notion of $C$-cuspidality.

\begin{definition}{\label{C-cusp}}
We say that $\mathcal{F}$ is \textit{C-cuspidal} if $f^i$ is $C_i$-cuspidal for $i=1,..., h$.
\end{definition}

\begin{remark}\label{rem1.11}
Note that for Bianchi modular forms with level at $p$ we have 
\begin{equation*}
\{\text{cuspidal}\}\subset\{C\text{-cuspidal}\}\subset\{\text{Bianchi modular forms}\}.   
\end{equation*}
\end{remark}

We now define Hecke operators acting on $\mathcal{M}_{(k,\ell)}(K_0(\gn),\varphi)$. For this, recall our fixed representatives $I_1,...,I_h$ for the class group, and note that for a prime $\gq$ of $K$ and each $i \in \{1,...,h\}$ there is a unique $j_i\in\{1,...,h\}$ such that $\gq I_i =(\alpha_i)I_{j_i}$, for $\alpha_i\in K$. Then we define the Hecke operators $T_\gq$ acting on Bianchi modular forms $\mathcal{F}=(f^1,...,f^h)$ by double cosets on each component by
\begin{equation}{\label{e.1.19}}
\mathcal{F}|_{T_\gq}:=\left(\alpha_1^k\overline{\alpha_1}^\ell f^{j_1}|_{\left[\Gamma_0^{j_1}(\gn)\smallmatrixx{1}{0}{0}{\alpha_1}\Gamma_0^1(\gn)\right]},...,\alpha_h^k\overline{\alpha_h}^\ell f^{j_h}|_{\left[\Gamma_0^{j_h}(\gn)\smallmatrixx{1}{0}{0}{\alpha_h}\Gamma_0^h(\gn)\right]}\right).
\end{equation}
If $\gq|\gn$ we denote the Hecke operator by $U_\gq$. Note that when $K$ has class number one and $k=\ell$, the previous action is the same from \cite[\S 2.4]{palacios}.

An eigenform is a Bianchi modular form that is a simultaneous eigenvector for all the Hecke operators, in this case, the eigenvalues and Fourier coefficients of the eigenform at each prime $\gq$ coincides.

\begin{definition}
Let $\mathcal{H}_{\gn,p}$ denote the $\Q$-algebra generated by the Hecke operators $\{T_\gq : (\gq,\gn)=1\}$ and $\{U_\gp :\gp|p\}$.
\end{definition}

\begin{lemma}\label{l: cuspides}
Let $\gn=(p)\gm$ be an ideal with $(\gm,(p))=1$, $\gq$ be a prime ideal not dividing $\gm$ and $C_i=\Gamma_0^i(\gm)\infty\cup\Gamma_0^i(\gm)0$ for each $i=1,..., h$, then:

i) $\Gamma_0^i(\gn)$ stabilizes $C_i$.

ii) $\smallmatrixx{1}{0}{0}{\alpha_i}\cdot c_i\in C_{j_i}$ for all $c_i\in C_i$, where $\gq I_i =(\alpha_i)I_{j_i}$.    
\end{lemma}

\begin{proof}
For i) just note that $\Gamma_0^i(\gm)$ (and hence its subgroup $\Gamma_0^i(\gn)$) stabilizes $C_i$.

To prove ii), since $(\gq,\gm)=1$ and $(I_i,\gm)=1$, there exists $y_\gq\in\gq$ and $y_i\in I_i$, such that $y_\gq,y_i\in(\mathcal{O}_K/\gm)^\times$. By the identity $\gq I_i =(\alpha_i)I_{j_i}$, there exist an element $t_{j_i}\in I_{j_i}$ such that
\begin{equation}{\label{equat1.20}}
\alpha_i=\frac{y_\gq y_i}{t_{j_i}}.  
\end{equation}
Let $c_i=x/y$, then $x\in I_i$ and either $y\in\gm$ or $y\in(\mathcal{O}_K/\gm)^\times$, then we have
\begin{equation*}
\smallmatrixx{1}{0}{0}{\alpha_i}\cdot \frac{x}{y}= \frac{x}{\alpha_iy}=\frac{t_{j_i}x} {y_\gq y_iy} 
\end{equation*}
with $t_{j_i}x\in I_{j_i}$ and either $y_\gq y_iy\in\gm$ or $y_\gq y_iy\in(\mathcal{O}_K/\gm)^\times$ then $\smallmatrixx{1}{0}{0}{\alpha_i}\cdot c_i \in C_{j_i}$. 
\end{proof}

\begin{proposition}
Let $\mathcal{F}\in\mathcal{M}_{(k,\ell)}(K_0(\gn),\varphi)$ be a $C$-cuspidal Bianchi modular form, with $\gn=(p)\gm$ and $(\gm,(p))=1$; then $\mathcal{H}_{\gn,p}$ acts on $C$-cuspidal forms.
\end{proposition}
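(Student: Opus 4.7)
The plan is to check that each generator of $\mathcal{H}_{\gn,p}$---that is, each $T_\gq$ for $\gq\nmid\gn$ and each $U_\gp$ for $\gp\mid p$---preserves $C$-cuspidality. Fix such a $T$ and write $\gq I_i=(\alpha_i)I_{j_i}$ as in (\ref{e.1.19}). That formula gives $(\mathcal{F}|T)^i = f^{j_i}|_{[\Gamma_{j_i}(\gn)\smallmatrixx{1}{0}{0}{\alpha_i}\Gamma_i(\gn)]} = \sum_{\nu} f^{j_i}|_{\delta_\nu}$, where $\coprod_\nu\Gamma_{j_i}(\gn)\delta_\nu$ is a right-coset decomposition of the double coset. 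Verifying that $(\mathcal{F}|T)^i$ is $C_i$-cuspidal amounts to checking, for every $s\in C_i$ and every $\nu$, that $f^{j_i}|_{\delta_\nu}$ quasi-vanishes at $s$. The quasi-vanishing of $f^{j_i}$ at a cusp is well-defined on $\Gamma_{j_i}(\gn)$-orbits, being a direct consequence of the automorphy condition (\ref{e2.4}), and $C_{j_i}$ is a union of $\Gamma_{j_i}(\gm)$-orbits, hence of $\Gamma_{j_i}(\gn)$-orbits. So, using that $f^{j_i}$ is $C_{j_i}$-cuspidal by hypothesis, the task reduces to the combinatorial claim $\delta_\nu\cdot s\in C_{j_i}$ for every such $s$ and $\nu$.

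To establish this claim I would take standard coset representatives, chosen upper triangular so that each $\delta_\nu$ fixes $\infty$. Writing $s=\gamma\cdot t$ with $\gamma\in\Gamma_i(\gm)$ and $t\in\{0,\infty\}$, the problem becomes a matrix calculation producing $\delta_\nu\gamma\cdot t$ as a cusp $b/d\in K\cup\{\infty\}$ whose denominator $d$ is either $0$ (so the cusp is $\infty\in C_{j_i}$) or has positive valuation at $\gq$ (respectively $\gp$) while remaining coprime to $\gm$. The hypothesis $(\gm,p)=1$ (respectively $(\gm,\gq)=1$ in the $T_\gq$ case) then allows one to solve a linear congruence over $\mathcal{O}_K$ exhibiting a matrix in $\Gamma_{j_i}(\gm)$ that sends $0$ or $\infty$ to $b/d$, placing $b/d$ in $C_{j_i}$.

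The principal obstacle is the class-group bookkeeping: the coset representative $\delta_\nu$, the ideal $I_{j_i}^{-1}$ in the definition of $\Gamma_{j_i}(\gm)$, and the ideal class of the denominator $d$ all interact through $\alpha_i$, and one must verify cleanly that $d$ does lie in $\gm I_{j_i}^{-1}$ up to a unit. Once this is in place the verification is the natural Bianchi analogue of the elliptic argument in \cite{bellaiche2015p}, using nothing beyond the $C_{j_i}$-cuspidality of each $f^{j_i}$.
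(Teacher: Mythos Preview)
Your reduction is right---it comes down to showing $\delta_\nu\cdot s\in C_{j_i}$---but the paper resolves what you call the ``principal obstacle'' without choosing coset representatives or doing any matrix arithmetic. It simply writes an arbitrary element of the double coset as $\gamma=\gamma_{j_i}\smallmatrixx{1}{0}{0}{\alpha_i}\gamma_i$ and observes that the problem factors into two trivial pieces (each $\Gamma_j(\gn)\subset\Gamma_j(\gm)$ stabilizes $C_j$) and one substantive one: the single matrix $\smallmatrixx{1}{0}{0}{\alpha_i}$ maps $C_i$ into $C_{j_i}$. This last fact follows by writing $\alpha_i=y_\gq y_i/t_{j_i}$ with $y_\gq\in\gq$, $y_i\in I_i$ both units mod~$\gm$ (using $(\gq,\gm)=(I_i,\gm)=1$) and $t_{j_i}\in I_{j_i}$; then $\smallmatrixx{1}{0}{0}{\alpha_i}\cdot\frac{x}{y}=\frac{t_{j_i}x}{y_\gq y_i y}$ is visibly in $C_{j_i}$. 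That disposes of all the class-group bookkeeping in one line, whereas your route via explicit upper-triangular representatives would require tracking the ideal class of each entry through the product $\delta_\nu\gamma$.

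One correction to your sketch: the intermediate condition you wrote (``$d$ has positive valuation at $\gq$ while remaining coprime to $\gm$'') is not the correct target. Membership in $C_{j_i}$ asks that the numerator lie in $I_{j_i}$ and that the denominator be either in $\gm$ or a unit mod~$\gm$; there is no $\gq$-valuation requirement, and your description drops the $d\in\gm$ branch altogether. This is a surface slip rather than a structural gap, but it is exactly the kind of confusion the paper's factorization argument avoids.
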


\begin{proof}
By (\ref{e.1.19}) we have to show that for all prime $\gq\nmid\gm$ with $\gq I_i =(\alpha_i)I_{j_i}$ the function $f^{j_i}|{\left[\Gamma_0^{j_i}(\gn)\smallmatrixx{1}{0}{0}{\alpha_i}\Gamma_0^i(\gn)\right]}$ is $C_{i}$-cuspidal. For this, take $s_i\in C_i$, $\sigma_{s_i}\in\mathrm{GL}_2(K)$ such that $\sigma_{s_i}\cdot\infty=s_i$ and $\gamma\in\Gamma_0^{j_i}(\gn)\smallmatrixx{1}{0}{0}{\alpha_i}\Gamma_0^i(\gn)$. We have to show that the constant term of  $(f^{j_i}|_{\gamma\sigma_{s_i}})_n$ vanishes for $1\leqslant n\leqslant k+\ell+1$.

Write $\gamma=\gamma_{j_i}\smallmatrixx{1}{0}{0}{\alpha_i}\gamma_i$ with $\gamma_j\in\Gamma_0^j(\gn)$, then using Lemma \ref{l: cuspides} we have $\gamma_i\cdot s_i=s_i'\in C_i$ by part i); $\smallmatrixx{1}{0}{0}{\alpha_i}\cdot s_i'=s_{j_i}\in C_{j_i}$ by part ii); and $\gamma_{j_i}\cdot s_{j_i}=s_{j_i}'\in C_{j_i}$ by part i). Then $\gamma\sigma_{s_i}\cdot\infty=\gamma_{j_i}\smallmatrixx{1}{0}{0}{\alpha_i}\gamma_i\sigma_{s_i}\cdot\infty=\gamma_{j_i}\smallmatrixx{1}{0}{0}{\alpha_i}\gamma_i\cdot s_i=\gamma_{j_i}\smallmatrixx{1}{0}{0}{\alpha_i}\cdot s_i'=\gamma_{j_i}\cdot s_{j_i}=s_{j_i}'\in C_{j_i}$.

The result now follows since $f^{j_i}$ is $C_{j_i}$-cuspidal and $\gamma\sigma_{s_i}\cdot\infty\in C_{j_i}$.
\end{proof}

\subsection{\texorpdfstring{$L$}{Lg}-function of \texorpdfstring{$C$}{Lg}-cuspidal Bianchi modular forms}{\label{L-C-cuspi}}

Henceforth, we will consider $\psi$ to be a Hecke character over $K$ of conductor $\gf$ with $(\gf,I_i)=1$ for each $i$. For each ideal $\mathfrak{a}=\prod_{\mathfrak{q}|\mathfrak{a}}\mathfrak{q}^{n_{\mathfrak{q}}}$ coprime to $\mathfrak{f}$, we define $\psi(\mathfrak{a})=\prod_{\mathfrak{q}|\mathfrak{a}}\psi_{\mathfrak{q}}(\varpi_{\mathfrak{q}})^{n_{\mathfrak{q}}}$ and $\psi(\mathfrak{a})=0$ otherwise. In an abuse of notation, we write $\psi$ for both the idelic Hecke character and the function it determines on ideals.

\begin{definition}
The $L$-function of a Bianchi modular form $\mathcal{F}$ twisted by $\psi$ is defined by
\begin{equation*}
L(\mathcal{F},\psi,s):=\sum_{\substack{0\neq \mathfrak{a}\subset\mathcal{O}_K\\ (\mathfrak{a},\gf)=1}} c(\mathfrak{a},\mathcal{F})\psi(\mathfrak{a})N(\mathfrak{a})^{-s} \;\;\;(s\in\mathbb{C}). 
\end{equation*}
\end{definition}

Let $f_1,..., f_h$ be the $h$ Bianchi modular forms corresponding to $\mathcal{F}$ with respect to $I_1,.., I_h$, then $L$-function of $\mathcal{F}$ can be written as
\begin{equation}{\label{sumLfunctions}}
L(\mathcal{F},\psi,s)= L^1(\mathcal{F},\psi,s)+\cdot\cdot\cdot+ L^h(\mathcal{F},\psi,s),  
\end{equation}
where 
\begin{equation*}
L^i(\mathcal{F},\psi,s)=L(f^i,\psi,s):=w_K^{-1}\sum_{\alpha\in K^\times} c(\alpha\delta I_i,\mathcal{F})\psi(\alpha\delta I_i)N(\alpha\delta I_i)^{-s}  \end{equation*}
with $w_K=|\mathcal{O}_K^\times|$.

\begin{remark}
In \cite[Chap. II]{weil1971dirichlet} it is proved that the $L$-functions of $\mathcal{F}$ and each $f^i$ converge absolutely on some right-half plane. 
\end{remark}

Note that for a complex number $s$, we have $L(\mathcal{F},\psi,s)=L(\mathcal{F},\psi|\cdot|_{\A_K}^{s-1},1)$ which allows us following \cite[\S 2.6]{chris2017}, consider the $L$-function of a $C$-cuspidal Bianchi modular form as a function on Hecke characters $\psi$ by setting 
$$L(\mathcal{F},\psi):=L(\mathcal{F},\psi,1).$$

We now complete the $L$-function by adding Deligne's $\Gamma$-factors at infinity. If $\psi$ has infinity type $(u,v)\in\C^2$ we define the  \textit{renormalized} $L$-function of $\mathcal{F}$ by:
\begin{equation}{\label{gammafactors}}
\Lambda(\mathcal{F},\psi):=\frac{\Gamma(u+1)\Gamma(v+1)}{(2\pi i)^{u+1}(2\pi i)^{v+1}}L(\mathcal{F},\psi). 
\end{equation}

The $L$-function of $C$-cuspidal Bianchi modular forms can be written as a finite sum of Mellin transforms similarly as in \cite[\S 2.6]{chris2017}. Before stating the integral expression of the $L$-function, we recall some facts about Gauss sums and define suitable coefficients to link $\Lambda(\mathcal{F},\psi)$ with partial modular symbols in Section \ref{partialsymbolsattached}.

Let $\psi$ be a Hecke character of $K$ with conductor $\mathfrak{f}$, then we define the Gauss sum of $\psi$ to be
\begin{equation*}
\tau(\psi)=\psi_\infty(\delta)\sum_{\substack{[a]\in\mathfrak{f}^{-1}/\mathcal{O}_K \\ ((a)\gf,\gf)=1}}\psi_\gf(a) e^{2\pi i \mathrm{Tr}_{K/\mathbb{Q}}(a/\delta)}.
\end{equation*}

Note that our Gauss sum $\tau$ is related with the Gauss sum $\tilde{\tau}$ defined in \cite[\S2.6]{chris2017}. More precisely, by denoting $x_\gf$ the idele associated to the ideal $\gf$ as in \cite[\S2.6]{chris2017} we have that $\psi(a\gf)^{-1}\psi_\infty(a)^{-1}=\psi(x_\gf)^{-1}\psi_\gf(ax_\gf)$ for $a\in\gf^{-1}/\roi_K$ with $(a\gf,\gf)=1$, then we deduce that
$$\tau(\psi)=\psi(x_\gf)\psi_\gf(x_\gf)^{-1}\tilde{\tau}(\psi^{-1}).$$

\begin{remark}{\label{Gauss}}

i) In the same way as in \cite[Prop 1.7]{chris2017}, for all $c\in\mathcal{O}_K$, we have 
\begin{equation*}
\frac{\psi_\infty(\delta)}{\tau(\psi)}\sum_{\substack{[a]\in\mathfrak{f}^{-1}/\mathcal{O}_K \\((a)\mathfrak{f},\mathfrak{f})=1}}\psi_\gf(a) e^{2\pi i \mathrm{Tr}_{K/\mathbb{Q}}(ac/\delta)}=\begin{cases}
\psi_\gf(c)^{-1}&: ((c)\gf,\gf)=1,\\
0 &:\rm{otherwise}.
\end{cases}    
\end{equation*}

ii) Suppose $\gf$ coprime to $\gm$ and let $i\in\{1,...,h\}$, then we can choose a representative $a$ for each $[a]\in\gf^{-1}/\mathcal{O}_K$, such that $a\in C_i$: let $I_{k_i}$ be the unique ideal such that $\gf I_{k_i} =(\alpha_{k_i})I_i$ for $\alpha_{k_i}\in K$, then we put $a=d_b/\alpha_{k_i}$ where $d_b \in I_{k_i}$ and $d_b\equiv b$ (mod $\gf$). Note that $\alpha_{k_i}^{-1}\in\gf^{-1}I_{k_i}^{-1}I_i\subset \gf^{-1}I_{k_i}^{-1}$, so in particular, as $b$ ranges over all classes of $(\mathcal{O}_K/\gf)^\times$ and as $d_b \in I_{k_i}$, we see that $d_b/\alpha_i$ ranges over a full set of coset representatives $[a]$ for $\gf^{-1}/\mathcal{O}_K$ with
$(a)\gf$ coprime to $\gf$. Similarly to (\ref{equat1.20}), we have that
$\alpha_{k_i}=y_\gf y_{k_i}/t_i$ with
$y_\gf\in(\mathcal{O}_K/\gm)^\times$, $y_{k_i}\in(\mathcal{O}_K/\gm)^\times$ and $t_i\in I_i$. Since $y_\gf y_{k_i} \in(\mathcal{O}_K/\gm)^\times$, then we obtain
$a=d_b/\alpha_{k_i}=d_b t_i/y_\gf y_{k_i}\in C_i$.
\end{remark}

\begin{definition}{\label{factor}}
Let $\mathcal{F}$ be a $C$-cuspidal form of weight $(k,\ell)$, for each $1 \leqslant i \leqslant h$, $0\leqslant q \leqslant k, 0\leqslant r \leqslant \ell$ and $a\in C_i$ we define
$$c_{q,r}^{i}(a)=2\binom{k+\ell+2}{\ell+q-r+1}^{-1} (-1)^{\ell-r+1}\int_{0}^{\infty}t^{q+r}f_{\ell+q-r+1}^{i}(a,t)dt.$$
\end{definition}

\begin{remark}
The integral in the above definition converges since $1\leqslant \ell+q-r+1 \leqslant k+\ell+1$ and $f^i$ is $C_i$-cuspidal. Also, note that $c_{q,r}^{i}$ is not symmetric in $k$ and $\ell$, this comes from part i) of Remark \ref{Remark1.8}. 
\end{remark}

\begin{proposition}\label{T4.6}
Let $\mathcal{F}\in\mathcal{M}_{(k,\ell)}(K_0(\gn))$ be a $C$-cuspidal Bianchi modular form with $\gn=(p)\gm$ and $\gm$ coprime to $(p)$, then for a Hecke character $\psi$ of $K$ of conductor $\gf$ coprime to $\gm$ and infinity type $(q,r)$ satisfying $0\leqslant q \leqslant k, 0\leqslant r \leqslant \ell$, we have
\begin{equation*}
\Lambda(\mathcal{F},\psi)=\frac{(-1)^{\ell+q+r}2}{D_Kw_K\tau(\psi)}\left[\sum_{j=1}^h\psi(t_j)\sum_{\substack{[a]\in\gf^{-1}/\mathcal{O}_K \\ ((a)\gf,\gf)=1,\;a\in C_j}}\psi_\gf(a) c_{q,r}^{j}(a)\right].
\end{equation*}
\end{proposition}

\begin{proof}
Let $f^1,...,f^h$ be the Bianchi modular forms corresponding to $\mathcal{F}$. Expanding the $L$-function of $f^j$ we have
\begin{equation}\label{primeraexpre}
L(f^j,\psi,1)=\frac{\psi(t_j)|t_j|_f}{w_K}\sum_{\substack{\alpha\in K^\times\\ (\alpha\delta I_j,\gf)=1}} c(\alpha\delta I_j,\mathcal{F})\psi_\infty(\alpha\delta)^{-1}\psi_\gf(\alpha\delta)^{-1}|\alpha\delta|^{-2}.
\end{equation}

We now proceed to rewrite $\psi_\infty(\delta)^{-1}\psi_\gf(\alpha\delta)^{-1}|\delta|^{-2}$ in terms of a Gauss sum and $\psi_\infty(\alpha)^{-1}|\alpha|^{-2}$ as an integral.

By parts i) and ii) of Remark \ref{Gauss} we have
\begin{equation}\label{segundaexpre}
\psi_\infty(\delta)^{-1}\psi_\gf(\alpha\delta)^{-1}|\delta|^{-2}=\frac{1}{D_K\tau(\psi)}\sum_{\substack{[a]\in\gf^{-1}/\mathcal{O}_K \\ ((a)\gf,\gf)=1,\;a\in C_j}}\psi_\gf(a)e^{2\pi i \mathrm{Tr}_{K/\mathbb{Q}}(a\alpha)}.    
\end{equation}

Likewise, we obtain 
\begin{align}\label{terceraexpre}
\psi_\infty(\alpha)^{-1}|\alpha|^{-2}&=\left(\frac{\alpha}{|\alpha|}\right)^{-q+r}|\alpha|^{-q-r-2}    \\
&=\left(\frac{\alpha}{|\alpha|}\right)^{-q+r} \frac{4(2\pi)^{q+r+2}}{\Gamma(q+1)\Gamma(r+1)}\int_0^\infty t^{q+r+1}K_{q-r}(4\pi|\alpha|t)dt, \nonumber
\end{align}
where the last integral comes by setting $\lambda=4\pi|\alpha|$, $a=q+r+2$ and $b=q-r$ in the standard integral (see \cite[\S 7]{hida1994critical}) 
$$\int_0^\infty t^{a-1}K_b(\lambda t)dt=\lambda^{-a}2^{a-2}\Gamma\left(\frac{a+b}{2}\right)\Gamma\left(\frac{a-b}{2}\right).$$

Substituting (\ref{segundaexpre}) and (\ref{terceraexpre}) in (\ref{primeraexpre}) and rearranging, we obtain the Fourier expansion of $f_{\ell+q-r+1}^j$, more precisely we have 
\begin{equation}\label{secondexpre}
L(f^j,\psi,1)=A(j,\psi)\left[\sum_{\substack{[a]\in\gf^{-1}/\mathcal{O}_K \\ ((a)\gf,\gf)=1,\;a\in C_j}}\psi_\gf(a)\int_{0}^{\infty}t^{q+r}f_{\ell+q-r+1}^j(a,t)dt\right],
\end{equation}
where 
\begin{equation*}
A(j,\psi)=\frac{\psi(t_j)(-1)^{q+1} 4(2\pi i)^{q+r+2}\binom{k+\ell+2}{\ell+q-r+1}^{-1}}{D_Kw_K\Gamma(q+1)\Gamma(r+1)\tau(\psi)},   
\end{equation*}
and the integral converges by the $C$-cuspidality of $\mathcal{F}$, since the conditions on the infinity type $(q,r)$ of $\psi$ implies $1\leqslant \ell+q-r+1 \leqslant k+\ell+1$.

The result follows by recalling that $\Lambda(\mathcal{F},\psi)=\frac{\Gamma(q+1)\Gamma(r+1)}{(2\pi i)^{q+r+2}}\sum_{j=1}^hL(f^j,\psi,1)$.
\end{proof}

We finish this section with some remarks about algebraicity of $L$-values of $C$-cuspidal Bianchi modular forms.

For cuspidal Bianchi modular forms, the ``critical'' values of this $L$-function can be controlled. We have the following theorem (see \cite[Thm 8.1]{hida1994critical}): 

\begin{theorem}
Let $\mathcal{F}$ be a cuspidal Bianchi eigenform of weight $(k,k)$, there exists a period $\Omega_\mathcal{F}\in\mathbb{C}^\times$ and a number field $E$ such that, if $\psi$ is a Hecke character of infinity type $(q,r)$ satisfying $0\leqslant q \leqslant k, 0\leqslant r \leqslant k$, we have
\begin{equation*}
\frac{\Lambda(\mathcal{F},\psi)}{\Omega_\mathcal{F}}\in E(\psi),  
\end{equation*}
where $E(\psi)\subset\overline{\mathbb{Q}}$ is the extension of $E$ generated by the values of $\psi_f$.
\end{theorem}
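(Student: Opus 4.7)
The plan is to transfer the algebraicity question for $\Lambda(\mathcal{F},\psi)$ into a rationality statement at the level of cuspidal Bianchi modular symbols attached to $\mathcal{F}$. The key input is an Eichler--Shimura type isomorphism identifying the $\mathcal{F}$-eigenspace in $S_{(k,k)}(K_0(\gn),\varphi)$ with a Hecke eigenspace in the (compactly supported) cuspidal cohomology of the Bianchi 3-fold $Y_0(\gn)$, with coefficients in the appropriate polynomial local system coming from $V_{2k+2}$. Via this isomorphism, $\mathcal{F}$ determines a complex cuspidal Bianchi modular symbol $\phi_\mathcal{F}$.

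First I would exploit that this cuspidal cohomology carries a natural $\overline{\mathbb{Q}}$-structure. The Hecke algebra $\mathcal{H}_{\gn,p}$ acts by endomorphisms defined over $\overline{\mathbb{Q}}$, so the $\mathcal{F}$-isotypic component descends to a finite-dimensional $E$-subspace, where $E\subset\overline{\mathbb{Q}}$ is the number field generated by the Hecke eigenvalues of $\mathcal{F}$. Fixing a generator $\phi'_\mathcal{F}$ of an eigen-line inside this $E$-rational subspace, one defines the period $\Omega_\mathcal{F}\in\mathbb{C}^\times$ by the relation $\phi_\mathcal{F}=\Omega_\mathcal{F}\cdot\phi'_\mathcal{F}$; this is intrinsic to $\mathcal{F}$ up to a factor in $E^\times$, which is harmless for the algebraicity statement.

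Next I would invoke an integral representation for $\Lambda(\mathcal{F},\psi)$ in the spirit of Proposition \ref{T4.6}: since cuspidal forms are in particular $C$-cuspidal (Remark \ref{rem1.11}), that formula applies and expresses $\Lambda(\mathcal{F},\psi)$ as a finite $E(\psi)$-linear combination --- with coefficients built from values of $\psi_\gf$, the Gauss-type sum $W(\psi)$, and the scalars $\psi(t_i)$ --- of the quantities $c^{i}_{q,r}(a)$. The crucial claim is that each $c^{i}_{q,r}(a)$ is the evaluation of $\phi_\mathcal{F}$ on an $E(\psi)$-valued algebraic cycle determined by $(q,r)$ and the cusp $a\in C_i\subset\mathbb{P}^1(K)$. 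Substituting $\phi_\mathcal{F}=\Omega_\mathcal{F}\phi'_\mathcal{F}$ then gives
\begin{equation*}
\Lambda(\mathcal{F},\psi) \;=\; \Omega_\mathcal{F}\cdot\alpha, \qquad \alpha\in E(\psi),
\end{equation*}
which is precisely the required statement.

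The main obstacle lies in this last identification: matching the analytic coefficients $c^{i}_{q,r}(a)$, defined by an integral against the Whittaker expansion, with algebraic pairings of a $\overline{\mathbb{Q}}$-rational cohomology class against $E(\psi)$-valued cycles. This is the Bianchi analogue of Birch's lemma and is where the bulk of the argument sits; concretely it amounts to rewriting $c^{i}_{q,r}(a)$ as an integral of a harmonic differential over a path in $\mathcal{H}_3$ joining cusps, then recognizing this as a modular symbol value. A secondary subtlety is controlling the dimension of the $\mathcal{F}$-isotypic component: for a newform, strong multiplicity one combined with the decomposition under complex conjugation on cohomology produces a one-dimensional eigen-line (up to these signs), so that $\Omega_\mathcal{F}$ is well-defined up to $E^\times$ and the statement is meaningful.
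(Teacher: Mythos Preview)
The paper does not prove this statement; it is quoted from \cite[Thm 8.1]{hida1994critical} and used as a black box. Your sketch is in fact an outline of Hida's argument in that reference, and the overall architecture is correct: attach a modular symbol $\phi_\mathcal{F}$ via an Eichler--Shimura isomorphism, exploit the $\overline{\mathbb{Q}}$-structure on cuspidal cohomology to choose an algebraic generator $\phi'_\mathcal{F}$ of the eigenline, define $\Omega_\mathcal{F}$ by $\phi_\mathcal{F}=\Omega_\mathcal{F}\phi'_\mathcal{F}$, and then feed the integral formula (which, as you note, is exactly what Remark \ref{Remarkevaluarpartial} records: $c^i_{q,r}(a)$ is a value of $\phi_{f^i}$ on an explicit polynomial with coefficients in $K$) back through this decomposition.

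There is one genuine error in your last paragraph. You invoke ``the decomposition under complex conjugation on cohomology'' to cut the $\mathcal{F}$-isotypic component down to a line. That is the $\mathrm{GL}_2/\mathbb{Q}$ picture, where complex conjugation on the upper half-plane induces an involution splitting cohomology into $\pm$-parts. Over an imaginary quadratic field there is no real place, hence no such involution on $\mathcal{H}_3$ or on the cohomology of $Y_0(\gn)$. Fortunately you do not need it: for a cuspidal Bianchi newform the $(\mathfrak{g},K_\infty)$-cohomology of the archimedean component in degree $1$ is already one-dimensional, so strong multiplicity one alone gives a one-dimensional Hecke eigenspace in $H^1_{\mathrm{cusp}}$, and $\Omega_\mathcal{F}$ is well-defined up to $E^\times$ without any sign decomposition. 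Remove the reference to complex conjugation and the argument stands.
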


\begin{remark}{\label{isomorphism}}
In the non-cuspidal case, depending on the Bianchi modular forms we are interested, we can prove algebraicity of critical $L$-values (see Proposition \ref{period base change} for an example). For the construction of the $p$-adic $L$-function, we are interested in view $p$-adically the critical $L$-values. For this, in Section \ref{S4} we will use an isomorphism between $\C$ and $\overline{\Q}_p$.   
\end{remark}

\section{Partial Bianchi modular symbols}{\label{S3}}

In this section, we introduce partial Bianchi modular symbols. These are algebraic analogues of $C$-cuspidal Bianchi modular forms that are easier to study $p$-adically. In Section \ref{partialsymbolsattached}, we attach partial symbols to $C$-cuspidal forms and link them with $L$-values.

\subsection{Partial modular symbols}

Let $\Gamma$ be a discrete subgroup of $\mathrm{SL}_2(K)$ and let $\mathcal{C}$ be a non-empty $\Gamma$-invariant subset of $\mathbb{P}^1(K)$.

We denote by $\Delta_{\mathcal{C}}$ the abelian group of divisors on $\mathcal{C}$, i.e.,
\begin{equation*}
\Delta_{\mathcal{C}}=\Z[\mathcal{C}]=\left\{ \sum_{c\in \mathcal{C}}n_c\{c\}: n_c\in\mathbb{Z}, \; n_c=0 \;\text{for}\;\text{almost}\;\text{all}\;c \right\}
\end{equation*}
and by $\Delta_{\mathcal{C}}^0$ the subgroup of divisors of degree 0 (i.e., such that $\sum_{c\in\mathcal{C}}n_c=0$). Note that $\Delta_{\mathcal{C}}^0$ has a left action by the group $\Gamma$ by fractional linear transformations on $\mathcal{C}$. 

Let $V$ be a right $\Gamma$-module, we provide the space $\mathrm{Hom}(\Delta_{\mathcal{C}}^0,V)$ with a right $\Gamma$-action by setting 
\begin{equation*}
\phi_{|\gamma}(D)=\phi(\gamma \cdot D)_{|\gamma}. 
\end{equation*}
\begin{definition}{\label{abstract partial}}
We define the space of partial modular symbols on $\mathcal{C}$ for $\Gamma$ with values in $V$, to be the space $\mathrm{Symb}_{\Gamma,\mathcal{C}}(V):= \mathrm{Hom}_\Gamma(\Delta_{\mathcal{C}}^0,V)$ of $\Gamma$-invariant maps from $\Delta_{\mathcal{C}}^0$ to $V$. 
\end{definition}

\begin{remark}
When $\mathcal{C}=\mathbb{P}^1(K)$ we drop $\mathcal{C}$ from the notation and call $\mathrm{Symb}_{\Gamma}(V)$ the space of \textit{modular symbols} for $\Gamma$ with values in $V$ recovering \cite[Def 2.3]{chris2017}.
\end{remark}

Recall from Section \ref{background} the group $K_0(\gn)$ and its twist $\Gamma_0^i(\gn)$ for each $i=1,...,h$. Setting $\Gamma=\Gamma_0^i(\gn)$ in Definition \ref{abstract partial} and taking suitable modules $V$ to be defined below, we can obtain more concrete partial modular symbols.

For a commutative ring $R$ recall that $V_k(R)$ denotes the space of homogeneous polynomials over $R$ in two variables of degree $k$. Furthermore, for integers $k,\ell\geqslant0$ we define $V_{k,\ell}(R):=V_k(R)\otimes_R V_{\ell}(R)$.

We identify $V_{k,\ell}(R)$ with the space of polynomials that are homogeneous of degree $k$ in two variables $X,Y$ and homogeneous of degree $\ell$ in two further variables $\overline{X}, \overline{Y}$.

\begin{definition}
Let $R$ be a $K$-algebra, we have a left $\Gamma_0^i(\gn)$-action on $V_k(R)$ defined by 
$\gamma\cdot P\binom{X}{Y}= P\binom{dX+bY}{cX+aY}$, for $\gamma=\smallmatrixx{a}{b}{c}{d}$. We obtain a left $\Gamma_0^i(\gn)$-action on $V_{k,\ell}(R)$ given by
\begin{equation*}
\gamma\cdot P\left[\matrixxx{X}{Y},\matrixxx{\overline{X}}{\overline{Y}}\right]=P\left[\matrixxx{dX+bY}{cX+aY},\matrixxx{\overline{d}\overline{X}+\overline{b}\overline{Y}}{\overline{c}\overline{X}+\overline{a}\overline{Y}}\right]. \end{equation*}
This induces a right $\Gamma_0^i(\gn)$-action on the dual space $V_{k,\ell}^*(R)$ by setting 
\begin{equation*}
\mu|_\gamma(P)=\mu(\gamma\cdot P).    
\end{equation*}
\end{definition}

Let $\mathcal{C}=(\mathcal{C}_1,...,\mathcal{C}_h)$ with $\mathcal{C}_i$ a non-empty $\Gamma_0^i(\gn)$-invariant subset of $\mathbb{P}^1(K)$.

\begin{definition}\label{Def5.2}
i) Define the space of \textit{partial Bianchi modular symbols} on $\mathcal{C}_i$ of weight $(k,\ell)$ and level $\Gamma_0^i(\gn)$ to be the space $\mathrm{Symb}_{\Gamma_0^i(\gn),\mathcal{C}_i}(V_{k,\ell}^*(\mathbb{C}))$.\\
ii) Define the space of \textit{partial Bianchi modular symbols} on $\mathcal{C}$ of weight $(k,\ell)$ and level $K_0(\gn)$ to be the space
\begin{equation*}
\mathrm{Symb}_{K_0(\gn),\mathcal{C}}(V_{k,\ell}^*(\mathbb{C})):=\bigoplus_{i=1}^{h} \mathrm{Symb}_{\Gamma_0^i(\gn),\mathcal{C}_i}(V_{k,\ell}^*(\mathbb{C})).
\end{equation*}
\end{definition}

\begin{remark}{\label{fullBianchi}}
When $\mathcal{C}_i=\mathbb{P}^1(K)$ for all $i$, we drop $\mathcal{C}$ from the notation and recover the space $\mathrm{Symb}_{K_0(\gn)}(V_{k,\ell}^*(\mathbb{C}))$ of Bianchi modular symbols in \cite[Def 2.4]{chris2017}.
\end{remark}

\subsection{Partial modular symbols and \texorpdfstring{$C$}{Lg}-cuspidal forms}{\label{partialsymbolsattached}}

Recall the ideal $\gn=(p)\gm$ with $\gm$ coprime to $(p)$. To relate partial modular symbols with $C$-cuspidal Bianchi modular forms we henceforth take $\mathcal{C}=C=(C_1,...,C_h)$ with $C_i=\Gamma_0^i(\gm)\infty\cup\Gamma_0^i(\gm)0$ as in Section \ref{Fourier}, and consider the space of partial Bianchi modular symbols on $C$ of weight $(k,\ell)$ and level $K_0(\gn)$.

We can define Hecke operators on the space of Bianchi modular symbols similarly as were defined on Bianchi modular forms in Section \ref{Fourier}. 

\begin{definition}{\label{definitionoperator}}
Let $\gq$ be a prime ideal, then the Hecke operator $T_\gq$ is defined on the space of Bianchi modular symbols $\mathrm{Symb}_{K_0(\gn)}(V_{k,\ell}^*(\mathbb{C}))$ by
\begin{equation*}
(\phi_1,...,\phi_h)|_{T_\gq}=\left(\phi_{j_1}\bigg|\left[\Gamma_0^{j_1}(\gn)\smallmatrixx{1}{0}{0}{\alpha_1}\Gamma_0^1(\gn)\right],...,\phi_{j_h}\bigg|\left[\Gamma_0^{j_h}(\gn)\smallmatrixx{1}{0}{0}{\alpha_h}\Gamma_0^h(\gn)\right]\right).
\end{equation*}
If $\gq|\gn$ we denote the Hecke operator by $U_\gq$. 
\end{definition}

Analogously with $C$-cuspidal Bianchi modular forms, not all Hecke operators act on $\mathrm{Symb}_{K_0(\gn),C}(V_{k,\ell}^*(\mathbb{C}))$.

\begin{lemma}{\label{actionHeckeoperator}}
The Hecke algebra $\mathcal{H}_{\gn,p}$ acts on $\mathrm{Symb}_{K_0(\gn),C}(V_{k,\ell}^*(\mathbb{C}))$.
\end{lemma}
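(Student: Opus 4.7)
The approach mirrors the proof of Proposition \ref{proposition1.3}, which established the same stability statement for $C$-cuspidal Bianchi modular forms; partial Bianchi modular symbols on $C$ are the symbol-theoretic analogues, and the relevant combinatorics of the cusp sets $C_i$ is identical. So the plan is to extract, from the proof of Proposition \ref{proposition1.3}, the two facts that $\Gamma_i(\gn)$ stabilizes $C_i$ and that $\smallmatrixx{1}{0}{0}{\alpha_i}$ sends $C_i$ into $C_{j_i}$, and use them to check that the double-coset action preserves the property of being defined on $\Delta_{C_i}^0$.

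More precisely, by Definition \ref{definitionoperator} it suffices to show that for each prime $\gq\nmid\gm$ and each $i\in\{1,\ldots,h\}$, if $\phi_{j_i}\in\mathrm{Symb}_{\Gamma_{j_i}(\gn),C_{j_i}}(V_{k,\ell}^*(\C))$ and $\gq I_i=(\alpha_i)I_{j_i}$, then the image
\begin{equation*}
\phi_{j_i}\bigg|\left[\Gamma_{j_i}(\gn)\smallmatrixx{1}{0}{0}{\alpha_i}\Gamma_i(\gn)\right]
\end{equation*}
lies in $\mathrm{Symb}_{\Gamma_i(\gn),C_i}(V_{k,\ell}^*(\C))$. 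Decomposing the double coset as a disjoint union of right cosets $\Gamma_{j_i}(\gn)\smallmatrixx{1}{0}{0}{\alpha_i}\Gamma_i(\gn)=\bigsqcup_h \Gamma_{j_i}(\gn)\delta_h$ with $\delta_h=\smallmatrixx{1}{0}{0}{\alpha_i}\gamma_{i,h}$ and $\gamma_{i,h}\in\Gamma_i(\gn)$, the action is given by the usual formula $D\mapsto \sum_h \phi_{j_i}(\delta_h\cdot D)|_{\delta_h}$. Since each $\delta_h$ is $\mathrm{SL}_2(K)$-linear on divisors, degree zero is preserved, so the only substantive point is to verify $\delta_h\cdot C_i\subset C_{j_i}$.

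This last inclusion is precisely where the two facts from the proof of Proposition \ref{proposition1.3} come in: $\gamma_{i,h}\in\Gamma_i(\gn)\subset\Gamma_i(\gm)$ stabilizes $C_i$, and then $\smallmatrixx{1}{0}{0}{\alpha_i}$ maps $C_i$ into $C_{j_i}$ via the identity $\alpha_i=y_\gq y_i/t_{j_i}$ of equation \eqref{equat1.20}. Composing these gives $\delta_h\cdot c_i\in C_{j_i}$ for every $c_i\in C_i$, as required. Finally, $\Gamma_i(\gn)$-invariance of the resulting symbol is automatic from the double-coset construction. I do not anticipate any real obstacle: the argument is a direct symbol-level transcription of Proposition \ref{proposition1.3}, and the one thing to be careful about is just book-keeping of the degree-zero condition and the coset representatives.
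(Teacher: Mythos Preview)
Your proposal is correct and follows essentially the same approach as the paper's proof: both reduce the statement to showing that every element of the double coset $\Gamma_{j_i}(\gn)\smallmatrixx{1}{0}{0}{\alpha_i}\Gamma_i(\gn)$ maps $C_i$ into $C_{j_i}$, and both deduce this from facts 1) and 2) in the proof of Proposition~\ref{proposition1.3}. Your explicit choice of right-coset representatives $\delta_h=\smallmatrixx{1}{0}{0}{\alpha_i}\gamma_{i,h}$ is a slight streamlining (the left factor $\gamma_{j_i}$ is absorbed by the $\Gamma_{j_i}(\gn)$-invariance of $\phi_{j_i}$), but otherwise the arguments are the same.
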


\begin{proof}
Let $(\phi_1,...,\phi_h)\in \mathrm{Symb}_{K_0(\gn),C}(V_{k,\ell}^*(\mathbb{C}))$, we have to show that for all prime $\gq\nmid\gm$ with $\gq I_i =(\alpha_i)I_{j_i}$ and for each $i=1,...,h$ then
\begin{equation*}
\phi_{j_i}\big|\left[\Gamma_0^{j_i}(\gn)\smallmatrixx{1}{0}{0}{\alpha_i}\Gamma_0^i(\gn)\right]\in \mathrm{Symb}_{\Gamma_0^i(\gn),C_i}(V_{k,\ell}^*(\mathbb{C})).    
\end{equation*}

It suffices to prove $\gamma\cdot s_i\in C_{j_i}$ for all $\gamma=\gamma_{j_i}\smallmatrixx{1}{0}{0}{\alpha_i}\gamma_i \in \left[\Gamma_0^{j_i}(\gn)\smallmatrixx{1}{0}{0}{\alpha_i}\Gamma_0^i(\gn)\right]$ and $s_i\in C_i$.

Using Lemma \ref{l: cuspides}, for all $s_i\in C_{i}$ we have $\gamma_i\cdot s_i=s_i'\in C_i$ by part i), $\smallmatrixx{1}{0}{0}{\alpha_i}\cdot s_i'=s_{j_i}\in C_{j_i}$ by part ii) and $\gamma_{j_i}\cdot s_{j_i}=s_{j_i}'\in C_{j_i}$ again by part i).
\end{proof}

Let $\mathcal{F}$ be a $C$-cuspidal Bianchi eigenform of weight $(k,\ell)$ and level $K_0(\gn)$. For $1\leqslant i \leqslant h$, recall the factor $c_{q,r}^{i}$ from Definition \ref{factor}. Let $\mathcal{X}^{k-q}\mathcal{Y}^q\overline{\mathcal{X}}^{\ell-r}\overline{\mathcal{Y}}^r$ be the element of the dual basis of $V_{k,\ell}^*(\mathbb{C})$ defined by 
\begin{equation*}
\mathcal{X}^{k-q}\mathcal{Y}^q\overline{\mathcal{X}}^{\ell-r}\overline{\mathcal{Y}}^r(X^{k-i}Y^i\overline{X}^{\ell-j}\overline{Y}^j)=
\begin{cases}
1 & : q=i\;\;and\;\;r=j, \\
0 & : otherwise.
\end{cases}
\end{equation*}

\begin{definition}{\label{pbmsattach}}
For each descent $f^i$ of $\mathcal{F}$, we define 
\begin{equation*}
\phi_{f^{i}}\in\mathrm{Hom}(\Delta_{C_i}^0,V_{k,\ell}^*(\mathbb{C}))
\end{equation*}
by setting
\begin{equation}
\phi_{f^{i}}(\{a\}-\{\infty\})= \sum_{q=0}^{k}\sum_{r=0}^\ell c_{q,r}^{i}(a)(\mathcal{Y}-a\mathcal{X})^{k-q}\mathcal{X}^q(\overline{\mathcal{Y}}-\overline{a}\overline{\mathcal{X}})^{\ell-r}\overline{\mathcal{X}}^r.
\end{equation}  
for each $a\in C_i$.
\end{definition}

\begin{proposition}{\label{attach partial}}
We have $\phi_{\mathcal{F}}:=(\phi_{f^{1}},...,\phi_{f^{h}})\in \mathrm{Symb}_{K_0(\gn),C}(V_{k,\ell}^*(\mathbb{C}))$. Moreover, the map $\mathcal{F}\rightarrow\phi_{\mathcal{F}}$ is $\mathcal{H}_{\gn,p}$-equivariant.
\end{proposition}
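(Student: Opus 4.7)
The plan is to establish three claims in turn: (a) the formula of Definition \ref{pbmsattach} extends to a well-defined homomorphism $\phi_{f^i}:\Delta_{C_i}^0\to V_{k,\ell}^*(\mathbb{C})$; (b) each $\phi_{f^i}$ is $\Gamma_i(\gn)$-invariant; and (c) the assignment $\mathcal{F}\mapsto\phi_{\mathcal{F}}$ commutes with the Hecke algebra $\mathcal{H}_{\gn,p}$. Claim (a) will be immediate: since $\infty\in C_i$, the set $\{\{a\}-\{\infty\}:a\in C_i,\,a\neq\infty\}$ is a $\mathbb{Z}$-basis of $\Delta_{C_i}^0$, so linear extension is unambiguous.

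The substantive content is (b). My strategy, adapting \cite[Thm.~2.8]{chris2017}, will be to reinterpret $\phi_{f^i}(\{a\}-\{\infty\})$ as the line integral of a certain closed $V_{k,\ell}^*(\mathbb{C})$-valued $1$-form $\omega_{f^i}$ on $\mathcal{H}_3$ attached to $f^i$, taken along the vertical geodesic joining the cusps $(a,0)$ and $(a,\infty)$. Concretely, I will pair $\omega_{f^i}$ with the dual basis element corresponding to $X^{k-s}Y^s\overline{X}^{\ell-u}\overline{Y}^u$ and check, via the Fourier expansion (\ref{e.3.2}) and standard Bessel-integral identities, that the resulting scalar is exactly $c_{s,u}^i(a)$ from Definition \ref{factor}. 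The shifted polynomial factor $(\mathcal{Y}-a\mathcal{X})^{k-s}\mathcal{X}^s(\overline{\mathcal{Y}}-\overline{a}\overline{\mathcal{X}})^{\ell-u}\overline{\mathcal{X}}^u$ appearing in Definition \ref{pbmsattach} is precisely the translate at $a$ of the constant-polynomial dual basis, which is natural from this integral viewpoint.

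The crucial convergence check is that as $(s,u)$ sweeps $\{0,\ldots,k\}\times\{0,\ldots,\ell\}$, the index $n=\ell+s-u+1$ of $f_n^i$ ranges over $\{1,\ldots,k+\ell+1\}$; this is exactly the range where $C_i$-cuspidality (Definition \ref{Def 2.9}) forces vanishing of the constant Fourier term at both cusps $\infty$ and $0$ of the vertical geodesic, so the defining integrals converge as $t\to 0$ and $t\to\infty$. This is precisely why quasi-vanishing rather than full vanishing was built into Definition \ref{Def 2.9}. Once the integral representation is in place, $\Gamma_i(\gn)$-invariance will follow formally from the automorphy relation (\ref{e2.4}), which translates into the combined pullback-and-coefficient invariance $(\gamma^*\omega_{f^i})|_\gamma=\omega_{f^i}$ for $\gamma\in\Gamma_i(\gn)$. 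Since $\Gamma_i(\gn)\subset\Gamma_i(\gm)$ stabilizes $C_i$, both $\gamma a$ and $\gamma\infty$ remain in $C_i$, and the change of variables yields
\begin{equation*}
\phi_{f^i}|_\gamma(\{a\}-\{\infty\})=\Big(\int_{\gamma\infty}^{\gamma a}\omega_{f^i}\Big)\Big|_\gamma=\int_\infty^a(\gamma^*\omega_{f^i})\big|_\gamma=\int_\infty^a\omega_{f^i}=\phi_{f^i}(\{a\}-\{\infty\}).
\end{equation*}

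For (c), I will compare (\ref{e.1.19}) with Definition \ref{definitionoperator}: both sides act componentwise through the same double cosets $[\Gamma_{j_i}(\gn)\smallmatrixx{1}{0}{0}{\alpha_i}\Gamma_i(\gn)]$, and the assignment $f\mapsto\phi_f$ intertwines the right slash-action of $\mathrm{GL}_2(K)$ on forms and on symbols via the same change of variables in the integral. That the resulting tuple again lies in $\mathrm{Symb}_{K_0(\gn),C}(V_{k,\ell}^*(\mathbb{C}))$ is exactly Lemma \ref{actionHeckeoperator}, mirroring Proposition \ref{proposition1.3}. The main obstacle will be the matching in step (b): computing the pairing of $\omega_{f^i}$ against each dual basis vector along the vertical geodesic and verifying term-by-term equality with $c_{s,u}^i(a)$. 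Once this identification is established --- a direct if delicate substitution of (\ref{e.3.2}) into the Bessel-kernel integral --- everything else will follow formally.
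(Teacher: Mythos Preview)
Your proposal is correct and follows essentially the same approach as the paper: attach to each $f^i$ a closed $V_{k,\ell}^*(\mathbb{C})$-valued differential $1$-form (the paper cites \cite[\S2.5]{hida1994critical} for this), integrate it along geodesics between cusps in $C_i$ to recover the formula of Definition~\ref{pbmsattach} (the paper cites \cite[\S2.4, Prop.~2.9]{chris2017} and \cite[\S5.2]{ghate1999critical}), deduce $\Gamma_i(\gn)$-invariance from the automorphy relation, and check Hecke-equivariance componentwise via the double-coset description. Your write-up is in fact more explicit than the paper's terse citation-based sketch, particularly in isolating the convergence point (that $n=\ell+s-u+1$ ranges over $\{1,\ldots,k+\ell+1\}$, precisely the quasi-vanishing range), which is the key place where $C$-cuspidality enters.
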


\begin{proof}
First note that for each $i\in\{1,...,h\}$, the Bianchi modular form $f^i$ is $C_i$-cuspidal, then we can attach to it a vector valued differential 1-form following \cite[\S 2.5]{hida1994critical} (see also $\S 10$ in op.cit.). Just as in \cite[\S 2.4]{chris2017} we can integrate the resulting differential 1-form between cusps in $C_i$ and similarly to \cite[Prop2.9]{chris2017} and \cite[\S5.2]{ghate1999critical}, we obtain $\phi_{f^i}\in \mathrm{Symb}_{\Gamma_0^i(\gn),C_i}(V_{k,\ell}^*(\mathbb{C}))$. The $\mathcal{H}_{\gn,p}$-equivariance is an easy, but lengthy, check.
\end{proof}

\begin{remark}{\label{Remarkevaluarpartial}}
For each $i$ and integers $q,r,$ satisfying $0\leqslant q \leqslant k, 0\leqslant r \leqslant \ell$, we have
\begin{equation*}
\phi_{f^i}(\{a\}-\{\infty\})\left[(X+aY)^qY^{k-q}(\overline{X}+\overline{a}\overline{Y})^{r}\overline{Y}^{\ell-r}\right]=c_{q,r}^i(a), 
\end{equation*}
then we can link $\phi_{\mathcal{F}}$ with the $L$-values of $\mathcal{F}$ using Proposition \ref{T4.6}. 
\end{remark}

\section{\texorpdfstring{$p$}{Lg}-adic \texorpdfstring{$L$}{Lg}-function of \texorpdfstring{$C$}{Lg}-cuspidal Bianchi modular forms}{\label{S4}}

In this section, we define \textit{overconvergent partial Bianchi modular symbols} to link partial symbols with $p$-adic distributions and prove a control theorem. In Section \ref{S4.4} we construct $p$-adic $L$-functions of $C$-cuspidal Bianchi modular forms.

Henceforth, we denote $\roi_{K,p}:=\roi_K\otimes_\Z\Z_p$ to ease notation.
\subsection{Locally analytic distributions}{\label{sectionlocally}}

Suppose $p\roi_K=\prod_{\gp|p}\gp^{e_\gp}$ and define $f_\gp$ to be the residue class degree of $\gp$. Note that $\sum f_\gp e_\gp=2$. Using the embedding $\iota_p:\overline{\Q}\hookrightarrow\overline{\Q}_p$ from Section \ref{notations}, for each prime $\gp|p$, we have $f_\gp e_\gp$ embeddings $K_\gp\hookrightarrow\overline{\Q}_p$, and combining these for each prime, we get an embedding $\sigma:K\otimes\mathbb{Q}_p \hookrightarrow \overline{\Q}_p\times \overline{\Q}_p$ given by $\sigma(a)=(\sigma_1(a),\sigma_2(a))$.

For $r,s\in\R_{>0}$, define the rigid analytic $(r,s)$-neighborhood of $\tensorspace$ in $\C_p^2$ to be 
\begin{equation*}
B(\tensorspace,r,s):=\{(x,y)\in \mathbb{C}_p^2:\exists u\in\tensorspace \; \text{such that}\; |x-\sigma_1(u)|\leqslant r, |y-\sigma_2(u)|\leqslant s\},    
\end{equation*}
where $|\cdot|$ is the norm of $\C_p$ which is normalized as $|p| = p^{-1}$.

Let $L$ be a finite extension of $\mathbb{Q}_p$ containing the image of $\sigma_1$ and $\sigma_2$. We equip $L$ with a valuation $v_p$, normalized so that $v_p(p) = 1$. For $(r,s)$ as above, we write $\mathbb{A}[L,r,s]$ for the $L$-Banach space of rigid analytic functions on $B(\tensorspace,r,s)$ and $\mathbb{D}[L,r,s]$ for its Banach dual (see \cite[\S 5.1]{chris2017}).

Define the space of $L$-valued locally analytic distributions to be the projective limit
\begin{equation*}
\mathcal{D}(L)=\lim_{\longleftarrow}\mathbb{D}[L,r,s]=\bigcap_{r,s}\mathbb{D}[L,r,s]. 
\end{equation*}

We endow $\mathbb{A}[L,r,s]$ with a weight $(k,\ell)$-action of the semigroup
\begin{equation*}
\Sigma_0(p):=\left\{ \smallmatrixx{a}{b}{c}{d}\in M_2(\tensorspace): p|c, a\in\tensorspace^\times, ad-bc\nequal0 \right\}    
\end{equation*}
by setting 
\begin{equation*}
\gamma \cdot_{k,\ell} \zeta(x,y)= (a_1+c_1x)^k(a_2+c_2y)^\ell \zeta\left(\frac{b_1+d_1x}{a_1+c_1x},\frac{b_2+d_2y}{a_2+c_2y}\right);\, \sigma_i(\gamma)=\smallmatrixx{a_i}{b_i}{c_i}{d_i}, 
\end{equation*}
which is well-defined.

These actions are compatible for various $(r,s)$. By duality, they induce actions on $\mathbb{D}[L,r,s]$, and hence on $\mathcal{D}(L)$. We write $\mathbb{D}_{k,\ell}[L,r,s]$ (resp. $\mathcal{D}_{k,\ell}(L)$) for the spaces $\mathbb{D}[L,r,s]$ (resp. $\mathcal{D}(L)$) together with the weight $(k,\ell)$-action of $\Sigma_0(p)$.

\subsection{Overconvergent partial Bianchi modular symbols}{\label{section overconvergent}}

Recall the level $K_0(\gn)$ with $(p)|\gn$ from Section \ref{background} and for each $1\leqslant i \leqslant h$, the $\Gamma_0^i(\gn)$-invariant subset $C_i$ of $\mathbb{P}^1(K)$ from Section \ref{Fourier}. Since the lower left entry of a matrix in $\Gamma_0^i(\gn)$ is in $\gp$ for all $\gp|p$ (because $(p)|\gn$), we have that $\Gamma_0^i(\gn)\subset\Sigma_0(p)$. Then we can equip $\mathcal{D}_{k,\ell}(L)$ with an action of $\Gamma_0^i(\gn)$ and obtain well-defined partial modular symbols with values in $\mathcal{D}_{k,\ell}(L)$.

\begin{definition}
i) Define the space of \textit{overconvergent partial Bianchi modular symbols} on $C_i$ of weight $(k,\ell)$ and level $\Gamma_0^i(\gn)$ with coefficients in $L$ to be the space $\mathrm{Symb}_{\Gamma_0^i(\gn),C_i}(\mathcal{D}_{k,\ell}(L))$.\\
ii) Define the space of \textit{overconvergent partial Bianchi modular symbols} on $C$ of weight $(k,\ell)$ and level $K_0(\gn)$ with coefficients in $L$ to be the space
\begin{equation*}
\mathrm{Symb}_{K_0(\gn),C}(\mathcal{D}_{k,\ell}(L)):=\bigoplus_{i=1}^{h} \mathrm{Symb}_{\Gamma_0^i(\gn),C_i}(\mathcal{D}_{k,\ell}(L)).
\end{equation*}
\end{definition}

Note that the matrices appearing in Definition \ref{definitionoperator} of Hecke operators can be seen inside $\Sigma_0(p)$, then the Hecke algebra $\mathcal{H}_{\gn,p}$ acts on $\mathrm{Symb}_{K_0(\gn),C}(\mathcal{D}_{k,\ell}(L))$.

There is a natural map $\mathcal{D}_{k,\ell}(L)\rightarrow V_{k,\ell}^
*(L)$ given by dualizing the inclusion of $V_{k,\ell}(L)$ into $\mathcal{A}_{k,\ell}(L)$. This induces a $\mathcal{H}_{\gn,p}$-equivariant \textit{specialization map}
\begin{equation*}
\rho: \mathrm{Symb}_{K_0(\gn),C}(\mathcal{D}_{k,\ell}(L)) \longrightarrow \mathrm{Symb}_{K_0(\gn),C}(V_{k,\ell}^*(L)).  
\end{equation*}

Before proving \textit{the partial Bianchi control theorem}, we record the following lemma (analog to \cite[Lem 3.8]{chris2017}) that allows us to work with integral distributions by imitating \cite[Prop 3.9]{chris2017}.

\begin{lemma}{\label{fingen}}
$\Delta_{C_i}^0$ is finitely generated as a $\mathbb{Z}[\Gamma_0^i(\gn)]$-module for $i=1,...,h$.    
\end{lemma}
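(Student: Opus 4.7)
The plan is to adapt the argument of \cite[Lem 3.8]{chris2017} to the partial setting by exhibiting an explicit finite generating set of $\Delta_{C_i}^0$ over $\mathbb{Z}[\Gamma_i(\gn)]$. First I would verify that $\Gamma_i(\gn)\backslash C_i$ is finite: since $C_i=\Gamma_i(\gm)\infty\cup\Gamma_i(\gm)\cdot 0$ consists of two $\Gamma_i(\gm)$-orbits and $\Gamma_i(\gn)\subset\Gamma_i(\gm)$ has finite index (because $\gn=(p)\gm$), each $\Gamma_i(\gm)$-orbit breaks into at most $[\Gamma_i(\gm):\Gamma_i(\gn)]$ many $\Gamma_i(\gn)$-orbits. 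Fix representatives $c_1,\ldots,c_m$ for $\Gamma_i(\gn)\backslash C_i$.

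Next I would use that $\Gamma_i(\gn)$ is finitely generated, which is standard: $\Gamma_i(\gn)$ is commensurable with the Bianchi group $\mathrm{SL}_2(\mathcal{O}_K)$, which is finitely presented. Pick a finite generating set $S\subset\Gamma_i(\gn)$. The identity $gh-1=g(h-1)+(g-1)$ (together with $g^{-1}-1=-g^{-1}(g-1)$) then shows that the augmentation ideal $I=\ker(\mathbb{Z}[\Gamma_i(\gn)]\to\mathbb{Z})$ is generated as a left ideal by $\{s-1:s\in S\}$.

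My proposed finite generating set of $\Delta_{C_i}^0$ is
\begin{equation*}
\mathcal{S}=\{\{c_j\}-\{c_1\}:2\leqslant j\leqslant m\}\cup\{(s-1)\cdot\{c_j\}:s\in S,\;1\leqslant j\leqslant m\}.
\end{equation*}
To verify that it spans, I take an arbitrary element of $\Delta_{C_i}^0$, group its monomials by $\Gamma_i(\gn)$-orbit to rewrite it as $\sum_{j=1}^{m}a_j\{c_j\}$ with $a_j\in\mathbb{Z}[\Gamma_i(\gn)]$; the degree-zero condition becomes $\sum_j\mathrm{aug}(a_j)=0$. Splitting $a_j=\mathrm{aug}(a_j)\cdot 1+(a_j-\mathrm{aug}(a_j))$ decomposes the element into a piece $\sum_j\mathrm{aug}(a_j)\{c_j\}=\sum_{j\geqslant 2}\mathrm{aug}(a_j)(\{c_j\}-\{c_1\})$, a $\mathbb{Z}$-linear combination of the first family, and a piece $\sum_j(a_j-\mathrm{aug}(a_j))\{c_j\}$ whose summands lie in $I\cdot\{c_j\}$ and hence form a $\mathbb{Z}[\Gamma_i(\gn)]$-linear combination of the second family.

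The only non-formal input is the finite generation of $\Gamma_i(\gn)$, which I expect to be the main obstacle for a reader unfamiliar with arithmetic groups; the remainder of the argument is purely group-ring bookkeeping enabled by the finite-index observation on cusps that distinguishes the partial setting from the full one treated by Williams.
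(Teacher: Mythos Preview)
Your proof is correct and follows essentially the same approach as the paper: the paper's proof simply asserts that the result follows from $\Gamma_i(\gn)$ being finitely generated and $\Gamma_i(\gn)\backslash C_i$ being finite, whereas you have spelled out precisely how these two facts combine via the augmentation-ideal argument to yield an explicit finite generating set. Your version is considerably more detailed than the paper's one-sentence sketch, but the underlying strategy is identical.
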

\begin{proof}
This follows from the fact that $\Gamma_0^i(\gn)$ is a finitely generated group and the set of orbits of the action of $\Gamma_0^i(\gn)$ in $C_i$ is finite.
\end{proof}

\begin{proposition}{\label{partialcontrol}}
(Williams' Bianchi control theorem). For each prime $\gp$ above $p$, let $\lambda_{\gp}\in L^\times$. Suppose that $v_p(\lambda_{\gp})<(\mathrm{min}\{{k,\ell\}}+1)/e_{\gp}$ when $p=\gp^{e_\gp}$ is inert or ramified, or $v_p(\lambda_{\gp})<k+1$ and $v_p(\lambda_{\overline{\gp}})<\ell+1$ when $p$ splits as $\gp\overline{\gp}$ and $\gp,\overline{\gp}$ correspond to the embeddings $\sigma_1,\sigma_2$ from Section \ref{sectionlocally} respectively, then the restriction of the specialization map 
\begin{equation*}
\rho: \mathrm{Symb}_{K_0(\gn),C}(\mathcal{D}_{k,\ell}(L))^{\{U_{\gp}=\lambda_{\gp}:\gp|p\}} \longrightarrow\mathrm{Symb}_{K_0(\gn),C}(V_{k,\ell}^*(L))^{\{U_{\gp}=\lambda_{\gp}:\gp|p\}}    
\end{equation*}
to the simultaneous $\lambda_{\gp}$-eigenspaces of the $U_{\gp}$ operators is an isomorphism.
\end{proposition}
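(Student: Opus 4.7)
The plan is to adapt Williams' Bianchi control theorem \cite[Thm 5.4]{chris2017} — proved there for the full space of Bianchi modular symbols — to the partial setting. The key enabling ingredient is Lemma \ref{fingen}: finite generation of $\Delta_{C_i}^0$ as a $\mathbb{Z}[\Gamma_i(\gn)]$-module is exactly what one needs to equip $\mathrm{Symb}_{\Gamma_i(\gn),C_i}(\mathcal{D}_{k,\ell}(L))$ with an integral lattice structure on which to run the compactness and slope arguments for $U_{\gp}$, mirroring \cite[Prop 3.9]{chris2017}.

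First I would set up the specialization map as the dual of the inclusion $V_{k,\ell}(L) \hookrightarrow \mathbb{A}[L,r,s]$, giving a short exact sequence of $\Sigma_0(p)$-modules
\begin{equation*}
0 \to \mathcal{N}_{k,\ell}(L) \to \mathcal{D}_{k,\ell}(L) \to V_{k,\ell}^*(L) \to 0.
\end{equation*}
Applying the left-exact functor $\mathrm{Hom}_{\Gamma_i(\gn)}(\Delta_{C_i}^0, -)$ and summing over $i$ yields an $\mathcal{H}_{\gn,p}$-equivariant four-term exact sequence
\begin{equation*}
0 \to \mathrm{Symb}_{K_0(\gn),C}(\mathcal{N}_{k,\ell}(L)) \to \mathrm{Symb}_{K_0(\gn),C}(\mathcal{D}_{k,\ell}(L)) \xrightarrow{\rho} \mathrm{Symb}_{K_0(\gn),C}(V^*_{k,\ell}(L)) \to E^1,
\end{equation*}
where $E^1 = \bigoplus_{i} \mathrm{Ext}^1_{\mathbb{Z}[\Gamma_i(\gn)]}(\Delta_{C_i}^0, \mathcal{N}_{k,\ell}(L))$. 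It therefore suffices to prove that $U_{\gp}$ acts on both outer terms with every eigenvalue of slope strictly larger than the bound in the hypothesis, since then cutting out the simultaneous $\lambda_{\gp}$-eigenspaces for each $\gp \mid p$ annihilates the kernel and the obstruction $E^1$, forcing $\rho$ to be an isomorphism on those eigenspaces.

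The slope estimate itself proceeds exactly as in the non-partial case. I would write $U_{\gp}$ as a sum over coset representatives involving $\smallmatrixx{1}{a}{0}{\pi_{\gp}}$ (twisted by an auxiliary $\smallmatrixx{1}{0}{0}{\alpha}$ to cycle through the class group representatives $I_1, \ldots, I_h$, exactly as in \eqref{e.1.19}), then compute the weight $(k,\ell)$-action of these matrices on a distribution in $\mathcal{N}_{k,\ell}(L)$. Because $\mathcal{N}_{k,\ell}(L)$ is the annihilator of polynomials of bidegree $\le (k,\ell)$, expanding the action forces an additional contraction: in the inert or ramified case the single prime $\gp$ above $p$ contributes a factor of valuation at least $(\min\{k,\ell\}+1)/e_{\gp}$, while in the split case the primes $\gp, \overline{\gp}$ decouple and contribute factors of valuation at least $k+1$ and $\ell+1$ respectively. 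This is precisely the slope gap making $\lambda_{\gp}$ fail to be an eigenvalue on $\mathrm{Symb}_{K_0(\gn),C}(\mathcal{N}_{k,\ell}(L))$ and on $E^1$.

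The main obstacle I foresee is the compactness/integrality step: the spectral decomposition of $U_{\gp}$ on the partial symbol space is not automatic from the non-partial case and must be re-derived. One has to choose a finite set of $\mathbb{Z}[\Gamma_i(\gn)]$-generators of $\Delta_{C_i}^0$ (which exists by Lemma \ref{fingen}) in order to realize $\mathrm{Symb}_{\Gamma_i(\gn),C_i}(\mathcal{D}_{k,\ell}(L))$ as a closed subspace of a finite direct sum of copies of $\mathcal{D}_{k,\ell}(L)$, thereby transferring the Banach and integral-lattice structure on distributions to the symbol space. Once this integral model is in place the action of $U_{\gp}$ is compact in Serre's sense, the Riesz decomposition isolates the finite-dimensional $\lambda_{\gp}$-eigenspace, and the slope computation above completes the argument.
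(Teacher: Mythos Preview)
Your proposal is correct in outline and identifies the right ingredients (Lemma \ref{fingen} for the integral structure, the coefficient-level slope estimate on $\mathcal{N}_{k,\ell}$), but it takes a different route from the paper's own proof. The paper simply points to Williams \cite{chris2017} and follows the explicit Pollack--Stevens/Greenberg method there: one first proves the control theorem for integral rigid analytic distributions $\mathbb{D}_{k,\ell}[\mathcal{O}_L,1,1]$ via the filtration argument of \cite[\S 4]{chris2017} (replacing the bound $k+1$ by $\min\{k,\ell\}+1$), then passes to $L$-coefficients and to $\mathcal{D}_{k,\ell}(L)$ by \cite[\S 5.2]{chris2017}, and finally treats the ramified and split cases via \cite[Lem 6.9]{chris2017} and \cite[\S 6]{chris2017}. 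In that framework surjectivity is obtained not by an $\mathrm{Ext}^1$ obstruction but by lifting arbitrarily and iterating $\lambda_{\gp}^{-1}U_{\gp}$ to produce a convergent eigensymbol.

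Your cohomological packaging---surjectivity via vanishing of the small-slope part of $E^1=\bigoplus_i\mathrm{Ext}^1_{\mathbb{Z}[\Gamma_i(\gn)]}(\Delta_{C_i}^0,\mathcal{N}_{k,\ell}(L))$---is exactly the alternative the paper acknowledges in the remark immediately following the proof, citing \cite{urban}, \cite{hansen}, \cite{canadian}. One point to tighten: vanishing of the $\lambda_{\gp}$-eigenspace in $E^1$ does not by itself force surjectivity of $\rho$ on $\lambda_{\gp}$-eigenspaces, since taking eigenspaces is not exact. You need the slope decomposition on all four terms, including $E^1$, so that the four-term sequence restricts to the slope-$\leq h$ summands; only then does the eigenspace statement follow. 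Your compactness argument via Lemma \ref{fingen} is stated for the $\mathrm{Symb}$ spaces, and you should note that the same finite-generation input (a finite-type projective resolution of $\Delta_{C_i}^0$) is what gives compactness and Riesz theory on $E^1$ as well. The paper's explicit iteration sidesteps this entirely, which is the main practical difference between the two approaches.
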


\begin{proof}
This result is proved like its counterpart for Bianchi modular symbols of parallel weight $(k,k)$ in \cite{chris2017}, doing the corresponding adaptations for weight $(k,\ell)$ and cusps. Here we just present a brief idea of how to follow Williams' proof.

First, the control theorem is proved for the space of integral rigid analytic distributions, $\mathbb{D}_{k,\ell}[\roi_L,1,1]$, with the operator $U_p$ using \cite[Cor 4.3]{chris2017} and noting the change of the condition $v_p(\lambda)<k+1$ in \cite[Lem 3.15]{chris2017} to $v_p(\lambda)<\mathrm{min}\{k,\ell\}+1$. Then, we can extend the result for $\mathbb{D}_{k,\ell}[L,1,1]$ and by \cite[Prop 5.2]{chris2017}, we obtain the result for $\mathcal{D}_{k,\ell}(L)$. 

When $p$ is inert, the process described above allows us to prove the theorem. For $p$ ramified as $\gp^2$, we note that $U_\gp=U_{\gp^2}=U_\gp^2$ and use \cite[Lem 6.9]{chris2017} to obtain the result. Finally, when $\gp$ splits, we adapt the methods in \cite[\S6]{chris2017} to lift simultaneous eigensymbols of $U_\gp$ and $U_{\overline{\gp}}$, obtaining the more subtle result on the slope.
\end{proof}

\begin{remark}
Proposition \ref{partialcontrol} can be proved using a cohomological interpretation of partial Bianchi modular symbols as \cite[\S 2.4]{bellaiche2015p} and following \cite{urban}, \cite{hansen}, \cite{canadian}. In this paper we did not use such an interpretation, however, in a forthcoming paper where we use partial Bianchi modular symbols to construct $p$-adic $L$-function in families, cohomology will be key.
\end{remark}

\begin{definition}
Let $\mathcal{F}$ be an eigenform with eigenvalues $\lambda_I$, we say $\mathcal{F}$ has \textit{small slope} if $v_p(\lambda_{\gp})<(\mathrm{min}\{{k,\ell\}}+1)/e_{\gp}$ when $p=\gp^{e_\gp}$ is inert or ramified; or if $v_p(\lambda_{\gp})<k+1$ and $v_p(\lambda_{\overline{\gp}})<\ell+1$ when $p$ splits as $\gp\overline{\gp}$ and $\gp,\overline{\gp}$ correspond to the embeddings $\sigma_1,\sigma_2$ from Section \ref{sectionlocally} respectively. We say $\mathcal{F}$ has \textit{critical slope} if it does not have small slope.
\label{D10.4}
\end{definition}

\subsection{Admissible distributions}{\label{admissible}} 

For each pair $r,s\in \R$, the space $\mathcal{D}_{k,\ell}[L,r,s]$ from Section \ref{sectionlocally} admits an operator norm $||\cdot||_{r,s}$ via
\begin{equation*}
||\mu||_{r,s}= \sup_{0\neq f \in \mathcal{A}_{k,\ell}[L,r,s]} \frac{|\mu(f)|_p}{|f|_{r,s}},
\end{equation*}
where $|\cdot|_p$ is the usual $p$-adic absolute value on $L$ and $|\cdot|_{r,s}$ is the sup norm on $\mathcal{A}_{k,\ell}[L,r,s]$. Note that if $r \leqslant r'$, $s\leqslant s'$, then $||\mu||_{r,s}\geqslant||\mu||_{r',s'}$ for $\mu\in \mathcal{D}_{k,\ell}[L,r',s']$.

These norms give rise to a family of norms on the space of locally analytic functions that allow us to classify locally analytic distributions by growth properties as we vary in this family.

\begin{definition}
Let $\mu\in\mathcal{D}_{k,\ell}(L)$ be a locally analytic distribution.

i) Suppose $p$ is inert or ramified in $K$, we say $\mu$ is $h$-admissible if $||\mu||_{r,r}=O(r^{-h})$ as $r\rightarrow 0^{+}$.

ii) Suppose $p$ splits in $K$, we say $\mu$ is $(h_1,h_2)$-admissible if $||\mu||_{r,s}=O(r^{-h_1})$ uniformly in $s$ as $r\rightarrow 0^{+}$, and $||\mu||_{r,s}=O(r^{-h_2})$ uniformly in $r$ as $s\rightarrow 0^{+}$.

We say that $\mu$ is a measure, if it is bounded, i.e., is $0$-admissible or $(0,0)$-admissible, depending on the splitting behavior of $p$ in $K$.
\end{definition}

\begin{proposition}
Let $\Psi\in \mathrm{Symb}_{\Gamma_0^i(\gn),C_i}(\mathcal{D}_{k,\ell}(L))$, and $r,s\leqslant1$. Defining 
\begin{equation*}
||\Psi||_{r,s}:= \sup_{D\in\Delta_{C_i}^0}||\Psi(D)||_{r,s} 
\end{equation*}
gives a well-defined norm on $ \mathrm{Symb}_{\Gamma_0^i(\gn),C_i}(\mathcal{D}_{k,\ell}(L))$.
\end{proposition}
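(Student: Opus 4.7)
The plan is to show finiteness of the supremum by reducing it to a maximum over a finite set of generators via Lemma \ref{fingen}, combined with the fact that $\Gamma_i(\gn)$ acts by isometries on $\mathbb{D}_{k,\ell}[L,r,s]$ for $r,s\leqslant 1$; the norm axioms then descend pointwise from those of $\mathbb{D}_{k,\ell}[L,r,s]$.

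First, I would verify the isometric property. By duality, the right action of $\gamma\in\Gamma_i(\gn)\subset\Sigma_0(p)$ on $\mathbb{D}_{k,\ell}[L,r,s]$ preserves $||\cdot||_{r,s}$ if and only if the left weight-$(k,\ell)$ action on $\mathbb{A}[L,r,s]$ preserves $|\cdot|_{r,s}$. Writing $\sigma_i(\gamma)=\smallmatrixx{a_i}{b_i}{c_i}{d_i}$, the conditions $a\in\tensorspace^\times$, $c\in p\tensorspace$ give $|a_i|_p=1$ and $|c_i|_p\leqslant p^{-1}$, so the weight factor $(a_1+c_1x)^k(a_2+c_2y)^\ell$ has $p$-adic absolute value $1$ on $B(\tensorspace,r,s)$. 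Moreover, for $u\in\tensorspace$, a direct calculation using $\det\gamma=1$ yields
\begin{equation*}
\frac{b_i+d_i x}{a_i+c_i x}-\sigma_i(\gamma\cdot u)=\frac{x-\sigma_i(u)}{(a_i+c_i x)(a_i+c_i\sigma_i(u))},
\end{equation*}
whose absolute value equals $|x-\sigma_i(u)|$. Hence the fractional linear transformation sends $B(\tensorspace,r,s)$ into itself, giving the claimed isometry.

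Second, $\Gamma_i(\gn)$-invariance of $\Psi$ gives $\Psi(\gamma D)=\Psi(D)_{|\gamma^{-1}}$; combined with step one, $||\Psi(\gamma D)||_{r,s}=||\Psi(D)||_{r,s}$ for every $\gamma\in\Gamma_i(\gn)$. Third, by Lemma \ref{fingen}, fix finite $\mathbb{Z}[\Gamma_i(\gn)]$-generators $D_1,\ldots,D_n$ of $\Delta_{C_i}^0$. Any $D\in\Delta_{C_i}^0$ decomposes as $D=\sum_{\alpha,j}m_{\alpha,j}\gamma_{\alpha,j}D_\alpha$ with $m_{\alpha,j}\in\mathbb{Z}$, $\gamma_{\alpha,j}\in\Gamma_i(\gn)$, so the non-archimedean triangle inequality, the bound $|m|_p\leqslant 1$ for $m\in\mathbb{Z}$, and step two yield
\begin{equation*}
||\Psi(D)||_{r,s}\leqslant\max_{\alpha,j}|m_{\alpha,j}|_p\cdot||\Psi(\gamma_{\alpha,j}D_\alpha)||_{r,s}\leqslant\max_\alpha||\Psi(D_\alpha)||_{r,s}.
\end{equation*}
Taking the supremum over $D$, $||\Psi||_{r,s}\leqslant\max_\alpha||\Psi(D_\alpha)||_{r,s}<\infty$.

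Finally, the three norm axioms transfer pointwise from $||\cdot||_{r,s}$ on $\mathbb{D}_{k,\ell}[L,r,s]$: the ultrametric triangle inequality and $L$-homogeneity are immediate, and definiteness follows because $||\Psi||_{r,s}=0$ forces $\Psi(D)=0$ for all $D\in\Delta_{C_i}^0$, i.e.\ $\Psi=0$. The main obstacle is the first step: checking that the $\Sigma_0(p)$-action restricts to isometries of the rigid neighborhoods $B(\tensorspace,r,s)$ for general $r,s\leqslant 1$, which depends crucially on $|a_i|_p=1$, $|c_i|_p\leqslant p^{-1}$, and $\det\gamma=1$. After that, the rest is the expected finite-generation reduction.
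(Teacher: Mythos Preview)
Your proof is correct and follows the same overall strategy as the paper: reduce to finitely many generators via Lemma~\ref{fingen} and control the $\Gamma_i(\gn)$-action on $\mathbb{A}_{k,\ell}[L,r,s]$. The paper, citing \cite[Lem~5.11]{chris2017}, records only the weaker statement that $|\gamma\cdot_{k,\ell}f|_{r,s}$ and $|f|_{r,s}$ are comparable up to constants $A,A'$ (phrased as possibly depending on $\gamma$), and then asserts the existence of a uniform bound $||\Psi(D)||_{r,s}\leqslant B\,||\Psi(D_1)||_{r,s}$. Your explicit computation showing that the $\Gamma_i(\gn)$-action is an honest isometry (so $A=A'=1$) is sharper: it makes the reduction to generators completely clean via the ultrametric inequality and removes any concern about whether the constant $B$ is uniform over all decompositions of $D$. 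This is a small improvement in precision, but the underlying idea is the same as the paper's.
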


\begin{proof}
The proof of this proposition is almost identical to the proof of \cite[Def-Prop 5.12]{chris2017}. In fact, first we write $D\in \Delta_{C_i}^0$ as $D=\alpha_1 D_1+\cdots+\alpha_n D_n$ for a finite set of generators $D_j$ and $\alpha_j\in \mathbb{Z}[\Gamma_0^i(\gn)]$ using Lemma \ref{fingen}. Then for every $\gamma\in\Gamma_0^i(\gn)$ and $f\in\mathcal{A}_{k,\ell}[L,r,s]$ we can prove (in the same way of \cite[Lem 5.11]{chris2017}) that exist positive constants $A$ and $A'$ such that
\begin{equation*}
A|\gamma\cdot_{k,\ell} f|_{r,s} \leqslant |f|_{r,s} \leqslant A'|\gamma\cdot_{k,\ell} f|_{r,s}.    
\end{equation*} 
Finally, by the above there exists a constant $B$ such that (without loss of generality)
\begin{equation*}
||\Psi(D)||_{r,s}\leqslant B||\Psi(D_1)||_{r,s}.  
\end{equation*}
In particular, the supremum is finite and hence gives a well-defined norm as required.
\end{proof}

The following proposition is the analog of \cite[Props 5.12, 6.15]{chris2017} for overconvergent partial Bianchi modular symbols.

\begin{proposition}{\label{admissibility}}
Let $\Psi \in \mathrm{Symb}_{\Gamma_0^i(\gn),C_i}(\mathcal{D}_{k,\ell}(L))$.

i) Suppose $p$ is inert in $K$ and $\Psi$ is a $U_p$-eigensymbol with eigenvalue $\lambda$ and slope $h = v_p(\lambda)$. Then, for every $D\in\Delta_{C_i}^0$, the distribution $\Psi(D)$ is $h$-admissible.

ii) Suppose $p$ splits in $K$ as $\gp\overline{\gp}$ and $\Psi$ is simultaneously a $U_{\gp}^n$- and $U_{\overline{\gp}}^n$-eigensymbol for some $n$, with non-zero eigenvalues $\lambda_1^n$ and $\lambda_2^n$ with slopes $h_1=v_p(\lambda_1)$ and $h_2=v_p(\lambda_2)$. Then, for every $D\in\Delta_{C_i}^0$, the distribution $\Psi(D)$ is $(h_1,h_2)$-admissible.
\end{proposition}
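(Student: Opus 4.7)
The plan is to follow the strategy of Williams' proof in \cite[Props 5.12, 6.15]{chris2017}, exploiting the eigenvalue equation to trade the slope of $\Psi$ for improved analyticity of the values $\Psi(D)$. The core mechanism is that for large $n$ the operator $U_\gp^n$ (or $U_p^n$ in the inert case) decomposes as a sum of double-coset representatives whose weight $(k,\ell)$-action on $\mathcal{A}_{k,\ell}[L,r,s]$ contracts the defining rigid analytic neighborhoods, improving the radii at which one is testing the distribution $\Psi(D)$.

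First I would note that by Lemma \ref{actionHeckeoperator}, the operators $U_\gp$ for $\gp \mid p$ preserve $\mathrm{Symb}_{\Gamma_i(\gn),C_i}(\mathcal{D}_{k,\ell}(L))$, so the eigenvalue equations can be iterated inside the partial symbol space. In the inert case, the identity $\Psi = \lambda^{-n}(\Psi \mid U_p^n)$ together with the explicit coset decomposition of $U_p^n$ expresses $\Psi(D)$ as $\lambda^{-n}$ times a finite sum of terms of the form $\Psi(\gamma D) \mid_{k,\ell} \gamma$, where each $\gamma$ has reduction properties that rescale $B(\tensorspace, r, r)$ by a factor of $p^{-n}$. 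Combining this with the operator-norm estimate analogous to \cite[Lem 5.11]{chris2017}, one obtains
\begin{equation*}
\|\Psi(D)\|_{r,r} \;\leq\; C\, |\lambda|^{-n}\, \sup_j \|\Psi(D_j)\|_{p^{-n}r,\, p^{-n}r},
\end{equation*}
where $D_1,\dots,D_m$ are a fixed finite set of $\mathbb{Z}[\Gamma_i(\gn)]$-generators of $\Delta^0_{C_i}$ (which exist by Lemma \ref{fingen}). Choosing $n$ so that $p^{-n}r$ lies in a fixed interval bounded away from $0$, the right-hand norms are uniformly bounded, and $|\lambda|^{-n} = O(r^{-h})$ with $h = v_p(\lambda)$ gives $h$-admissibility of $\Psi(D)$.

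For the split case, the arguments for $U_\gp^n$ and $U_{\overline{\gp}}^n$ decouple because the embedding $\sigma : \tensorspace \hookrightarrow \overline{\Q}_p \times \overline{\Q}_p$ splits the two factors: matrices appearing in the coset decomposition of $U_\gp^n$ act nontrivially only on the first coordinate, rescaling the first radius $r$ by $p^{-n}$ while leaving $s$ untouched, and symmetrically for $U_{\overline{\gp}}^n$ on the second coordinate. Using $\Psi = \lambda_1^{-n}(\Psi \mid U_\gp^n)$ we obtain $\|\Psi(D)\|_{r,s} \leq C\,|\lambda_1|^{-n} \sup_j \|\Psi(D_j)\|_{p^{-n}r,\, s}$ uniformly in $s \leq 1$, which yields the bound $\|\Psi(D)\|_{r,s} = O(r^{-h_1})$ uniformly in $s$. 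The analogous iteration with $U_{\overline{\gp}}^n$ gives the second bound, together producing $(h_1, h_2)$-admissibility.

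The main obstacle is bookkeeping: one must check that when the matrices from the decomposition of $U_\gp^n$ are applied to a divisor $D \in \Delta^0_{C_i}$, the resulting divisors still lie in $\Delta^0_{C_j}$ for the appropriate $j$, so that $\Psi$ can be evaluated on them. This is precisely guaranteed by the stability argument in the proof of Lemma \ref{actionHeckeoperator} (facts 1) and 2) used there for $C_i$). Once this is verified, the quantitative estimates controlling the effect of the weight $(k,\ell)$-action of $\Sigma_0(p)$ on $\|\cdot\|_{r,s}$ transfer verbatim from \cite[\S 5.2, \S 6]{chris2017}, and the two cases of the proposition follow as above.
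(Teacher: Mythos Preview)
Your overall strategy is the same as the paper's: both invoke Williams' argument, iterating the $U_p$ (resp.\ $U_\gp^n$, $U_{\overline\gp}^n$) eigenvalue equation and using that the coset representatives improve analyticity, together with the finite generation of $\Delta_{C_i}^0$ from Lemma~\ref{fingen}. So conceptually there is no disagreement.

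However, the key displayed inequality in your write-up is in the wrong direction. You claim
\[
\|\Psi(D)\|_{r,r}\;\leq\; C\,|\lambda|^{-n}\,\sup_j\|\Psi(D_j)\|_{p^{-n}r,\,p^{-n}r},
\]
but $p^{-n}r<r$, and since $\|\cdot\|_{r,s}$ \emph{increases} as the radii shrink, the right-hand side is the larger norm; this bound gives no control as $r\to 0^+$. The follow-up sentence ``choosing $n$ so that $p^{-n}r$ lies in a fixed interval bounded away from $0$'' is then impossible for small $r$ and positive $n$. The correct estimate (and the one the paper records, following \cite[Prop.~5.13]{chris2017}) goes the other way:
\[
\|\Psi(D)\|_{r/p^m,\,r/p^m}\;\leq\;|\lambda|^{-m}\,\|\Psi\|_{r,r},
\]
i.e.\ one bounds the norm at the \emph{smaller} radius $r/p^m$ by $|\lambda|^{-m}$ times the norm at the fixed radius $r$. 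Fixing $r$ (say $r=1$) and letting $m$ vary then yields $\|\Psi(D)\|_{r',r'}=O((r')^{-h})$ directly. The split case has the same issue: the paper's inequalities are
\[
\|\Psi(D)\|_{r/p^{mn},\,s}\leq |\lambda_1|^{-mn}\|\Psi\|_{r,s},\qquad
\|\Psi(D)\|_{r,\,s/p^{mn}}\leq |\lambda_2|^{-mn}\|\Psi\|_{r,s},
\]
not the reversed versions you wrote. The source of the slip is that while the matrices $\gamma$ in the $U_p^n$-decomposition do contract $B(\tensorspace,r,r)$ geometrically, their action on test functions sends $\mathcal{A}[L,r,r]$ into the \emph{more} analytic space $\mathcal{A}[L,p^n r,p^n r]$, which is what produces the bound in the correct direction. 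Once you reverse the radii in your inequality (equivalently, replace $p^{-n}r$ by $p^{n}r$), your argument is fine and coincides with the paper's.
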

\begin{proof}
To prove part i) we follow line by line the proof of \cite[Prop 5.13]{chris2017} and conclude that for any $r$ and positive integer $m$, we have
\begin{equation*}
||\Psi(D)||_{r/p^m,r/p^m}\leqslant |\lambda|_p^{-m}||\Psi||_{r,r}.    
\end{equation*}

To prove part ii) we use the same idea of part i) and conclude that for any $r,s$ using $m$-th powers of the operators $U_\gp^n$ and $U_{\overline{\gp}}^n$ we have 
\begin{equation*}
||\Psi(D)||_{r/p^{mn},s}\leqslant |\lambda_1|_p^{-mn}||\Psi||_{r,s}\;\;\text{and}\;\;||\Psi(D)||_{r,s/p^{mn}}\leqslant |\lambda_2|_p^{-mn}||\Psi||_{r,s},    
\end{equation*}
and combining both, we obtain the result.
\end{proof}

\subsection{Construction of the \texorpdfstring{$p$}{Lg}-adic \texorpdfstring{$L$}{Lg}-function}{\label{S4.4}}

In this section, we recall ray class groups, define the Mellin transform of an overconvergent partial Bianchi modular symbol and finally construct the $p$-adic $L$-function of a $C$-cuspidal Bianchi modular form.

\begin{definition}
Define the ray class group of level $p^\infty$ to be
\begin{equation*}
\mathrm{Cl}_K(p^{\infty})=K^\times\backslash \mathbb{A}_K^\times/\mathbb{C}^\times\prod_{v\nmid p}\roi_v^\times
\end{equation*}
and denote by $\mathfrak{X}(\mathrm{Cl}_K(p^{\infty}))$ the two-dimensional rigid space of $p$-adic characters on $\mathrm{Cl}_K(p^{\infty})$.
\end{definition}

Note that $\mathrm{Cl}_K(p^\infty)$ can be written as $\mathrm{Cl}_K(p^\infty)=\bigcup_{x\in\mathrm{Cl}_K}\mathrm{Cl}_K^x(p^\infty)$, where $\mathrm{Cl}_K^x(p^\infty)$ is the fiber of $x$ under the canonical surjection $\mathrm{Cl}_K(p^\infty)\twoheadrightarrow\mathrm{Cl}_K$ to the class group of $K$, also note that, the choice of $t_i\in \mathbb{A}_K^{f ,\times}$ in Section \ref{background} identifies $\mathrm{Cl}_K^i(p^\infty)$ non-canonically with $\tensorspace^\times/\mathcal{O}_K^\times$.

Let $\Psi = \{\Psi_x\}_{x \in \mathrm{Cl}_K}\in \mathrm{Symb}_{K_0(\gn),C}(\mathcal{D}_{k,\ell}(L))$ be an overconvergent partial Bianchi modular symbol, we define for $i,x\in\mathrm{Cl}_K$ a distribution $\mu_i(\Psi_x)\in\mathcal{D}(\mathrm{Cl}_K^i(p^\infty),L)$ as follows. 

Since $\{0\}-\{\infty\}\in \Delta_{C_i}^0 $ for all $i$, we have a distribution $\Psi_x(\{0\}-\{\infty\})|_{\tensorspace^\times}$ on $\tensorspace^\times$. This restricts to a distribution on $\tensorspace^\times/\mathcal{O}_K^\times$, which gives the distribution $\mu_i(\Psi_x)$ on $\mathrm{Cl}_K^i(p^\infty)$ under the identification above, for more details see \cite[\S 7.4]{chris2017}. 

\begin{definition}{\label{mellin}}
The \textit{Mellin transform} of $\Psi\in \mathrm{Symb}_{K_0(\gn),C}(\mathcal{D}_{k,\ell}(L))$ is the ($L$-valued) locally analytic distribution on $\mathrm{Cl}_K(p^\infty)$ given by 
\begin{equation*}
\mathrm{Mel}(\Psi):=\sum_{i\in\mathrm{Cl}_K}\mu_i(\Psi_i)\in \mathcal{D}(\mathrm{Cl}_K(p^\infty),L).
\end{equation*}
\end{definition}

\begin{remark}
The distribution $\mathrm{Mel}(\Psi)$ is independent of the choice of class group representatives.
\end{remark}

The theory of partial Bianchi modular symbols developed in Sections \ref{S3} and \ref{S4}, allows us to construct the $p$-adic $L$-function of small slope $C$-cuspidal Bianchi modular forms.

\begin{theorem}\label{C-cuspidal p-adic}
Let $\mathcal{F}$ be a small slope $C$-cuspidal Bianchi eigenform of weight $(k,\ell)$ and level $K_0(\gn)$. Let $\iota$ be an isomorphism $\mathbb{C}\xrightarrow{\sim}\overline{\mathbb{Q}}_p$ satisfying $\iota\circ\iota_\infty=\iota_p$. Then there exists a locally analytic distribution $L_p^{\iota}(\mathcal{F},-)$ on $\mathrm{Cl}_K(p^{\infty})$ such that for any Hecke character of $K$ of conductor $\mathfrak{f}=\prod_{\gp|p}\gp^{r_{\gp}}$ and infinity type $(q,r)$ satisfying $0\leqslant q \leqslant k, 0\leqslant r \leqslant \ell$, we have 
\begin{equation}{\label{interpolationprop}}
L_p^{\iota}(\mathcal{F},\psi_{p-\mathrm{fin}})=\left( \prod_{\mathfrak{p}|p} Z_{\mathfrak{p}}(\psi) \right) \left[ \frac{D_Kw_K\tau(\psi)}{(-1)^{\ell+q+r}2\lambda_\mathfrak{f}} \right] \Lambda(\mathcal{F},\psi),
\end{equation}
where $\psi_{p-\mathrm{fin}}$ is the $p$-adic avatar of $\psi$ as in \cite[\S 7.3]{chris2017}, $U_\gf \mathcal{F} = \lambda_\gf \mathcal{F}$ with $U_\gf=\prod_{\gp|p}U_\gp^{r_\gp}$ and
\begin{equation*}
Z_\mathfrak{p}(\psi):=\begin{cases} 1-[\lambda_\gp\psi(\gp)]^{-1} & : \gp\nmid\gf, \\ 1 & :\mbox{else}. \end{cases}
\end{equation*}
The distribution $L_p^{\iota}(\mathcal{F},-)$ is $(h_\mathfrak{p})_{\mathfrak{p}|p}$-admissible, where $h_\mathfrak{p}=v_p(\lambda_\mathfrak{p})$, and therefore is unique.

We call $L_p^{\iota}(\mathcal{F},-)$ the $p$-adic $L$-function of $\mathcal{F}$.
\end{theorem}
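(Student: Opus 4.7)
The plan is to follow the blueprint announced in the introduction: attach a partial Bianchi modular symbol to $\mathcal{F}$, lift it overconvergently via the control theorem, and take its Mellin transform. Concretely, I would first apply Proposition \ref{attach partial} to produce the complex-valued partial Bianchi modular eigensymbol $\phi_{\mathcal{F}} = (\phi_{f^1},\ldots,\phi_{f^h}) \in \mathrm{Symb}_{K_0(\gn),C}(V_{k,\ell}^*(\mathbb{C}))$. Using the isomorphism $\iota$, view $\phi_{\mathcal{F}}$ as taking values in $V_{k,\ell}^*(L)$ for a finite extension $L/\mathbb{Q}_p$ containing the image of $\sigma$ and the Hecke eigenvalues. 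The small slope hypothesis is exactly that of Proposition \ref{partialcontrol}, so the restriction of the specialization map to the simultaneous $\lambda_\gp$-eigenspaces is an isomorphism, producing a unique overconvergent partial eigenlift $\Psi_{\mathcal{F}} \in \mathrm{Symb}_{K_0(\gn),C}(\mathcal{D}_{k,\ell}(L))$. I then set $L_p^{\iota}(\mathcal{F},-) := \mathrm{Mel}(\Psi_{\mathcal{F}})$ from Definition \ref{mellin}.

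To verify (\ref{interpolationprop}), let $\psi$ be a Hecke character with conductor $\gf = \prod_{\gp|p}\gp^{r_\gp}$ and infinity type $0 \leqslant (q,r) \leqslant (k,\ell)$. Expanding $\mathrm{Mel}(\Psi_{\mathcal{F}})(\psi_{p-\mathrm{fin}})$ gives a sum over $i \in \mathrm{Cl}_K$ of integrals of $\Psi_i(\{0\}-\{\infty\})|_{\tensorspace^\times/\mathcal{O}_K^\times}$ against the restriction of the $p$-adic avatar of $\psi$. Since $\psi$ has infinity type in the algebraic range, $\psi_{p-\mathrm{fin}}$ is locally algebraic on the relevant cosets, and the integrals factor through the specialization $\rho(\Psi_{\mathcal{F}}) = \phi_{\mathcal{F}}$. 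Using Remark \ref{Remarkevaluarpartial} to identify the resulting evaluations with the coefficients $c_{q,r}^i(a)$ appearing in Definition \ref{factor}, the identity is then reduced to the integral expression (\ref{e4.1}) of Proposition \ref{T4.6}.

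The main obstacle will be the bookkeeping that produces both the factor $DwW(\psi^{-1})/[(-1)^{\ell+q+r} 2 \lambda_\gf]$ and the Euler factors $Z_\gp(\psi)$. When every $r_\gp \geqslant 1$, the sum over cosets modulo $p^n$ can be replaced by the sum over cosets modulo $\gf$ by applying a suitable power of $U_\gp$ to $\Psi_{\mathcal{F}}$; the eigenvalue relation then contributes the denominator $\lambda_\gf$ and matches the $\psi_\gf(a)$ weighting in (\ref{e4.1}). When some $r_\gp = 0$, one partitions the cosets modulo $p^n$ according to divisibility by $\gp$ and uses $U_\gp \Psi_{\mathcal{F}} = \lambda_\gp \Psi_{\mathcal{F}}$ to produce the Euler factor $1 - [\lambda_\gp \psi(\gp)]^{-1}$; this mirrors the standard removal-of-Euler-factor technique used in \cite{pollack2011} and \cite{chris2017}. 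The delicate point is checking that all intermediate cusps arising in these manipulations remain inside the sets $C_i$ so that Proposition \ref{T4.6} applies; this is exactly what Lemma \ref{actionHeckeoperator} and the choice $C_i = \Gamma_i(\gm)\infty \cup \Gamma_i(\gm) 0$ ensure.

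Finally, admissibility follows directly from Proposition \ref{admissibility} applied to $\Psi_{\mathcal{F}}$: the distributions $\mu_i(\Psi_i) = \Psi_i(\{0\}-\{\infty\})|_{\tensorspace^\times}$ are $(h_\gp)_{\gp|p}$-admissible, and this growth transfers to $\mathrm{Mel}(\Psi_{\mathcal{F}})$ under the identification of $\mathrm{Cl}_K^i(p^\infty)$ with $\tensorspace^\times/\mathcal{O}_K^\times$. Uniqueness is then a standard consequence of admissibility: a distribution on $\mathrm{Cl}_K(p^\infty)$ with the prescribed growth is determined by its values on locally algebraic characters of critical infinity type, which is precisely what (\ref{interpolationprop}) specifies.
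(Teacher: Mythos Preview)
Your proposal is correct and follows essentially the same route as the paper: attach $\phi_{\mathcal{F}}$ via Proposition~\ref{attach partial}, transport to $\overline{\Q}_p$ via $\iota$, lift through the control theorem (Proposition~\ref{partialcontrol}), take the Mellin transform, and read off admissibility from Proposition~\ref{admissibility}. In fact you supply more detail on the interpolation computation than the paper does; the paper simply cites Remark~\ref{Remarkevaluarpartial} and defers the bookkeeping (Euler factors, $\lambda_\gf$, Gauss sums) to \cite{chris2017}. One small point worth making explicit: after applying $\iota$ the values of $\phi_{\mathcal{F}}^\iota$ a priori lie only in $V_{k,\ell}^*(\overline{\Q}_p)$, and it is Lemma~\ref{fingen} (finite generation of $\Delta_{C_i}^0$ over $\Z[\Gamma_i(\gn)]$) that forces them into $V_{k,\ell}^*(L)$ for some finite $L/\Q_p$, which you need before invoking the control theorem.
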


\begin{proof}
The small slope $C$-cuspidal Bianchi modular form $\mathcal{F}$ corresponds to a collection of $C_i$-cuspidal forms $f^1,...,f^h$ on $\mathcal{H}_3$. 

We first attach to $\mathcal{F}$ a complex-valued partial Bianchi modular eigensymbol $\phi_\mathcal{F}=(\phi_{f^{1}},...,\phi_{f^{h}})$ using Proposition  \ref{attach partial}. 

By applying $\iota$, we obtain from $\phi_{\mathcal{F}}$ a symbol $\phi^{\iota}_{\mathcal{F}}=(\phi^{\iota}_{f^{1}},...,\phi^{\iota}_{f^{h}})$ with values in $V_{k,\ell}^*(\overline{\mathbb{Q}}_p)$. Since by Lemma \ref{fingen}, $\Delta_{C_i}^0$ is finitely generated as a $\mathbb{Z}[\Gamma_0^i(\gn)]$-module for each $i$, it follows that for all $i$, $\phi^{\iota}_{f^i}$ has values in $V_{k,\ell}^*(L)$ for a sufficiently large $L/\mathbb{Q}_p$ finite (containing the eigenvalues of $\mathcal{F}$), thus $\phi^{\iota}_{\mathcal{F}}\in \mathrm{Symb}_{K_0(\gn),C}(V_{k,\ell}^*(L))$.

Since $\mathcal{F}$ has small slope, we can lift $\phi^{\iota}_{\mathcal{F}}$ to a unique $\Psi^{\iota}_{\mathcal{F}}\in \mathrm{Symb}_{K_0(\gn),C}(\mathcal{D}_{k,\ell}(L))$ using Proposition \ref{partialcontrol}.

Finally, we define the $p$-adic $L$-function of $\mathcal{F}$ as the Mellin transform of $\Psi^{\iota}_{\mathcal{F}}$
\begin{equation*}
L_p^{\iota}(\mathcal{F},-):=\mathrm{Mel}(\Psi^{\iota}_\mathcal{F}).  
\end{equation*}

The interpolation property in (\ref{interpolationprop}) comes from the link between partial Bianchi modular symbols and $\Lambda(\mathcal{F},-)$ in Remark \ref{Remarkevaluarpartial}. Additionally, by Proposition \ref{admissibility} the distribution $L_p^{\iota}(\mathcal{F},-)$ is $(h_\mathfrak{p})_{\mathfrak{p}|p}$-admissible, where $h_\mathfrak{p}=v_p(\lambda_\mathfrak{p})$. Both interpolation and growth properties give the uniqueness of $L_p^{\iota}(\mathcal{F},-)$.
\end{proof}

\begin{remark}{\label{turn-C-cuspi}}
i) Note that in the above theorem we define the $p$-adic avatar $\psi_{p-\mathrm{fin}}$, using our fixed embeddings as in \cite[pag. 190]{hida1993anti} (although here we could use $\iota$ aswell). This fact is important in Theorem \ref{Cor8.3}, since we will not use an isomorphism $\iota$.

ii) In general, non-cuspidal Bianchi modular forms do not need to be $C$-cuspidal. Whilst it is not computed in this article, the author believes that we can consider a linear combination as in \cite[\S 6.1]{bellaiche2015p}, to turn a non-cuspidal Bianchi modular form into $C$-cuspidal and then construct its $p$-adic $L$-function. This relies on the fact that, for example, in the parallel weight case, in the Fourier expansion of a Bianchi modular form $\mathcal{F}=\sum_{n=0}^{2k+2}\mathcal{F}_nX^{2k+2-n}Y^n$, only the constant term of $\mathcal{F}_0$, $\mathcal{F}_{k+1}$ and $\mathcal{F}_{2k+2}$ can be non-trivial (see part ii) in Remark \ref{Remark1.8}) and to achieve $C$-cuspidality we just need to control the constant term of $\mathcal{F}_{k+1}$ at the cusps $0$ and $\infty$. Note that in the case of non-parallel weight $(k,\ell)$, we need to control the constant terms of $\mathcal{F}_{k+1}$ and $\mathcal{F}_{\ell+1}$ both at the cusps 0 and $\infty$.
\end{remark} 

\section{\texorpdfstring{$p$}{Lg}-adic \texorpdfstring{$L$}{Lg}-function of non-cuspidal base change Bianchi modular forms}

Throughout this section, suppose $p$ splits in $K$ as $\gp\overline{\gp}$ with $\gp$ being the prime corresponding to the embedding $\iota_p$ from Section \ref{notations}. 

In this section, we prove algebraicity of the critical $L$-values of a non-cuspidal base change Bianchi modular form $\mathcal{F}$, we attach a complex partial Bianchi modular symbol to $\mathcal{F}$ and view it $p$-adically without using an isomorphism between $\mathbb{C}$ and $\overline{\mathbb{Q}}_p$. Furthermore, we factor the $p$-adic $L$-function of $\mathcal{F}$ as a product of two Katz $p$-adic $L$-functions.

\subsection{Base change and \texorpdfstring{$C$}{Lg}-cuspidality}\label{subsection basechange}

Let $\varphi$ be a Hecke character of $K$ of conductor $\gM$ coprime to $p$ and infinity type $(-k-1,0)$ with $k\geqslant0$. Let $\theta_{\varphi}$ be the theta series associated to $\varphi$, which is a newform of weight $k+2$, level $\Gamma_0(D_KN(\gM))$ and nebentypus $\epsilon_{\theta_{\varphi}}=\chi_K\varphi_{\mathbb{Z}}$ where $\chi_K$ is the quadratic character of $K/\Q$ and $\varphi_{\mathbb{Z}}$ is the Dirichlet character modulo $M=N(\gM)$ given by $\varphi_{\mathbb{Z}}(a)=\varphi(a\mathcal{O}_K)a^{-k-1}$, for an integer $a$ coprime to $M$.

Let $\pi$ be the automorphic representation of $\mathrm{GL}_2(\mathbb{A}_{\mathbb{Q}})$ generated by $\theta_{\varphi}$ and let BC($\pi$) be the base change of $\pi$ to $\mathrm{GL}_2(\mathbb{A}_K)$ (see \cite{langlands1980base}). The \textit{base change} of $\theta_{\varphi}$ to $K$ is the normalized new vector $\theta_{\varphi/K}$ in BC($\pi$) which is a non-cuspidal Bianchi modular form of weight $(k, k)$, level $K_0(\gm)$ with $M\mathcal{O}_K|\gm|D_KM\mathcal{O}_K$ (see \cite[\S 2.3]{friedberg1983imaginary}) and nebentypus $\epsilon_{\theta_{\varphi/K}}=\epsilon_{\theta_{\varphi}}\circ N$.

\begin{remark}\label{remark6.3}
The Hecke eigenvalues of $\theta_{\varphi/K}$ can be described in terms of the eigenvalues $a_q$ of $\theta_{\varphi}$, which are described in turn in terms of $\varphi$: for every prime $\gq$ of $K$ above $q$,
$$
a_{\gq}=\begin{cases}
a_q=\varphi(\gq)+\varphi(\overline{\gq}), & \text{if $q= \gq\overline{\gq}$}\\
a_q=\varphi(\gq), & \text{if $q$ ramifies}\\
a_q^2-2\chi_K(q)\varphi_{\mathbb{Z}}(q)q^{k+1}=2\varphi(\gq) & \text{if $q$ is inert.}
\end{cases}
$$
\end{remark}

Note that the Bianchi modular form $\theta_{\varphi/K}$ has level $\gm$ coprime to $p$, however, to construct its $p$-adic $L$-function we need $p$ in the level, then, we consider its $p$-stabilizations. We are interested in the $p$-stabilization satisfying the small slope condition of Definition \ref{D10.4}, i.e., the $p$-adic valuation of the eigenvalues of $U_\gp$ and $U_{\overline{\gp}}$ are both less than $k+1$. 

The $p$-stabilizations of $\theta_{\varphi/K}$ can be explicitly described from the $p$-stabilizations of $\theta_{\varphi}$ by considering its Hecke polynomial at $p$ given by $x^2-a_px+ \epsilon_{\theta_{\varphi}}(p)p^{k+1}$. Since $p=\gp\overline{\gp}$, we have that
$a_p=\varphi(\gp)+\varphi(\overline{\gp})$ and $\epsilon_{\theta_{\varphi}}(p)=\chi_K(p)\varphi_{\mathbb{Z}}(p)=\varphi(p\mathcal{O}_K)/p^{k+1}=\varphi(\gp\overline{\gp})/p^{k+1}$,
thus the roots of the Hecke polynomial of $\theta_{\varphi}$ at $p$ are $\alpha=\varphi(\overline{\gp})$ and $\beta=\varphi(\gp)$. 

Consider the Hecke polynomial of $\theta_{\varphi/K}$ at $\gq|p$ given by $x^2-a_{\gq}x+ \epsilon_{\theta_{\varphi/K}}(\gq)N(\gq)^{k+1}$. Since
$a_\gq=a_p$ by Remark \ref{remark6.3}, $N(\gq)=p$ and $\epsilon_{\theta_{\varphi/K}}(\gq)=\chi_K(N(\gq))\varphi_{\mathbb{Z}}(N(\gq))=\chi_K(p)\varphi_{\mathbb{Z}}(p)=\epsilon_{\theta_{\varphi}}(p)$; then the Hecke polynomial of $\theta_{\varphi/K}$ at each $\gq|p$ and the Hecke polynomial of $\theta_{\varphi}$ at $p$ are equal. 

Therefore, we can take the roots of the Hecke polynomial of $\theta_{\varphi/K}$ at $\gp$ and at $\overline{\gp}$, to be $\alpha_\gp=\alpha_{\overline{\gp}}=\alpha=\varphi(\overline{\gp})$ and $\beta_\gp=\beta_{\overline{\gp}}=\beta=\varphi(\gp)$, respectively.

If $\theta_{\varphi}^{\alpha}$ (resp. $\theta_{\varphi}^{\beta}$) is the $p$-stabilization of $\theta_{\varphi}$ corresponding to $\alpha$ (resp. $\beta$), we define its base change to $K$ to be the $p$-stabilization $\theta_{\varphi/K}^{\alpha\alpha}$ (resp. $\theta_{\varphi/K}^{\beta\beta}$) of $\theta_{\varphi/K}$ corresponding to $\alpha_\gp$, $\alpha_{\overline{\gp}}$ (resp. $\beta_\gp$, $\beta_{\overline{\gp}}$).

\begin{remark}{\label{basechangepstabilization}}
Note that $v_p(\alpha_\gp)=v_p(\alpha_{\overline{\gp}})=v_p(\varphi(\overline{\gp}))=0$ and $v_p(\beta_\gp)=v_p(\beta_{\overline{\gp}})=v_p(\varphi(\gp))=k+1 $. Since we are interested in the small slope $p$-stabilization of $\theta_{\varphi/K}$, henceforth we will work with $\theta_{\varphi/K}^p:=\theta_{\varphi/K}^{\alpha\alpha}$.
\end{remark}

Recall the level of $\theta_{\varphi/K}^p$ given by $K_0(\gn)$ with $\gn=(p)\gm$ and consider $C=(C_1,...,C_h)$ with $C_i=\Gamma_0^i(\gm)\infty\cup\Gamma_0^i(\gm)0$ as in Section \ref{Fourier}. 

The following proposition is the analog of \cite[Def 6.2]{bellaiche2015p}, however, in our case we do not need to produce a linear combination, since we can take advantage of the nice vanishing properties of $\theta_{\varphi/K}$.

\begin{proposition}{\label{basechangequasi}}
The Bianchi modular form $\theta_{\varphi/K}^p$ is $C$-cuspidal.
\end{proposition}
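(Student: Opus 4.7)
The plan is to reduce $C$-cuspidality first to a single Fourier component, then to the two representative cusps $\{0,\infty\}$, and finally to an explicit cancellation arising from the Eisenstein nature of the base change.

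First, since $f_{\varphi/K}^p$ has parallel weight $(k,k)$, part ii) of Remark \ref{Remark1.8} tells us that the only Fourier components of a Bianchi modular form whose constant terms can possibly be non-trivial are those indexed by $n\in\{0,k+1,2k+2\}$. Of these, only $n=k+1$ lies in the quasi-vanishing range $1\leqslant n\leqslant 2k+1$ of Definition \ref{Def 2.9}. Thus it suffices to show that for every $i$ and every $\sigma\in\mathrm{GL}_2(K)$ sending $\infty$ to a cusp of $C_i$, the constant term of $(f^{i}|_\sigma)_{k+1}$ vanishes, where $f^i$ is the $i$-th descent of $f_{\varphi/K}^p$.

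Next I would reduce to checking just $0$ and $\infty$. Since $C_i=\Gamma_i(\gm)\infty\cup\Gamma_i(\gm)0$ and the unstabilized form $f_{\varphi/K}$ has level $K_0(\gm)$, each of its descents satisfies the automorphy relation (\ref{e2.4}) under the \emph{larger} group $\Gamma_i(\gm)$. Writing the $\beta\beta$-stabilization as
\begin{equation*}
f_{\varphi/K}^p=(1-\alpha_\gp V_\gp)(1-\alpha_{\overline{\gp}}V_{\overline{\gp}})f_{\varphi/K},
\end{equation*}
the operators $V_\gp$ and $V_{\overline{\gp}}$ interact with the $\Gamma_i(\gm)$-action in a controlled way via the nebentype $\epsilon_{f_{\varphi/K}}$. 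Consequently, vanishing of the $(k+1)$-constant term of $f_{\varphi/K}^p$ at the two representative cusps $\infty$ and $0$ automatically propagates to the full $\Gamma_i(\gm)$-orbits, in the same spirit as the orbit argument in Proposition \ref{proposition1.3}.

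The technical heart is then the cancellation at $\{0,\infty\}$. The Eisenstein-type coefficients $c_j(t_i\mathcal{D},f_{\varphi/K})$ with $j=(\pm(k+1),\pm(k+1))$ controlling the $(k+1)$-constant term at $\infty$ can be read off from the base-change structure of $f_{\varphi/K}$: its non-cuspidal part is an Eisenstein series attached to the pair $(\varphi,\varphi^c)$, reflecting the $L$-function factorization of Lemma \ref{Prop 9.1}. The operator $V_\gp$ multiplies these constants by an explicit factor involving $\varphi_\gp$, and symmetrically for $V_{\overline{\gp}}$; the specific choice $\alpha_\gp=\alpha_{\overline{\gp}}=\varphi(\gp)$ is designed precisely so that the product $(1-\alpha_\gp V_\gp)(1-\alpha_{\overline{\gp}}V_{\overline{\gp}})$ annihilates them at $\infty$. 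The cusp $0$ is handled symmetrically after conjugation by the Fricke involution, which swaps $0\leftrightarrow\infty$ and exchanges the roles of $\varphi$ and $\varphi^c$, so the same cancellation applies.

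The main obstacle is the explicit bookkeeping in the last paragraph: computing the Eisenstein constants of the base-change form and tracking the precise effect of $V_\gp,V_{\overline{\gp}}$ on them. A cleaner alternative is the automorphic one: the $\beta\beta$-stabilization singles out a specific Iwahori-fixed vector in the local principal series $\pi_p\cong\chi_\gp\otimes\chi_{\overline{\gp}}$ at $p$, and the parabolic constant-term functionals attached to the two standard cusps vanish on that vector. This is the Bianchi analog of the classical fact that the ordinary $p$-stabilization of an Eisenstein series lies in the kernel of the constant-term map at the cusps responsible for the non-cuspidality, and it would bypass the direct Fourier-coefficient calculation entirely.
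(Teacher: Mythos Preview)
Your reduction to the single component $n=k+1$ is the right first move and matches the paper. After that, however, the paper's argument and yours diverge in a way that matters.

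The paper observes that the \emph{unstabilized} base change $f_{\varphi/K}$ already quasi-vanishes at $\infty$: by \cite[Thm~3.1]{friedberg1983imaginary} the $(k+1)$-constant term of $f_{\varphi/K}$ is zero before any stabilization is applied. (This is precisely the ``nice vanishing property'' alluded to just before the proposition, and is why no Bella\"iche--Dasgupta linear combination is needed here.) The cusp $0$ is then obtained by applying the Fricke involution $W_{\gm}$ at level $\gm$, for which $f_{\varphi/K}$ is an eigenform with eigenvalue $\pm1$; quasi-vanishing at $\infty$ transports directly to $0$. Finally, since $C_i$ is stable under the matrices $\smallmatrixx{\pi_\gp}{0}{0}{1}$ and $\smallmatrixx{\pi_{\overline{\gp}}}{0}{0}{1}$ used in the $p$-stabilization, the stabilized form $f_{\varphi/K}^p$ inherits quasi-vanishing on all of $C_i$.

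Your argument instead tries to make the stabilization itself responsible for killing a non-zero constant term, and this is where the gap lies. First, the cancellation you describe at $\infty$ is unnecessary: the term is already zero. Second, and more seriously, your passage to the cusp $0$ via Fricke does not work as stated. You are applying $W_{\gn}$ (level $\gn=(p)\gm$) to the \emph{stabilized} form; but Fricke at level $\gn$ intertwines the $\beta\beta$-stabilization with a different stabilization (essentially swapping the roles of $\alpha$ and $\beta$ at each prime above $p$), so ``the same cancellation'' does not carry over. In the classical analogue this is exactly why a single ordinary $p$-stabilization of an Eisenstein series is cuspidal at one of $\{0,\infty\}$ but not both. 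The paper avoids this by applying Fricke at level $\gm$ to the unstabilized form, where it genuinely is an eigenvector, and only stabilizing afterwards.

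So the missing ingredient in your approach is the input from \cite{friedberg1983imaginary} that the middle constant term of $f_{\varphi/K}$ vanishes outright; once you have that, the stabilization step is just stability of $C_i$, not a cancellation.
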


\begin{proof}
Suppose that $K$ has class number $1$. By part ii) of Remark \ref{Remark1.8}, the constant term of the Fourier expansion at $\infty$ of $\theta_{\varphi/K}=(f_0,...,f_n,...,f_{2k+2})$ is trivial for $n\notin\{0,k+1,2k+2\}$ and by \cite[Thm 3.1]{friedberg1983imaginary}, we also have that the constant Fourier coefficient of $f_{k+1}$ is trivial and therefore $\theta_{\varphi/K}$ quasi-vanishes at $\infty$. Likewise, since $\theta_{\varphi/K}$ is an eigenform for the Hecke operators, then it is also an eigenform for the Fricke involution $W_\gm$, with eigenvalue $\pm1$ (see \cite[\S2]{cremona1994}, \cite[\S 2.4]{palacios}). Note that $W_\gm$ sends the cusp $\infty$ to the cusp $0$, then $\theta_{\varphi/K}$ quasi-vanishes at $0$, therefore $\theta_{\varphi/K}$ quasi-vanishes at $C$. To conclude that $\theta_{\varphi/K}^p$ is $C$-cuspidal, we notice that $C$ is stable under multiplication by the matrix $\smallmatrixx{\varpi_\gp}{0}{0}{1}$ (resp. $\smallmatrixx{\overline{\varpi}_\gp}{0}{0}{1}$) used in the $\gp$-stabilization (resp. $\overline{\gp}$-stabilization) of $\theta_{\varphi/K}$.

When $K$ has a higher class number, we proceed similarly as above and obtain that $\theta_{\varphi/K}^i$ quasi-vanishes at $C_i$ for each $i$. Then we note that the set
of cusps $C_i$
is stable under multiplication by the matrix ${\smallmatrixx{\alpha_i}{0}{0}{1}}$ with $\gp I_i =(\alpha_i)I_{j_i}$
(resp. ${\smallmatrixx{\beta_i}{0}{0}{1}}$ with $\overline{\gp} I_i =(\beta_i)I_{j_i}$) appearing in the $i$-component of the $\gp$-stabilization (resp. $\overline{\gp}$-stabilization)
of $\theta_{\varphi/K}$. 
\end{proof}

\subsection{\texorpdfstring{$L$}{Lg}-function}{\label{basechangeLfunction}}

The fact that $\theta_{\varphi/K}$ is constructed from a Hecke character, allows us to relate its complex $L$-function with Hecke $L$-functions and prove algebraicity of critical $L$-values of $\theta_{\varphi/K}^p$. 

\begin{lemma}\label{Prop 9.1}
Let $\psi$ be a Hecke character of $K$ and let $\psi^c(\gq):=\psi(\overline{\gq})$ where $\gq$ is an ideal of $K$ and $\overline{\gq}$ is its conjugate ideal. Then 
\begin{equation*}
L(\theta_{\varphi/K},\psi,s)=L(\varphi^c\psi,s)L(\varphi^c\psi^c\lambda_K,s)=L(\varphi^c\psi\lambda_K,s)L(\varphi^c\psi^c,s)    
\end{equation*}
where $\lambda_K=\chi_{K}\circ N$.
\end{lemma}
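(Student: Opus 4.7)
My plan is to prove both equalities by comparing Euler factors at each prime $\mathfrak{q}$ of $\mathcal{O}_K$, using the explicit Hecke eigenvalues from Remark \ref{remark6.3} and the nebentypus identity $\epsilon_{f_{\varphi/K}} = \epsilon_{f_\varphi}\circ N_{K/\mathbb{Q}}$ with $\epsilon_{f_\varphi} = \chi_K\varphi_\mathbb{Z}$. At an unramified prime $\mathfrak{q}$ (coprime to the levels and to the conductor of $\psi$), the local Euler factor of $L(f_{\varphi/K},\psi,s)$ has the standard form
\begin{equation*}
L_\mathfrak{q}(f_{\varphi/K},\psi,s) = \left(1 - a_\mathfrak{q}\psi(\mathfrak{q})N(\mathfrak{q})^{-s} + \epsilon_{f_{\varphi/K}}(\mathfrak{q})N(\mathfrak{q})^{k+1}\psi(\mathfrak{q})^2 N(\mathfrak{q})^{-2s}\right)^{-1}.
\end{equation*}
Substituting the formulas from Remark \ref{remark6.3}, I verify: for $q=\mathfrak{q}\overline{\mathfrak{q}}$ split, $a_\mathfrak{q}=\varphi(\mathfrak{q})+\varphi(\overline{\mathfrak{q}})$ and $\epsilon_{f_{\varphi/K}}(\mathfrak{q})N(\mathfrak{q})^{k+1}=\varphi(\mathfrak{q})\varphi(\overline{\mathfrak{q}})$, so the local factor factors as $(1-\varphi(\mathfrak{q})\psi(\mathfrak{q})N(\mathfrak{q})^{-s})^{-1}(1-\varphi(\overline{\mathfrak{q}})\psi(\mathfrak{q})N(\mathfrak{q})^{-s})^{-1}$; for $q$ inert (so $N(\mathfrak{q})=q^2$), $a_\mathfrak{q}=2\varphi(\mathfrak{q})$ and $\epsilon_{f_{\varphi/K}}(\mathfrak{q})N(\mathfrak{q})^{k+1}=\varphi(\mathfrak{q})^2$, producing the perfect square $(1-\varphi(\mathfrak{q})\psi(\mathfrak{q})N(\mathfrak{q})^{-s})^{-2}$; for $q=\mathfrak{q}^2$ ramified, $\epsilon_{f_{\varphi/K}}(\mathfrak{q})$ vanishes and only the linear factor $(1-\varphi(\mathfrak{q})\psi(\mathfrak{q})N(\mathfrak{q})^{-s})^{-1}$ survives.

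To recognize this as $L(\varphi^c\psi,s)L(\varphi^c\psi^c\lambda_K,s)$, I group the contributions over each rational prime $q$ and apply the identities $\varphi^c(\mathfrak{q})=\varphi(\overline{\mathfrak{q}})$, $\psi^c(\mathfrak{q})=\psi(\overline{\mathfrak{q}})$, and $\lambda_K(\mathfrak{q})=\chi_K(N(\mathfrak{q}))$. In the split case, the four roots $\varphi(\mathfrak{q})\psi(\mathfrak{q})$, $\varphi(\overline{\mathfrak{q}})\psi(\mathfrak{q})$, $\varphi(\mathfrak{q})\psi(\overline{\mathfrak{q}})$, $\varphi(\overline{\mathfrak{q}})\psi(\overline{\mathfrak{q}})$ produced by combining the factors at $\mathfrak{q}$ and $\overline{\mathfrak{q}}$ of the left-hand side are exactly reproduced by the four Euler factors at $\mathfrak{q},\overline{\mathfrak{q}}$ of $L(\varphi^c\psi,s)$ and $L(\varphi^c\psi^c\lambda_K,s)$ (the latter using $\lambda_K(\mathfrak{q})=1$). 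In the inert case $\overline{\mathfrak{q}}=\mathfrak{q}$, so $\varphi^c(\mathfrak{q})=\varphi(\mathfrak{q})$, $\psi^c(\mathfrak{q})=\psi(\mathfrak{q})$, $\lambda_K(\mathfrak{q})=1$, and the two right-hand factors each contribute the same linear term whose product recovers the required square. The ramified case is similar: $\lambda_K(\mathfrak{q})=0$ kills the $L(\varphi^c\psi^c\lambda_K,s)$ factor, while the remaining $L_\mathfrak{q}(\varphi^c\psi,s)=(1-\varphi(\mathfrak{q})\psi(\mathfrak{q})N(\mathfrak{q})^{-s})^{-1}$ matches the left-hand side.

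For the second equality I again compare Euler factors at each $\mathfrak{q}$. At unramified $\mathfrak{q}$, $\lambda_K(\mathfrak{q})=1$, so multiplying by $\lambda_K$ leaves every local factor unchanged, and both products agree at $\mathfrak{q}$ trivially. At ramified $\mathfrak{q}$, $\lambda_K(\mathfrak{q})=0$ forces both $\lambda_K$-twisted factors to be trivial, and the surviving factors $L_\mathfrak{q}(\varphi^c\psi,s)$ on the left and $L_\mathfrak{q}(\varphi^c\psi^c,s)$ on the right coincide because $\overline{\mathfrak{q}}=\mathfrak{q}$ implies $\psi(\mathfrak{q})=\psi^c(\mathfrak{q})$. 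The principal obstacle is the split case of the first equality: the local factor of $L(f_{\varphi/K},\psi,s)$ at a single prime $\mathfrak{q}$ does not by itself match a product of local factors from the right-hand side, and the equality only becomes visible after combining the contributions from $\mathfrak{q}$ and $\overline{\mathfrak{q}}$. Keeping track carefully of the interchange between $\varphi$ and $\varphi^c$, and between $\psi$ and $\psi^c$, across the two primes above a split rational prime is where the main bookkeeping lies.
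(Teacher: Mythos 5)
Your proposal is correct and takes essentially the same route as the paper: the paper's proof is precisely a comparison of Euler factors on both sides using the eigenvalue formulas of Remark \ref{remark6.3}, which you carry out explicitly (including the correct identification $\epsilon_{f_{\varphi/K}}(\gq)N(\gq)^{k+1}=\varphi(\gq)\varphi(\overline{\gq})$, resp.\ $\varphi(\gq)^2$, and the split/inert/ramified case analysis). The only difference is that you spell out the bookkeeping the paper leaves implicit.
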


\begin{proof}
This follows by comparing the Euler factors on both sides, using Remark \ref{remark6.3}.
\end{proof}

\begin{remark}
There are 6 more ways to factor $L(\theta_{\varphi/K},\psi,s)$ as product of two Hecke $L$-functions, this comes from the fact that for a Hecke character $\nu$ we have $L(\nu,s)=L(\nu^c,s)$. Similar factorizations in the $p$-adic setting do not necessarily hold, because if $\nu$ has infinity type $(q,r)$ the involution $\nu \rightarrow \nu^c$
corresponds to the map $(q,r)\rightarrow(r,q)$ on weight space and therefore does not preserve the lower right quadrant of weights of Hecke characters that lie in the range
of classical interpolation of the Katz $p$-adic $L$-functions (see \cite[Figure 1]{bertolini2012p} and Theorem \ref{Katztheorem}).
\end{remark}

We now prove algebraicity of the critical $L$-values of $\theta_{\varphi/K}$. For this, we first follow \cite[\S 2.3]{berger} to recall algebraicity of the critical $L$-value $L(\chi,0)$, of a Hecke character $\chi$ of infinity type $(a,b)$ with $a, b \in \Z$ satisfying $a> 0\geqslant b$.

Let $E$ be an elliptic curve with complex multiplication by $\roi_K$ and $\omega$ its Néron differential. Suppose also that $E$ has good reduction at the place above $p$ and $\overline{\omega}$ is a non-vanishing invariant differential on the reduced curve $\overline{E}$, then Damerell showed that $\pi^{-b}\Omega_\infty^{-a+b}L(\chi,0)\in\overline{\Q}$, where $\Omega_\infty\in \C$ is the complex period of $\omega$.

\begin{definition}{\label{periodberger}}
Define the complex period $\Omega_{\theta_{\varphi/K}}\in\mathbb{C}^\times$ attached to $\theta_{\varphi/K}$ to be 
\begin{equation*}
\Omega_{\theta_{\varphi/K}}=\left(\frac{\Omega_\infty}{\pi }\right)^{2k+2}    
\end{equation*}
\end{definition}

\begin{proposition}{\label{period base change}}
For all Hecke character $\psi$ of $K$ with infinity type $(q,r)$ with $q,r\in\mathbb{Z}$ satisfying $0\leqslant q,r \leqslant k$, we have $\Lambda(\theta_{\varphi/K},\psi)/\Omega_{\theta_{\varphi/K}}\in \overline{\mathbb{Q}}$.
\end{proposition}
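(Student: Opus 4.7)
The plan is to combine the $L$-function factorization of Lemma \ref{Prop 9.1} with the classical algebraicity theorem of Damerell recalled just before Definition \ref{periodberger}.

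First, I would specialize Lemma \ref{Prop 9.1} to $s=1$, obtaining
\begin{equation*}
L(f_{\varphi/K}, \psi, 1) = L(\varphi^c \psi, 1)\cdot L(\varphi^c \psi^c \lambda_K, 1).
\end{equation*}
Next, I compute the infinity types of the two factors. Since $\varphi$ has infinity type $(-k-1, 0)$, its conjugate $\varphi^c$ has infinity type $(0, -k-1)$; because $\lambda_K = \chi_K\circ N$ is of finite order, the factor $\varphi^c\psi$ has infinity type $(q, r-k-1)$ and the factor $\varphi^c\psi^c\lambda_K$ has infinity type $(r, q-k-1)$. The hypothesis $0 \leqslant q,r \leqslant k$ ensures that in each case the first coordinate is non-negative while the second lies in $[-k-1,-1]$, so both characters fall in the critical range required by Damerell's theorem (after the standard norm twist converting $L(\chi,1)$ into a value of the form $L(\chi',0)$).

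Then I would apply Damerell's theorem to each factor individually. This yields explicit monomials $\pi^{\alpha_i}\Omega_\infty^{\beta_i}$, $i=1,2$, such that both quotients $L(\varphi^c\psi,1)/\pi^{\alpha_1}\Omega_\infty^{\beta_1}$ and $L(\varphi^c\psi^c\lambda_K,1)/\pi^{\alpha_2}\Omega_\infty^{\beta_2}$ lie in $\overline{\Q}$. Direct bookkeeping then shows $\beta_1+\beta_2 = 2(k+1)$, while the combined $\pi$-exponent $\alpha_1+\alpha_2$ together with $(2\pi i)^{q+r+2}$ is exactly cancelled by the (algebraic) factor $\Gamma(q+1)\Gamma(r+1)$ in the definition of $\Lambda(f_{\varphi/K},\psi)$. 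Comparing with $\Omega_{f_{\varphi/K}} = (\Omega_\infty/\pi)^{2k+2}$ yields the desired algebraicity of $\Lambda(f_{\varphi/K},\psi)/\Omega_{f_{\varphi/K}}$.

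The main obstacle will be the careful bookkeeping of the $\pi$- and $\Omega_\infty$-exponents after the shift from $s=1$ to $s=0$ and the verification that they assemble correctly into $\Omega_{f_{\varphi/K}}$. A secondary subtlety arises in the boundary cases $q=0$ or $r=0$, where the first coordinate of an infinity type vanishes and the strict inequality $a>0\geqslant b$ in Damerell's theorem is not quite met; in those cases I would invoke the functional equation of the relevant Hecke $L$-function, or use one of the alternative factorizations noted in the remark following Lemma \ref{Prop 9.1}, to reduce to a character strictly in the critical range.
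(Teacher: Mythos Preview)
Your approach is essentially identical to the paper's: factor via Lemma~\ref{Prop 9.1}, apply Damerell to each Hecke $L$-value, and track the $\pi$- and $\Omega_\infty$-exponents. The one place where the paper is cleaner is that it performs the norm shift $L(\chi,1)=L(\chi|\cdot|_{\mathbb{A}_K},0)$ \emph{before} reading off infinity types, obtaining $(q+1,r-k)$ and $(r+1,q-k)$ directly. Since $q,r\geqslant 0$, the first coordinates are then $\geqslant 1$, so Damerell's hypothesis $a>0\geqslant b$ holds without exception. Your worry about the boundary cases $q=0$ or $r=0$ is therefore a phantom: it arises only because you checked the strict inequality against the infinity type of $\varphi^c\psi$ at $s=1$ rather than that of $\varphi^c\psi|\cdot|_{\mathbb{A}_K}$ at $s=0$. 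No functional equation or alternative factorization is needed.
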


\begin{proof}
By Lemma \ref{Prop 9.1} we have
\begin{equation*}
L(\theta_{\varphi/K},\psi,1)=L(\varphi^c\psi,1)L(\varphi^c\psi^c\lambda_K,1)=L(\varphi^c\psi|\cdot|_{\mathbb{A}_K},0)L(\varphi^c\psi^c\lambda_K|\cdot|_{\mathbb{A}_K},0).    
\end{equation*} 

Note that $\varphi^c\psi|\cdot|_{\mathbb{A}_K}$ has infinity type $(q+1,r-k)$ with $q+1>0\geqslant r-k$ and $\varphi^c\psi^c\lambda_K|\cdot|_{\mathbb{A}_K}$ has infinity type $(r+1,q-k)$ with $r+1>0\geqslant q-k$, therefore
\begin{equation*}
\frac{L(\varphi^c\psi|\cdot|_{\mathbb{A}_K},0)}{\pi ^{r-k}\Omega_\infty^{q+1-r+k}}\in\overline{\mathbb{Q}}\;\;\;\;\textrm{and}\;\;\;\;
\frac{L(\varphi^c\psi^c\lambda_K|\cdot|_{\mathbb{A}_K},0)}{\pi^{q-k}\Omega_\infty^{r+1-q+k}}\in\overline{\mathbb{Q}}.
\end{equation*}
The result follows by multiplying the two numbers above and noting that 
\begin{equation*}
\frac{L(\theta_{\varphi/K},\psi,1)}{\pi^{q+r-2k}\Omega_\infty^{2k+2}}=\frac{(2i)^{q+r+2}}{q!r!}\frac{\Lambda(\theta_{\varphi/K},\psi)}{\Omega_{\theta_{\varphi/K}}}.
\end{equation*}
\end{proof}
Suppose $\theta_{\varphi/K}$ does not have level at $p$ and consider its $p$-stabilization $\theta_{\varphi/K}^p$ as in Remark \ref{basechangepstabilization}.

\begin{corollary}{\label{lemma6.9}}
For all  Hecke character $\psi$ of $K$ with infinity type $(q,r)$ satisfying $0\leqslant q,r \leqslant k$, we have $\Lambda(\theta_{\varphi/K}^p,\psi)/\Omega_{\theta_{\varphi/K}}\in \overline{\mathbb{Q}}$.
\end{corollary}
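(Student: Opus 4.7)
The plan is to deduce the corollary from Proposition \ref{period base change} by showing that $\Lambda(f_{\varphi/K}^p,\psi)$ and $\Lambda(f_{\varphi/K},\psi)$ differ by an algebraic multiplicative factor at each critical $\psi$. Since the completed $L$-function is obtained from $L(\,\cdot\,,\psi,1)$ by multiplying by $\Gamma(q+1)\Gamma(r+1)/[(2\pi i)^{q+1}(2\pi i)^{r+1}]$, a normalization that depends only on $\psi$, it suffices to control the ratio $L(f_{\varphi/K}^p,\psi,1)/L(f_{\varphi/K},\psi,1)$.

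First I would recall the classical effect of $p$-stabilization on Euler factors. Since $f_{\varphi/K}$ has level $\gm$ coprime to $p$, the Euler factors of $L(f_{\varphi/K},\psi,s)$ at $\gp$ and $\overline{\gp}$ are the quadratic factors $\prod_{\gq\mid p}\bigl[(1-\alpha_\gq\psi(\gq)p^{-s})(1-\beta_\gq\psi(\gq)p^{-s})\bigr]^{-1}$ with the roots identified in Section \ref{subsection basechange}. Passing to $f_{\varphi/K}^p = f_{\varphi/K}^{\beta\beta}$ replaces each factor by the linear $(1-\beta_\gq\psi(\gq)p^{-s})^{-1}$. Multiplying out, one obtains
\begin{equation*}
L(f_{\varphi/K}^p,\psi,s) = \bigl(1-\alpha_\gp\psi(\gp)p^{-s}\bigr)\bigl(1-\alpha_{\overline{\gp}}\psi(\overline{\gp})p^{-s}\bigr)\,L(f_{\varphi/K},\psi,s),
\end{equation*}
and this identity remains valid when $\psi$ is ramified above $p$, since then both sides have the same (trivial) Euler factor at the ramified prime while the corresponding $\psi(\gq)$ is taken to be zero.

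Evaluating at $s=1$ and using $\alpha_\gp=\alpha_{\overline{\gp}}=\varphi(\gp)$ from Section \ref{subsection basechange}, the prefactor becomes $(1-\varphi(\gp)\psi(\gp)/p)(1-\varphi(\gp)\psi(\overline{\gp})/p)$. Because $\varphi$ and $\psi$ are algebraic Hecke characters (integer infinity type), the values $\varphi(\gp)$, $\psi(\gp)$, $\psi(\overline{\gp})$ all lie in $\overline{\mathbb{Q}}$, and hence so does this prefactor. Combining with the identical $\Gamma$-factor normalization in Definition \ref{gammafactors}, I would conclude
\begin{equation*}
\frac{\Lambda(f_{\varphi/K}^p,\psi)}{\Omega_{f_{\varphi/K}}} = \bigl(1-\varphi(\gp)\psi(\gp)/p\bigr)\bigl(1-\varphi(\gp)\psi(\overline{\gp})/p\bigr)\,\frac{\Lambda(f_{\varphi/K},\psi)}{\Omega_{f_{\varphi/K}}},
\end{equation*}
and the right-hand side is in $\overline{\mathbb{Q}}$ by Proposition \ref{period base change}.

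There is no serious obstacle: the only subtle point is bookkeeping the Euler factors uniformly across ramified and unramified $\psi$ above $p$, and verifying that the identity $L(f_{\varphi/K}^p,\psi,s)/L(f_{\varphi/K},\psi,s)$ written above holds with the usual convention $\psi(\gq)=0$ when $\gq$ divides the conductor of $\psi$. Once this is settled the corollary is immediate.
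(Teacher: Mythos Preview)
Your proposal is correct and follows essentially the same approach as the paper: the paper's proof simply records the identity
\[
\Lambda(f_{\varphi/K}^p,\psi)=\prod_{\gq\mid p}\left(1-\frac{\varphi(\gp)\psi(\gq)}{N(\gq)}\right)\Lambda(f_{\varphi/K},\psi)
\]
(citing a reference for the $p$-stabilization relation) and then invokes Proposition~\ref{period base change}. Your argument is the same, only with the Euler-factor bookkeeping spelled out explicitly, including the ramified case.
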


\begin{proof} 
The $L$-functions of $\theta_{\varphi/K}^p$ and $\theta_{\varphi/K}$ are related (see \cite[\S3.3]{palacios} for Bianchi modular forms with trivial nebentypus and $K$ with class number 1) by  
\begin{equation*}
\Lambda(\theta_{\varphi/K}^p,\psi)=\prod_{\gq|p}\left(1-\frac{\varphi(\gp)\psi(\gq)}{N(\gq)}\right)\Lambda(\theta_{\varphi/K},\psi).   
\end{equation*}
By Proposition \ref{period base change} we obtain the result.
\end{proof}

\subsection{\texorpdfstring{$p$}{Lg}-adic \texorpdfstring{$L$}{Lg}-function}{\label{s5.4}} 

Let $f_1,..., f_h$ be the collection of descents of $\theta_{\varphi/K}^p$ to $\mathcal{H}_3$. By Proposition \ref{basechangequasi} each $f^i$ is $C_i$-cuspidal, then  we can consider the coefficients $c_{q,r}^i(\cdot)$ from Definition \ref{factor}, and using Proposition \ref{attach partial} we can attach to $\theta_{\varphi/K}^p$ a partial Bianchi modular symbol $\phi_{\theta_{\varphi/K}^p}=(\phi_{f^1},...,\phi_{f^h})$ where
\begin{equation*}
\phi_{f^i}(\{a\}-\{\infty\})= \sum_{q,r=0}^{k}c_{q,r}^i(a)(\mathcal{Y}-a\mathcal{X})^{k-q}\mathcal{X}^q(\overline{\mathcal{Y}}-\overline{a}\overline{\mathcal{X}})^{k-r}\overline{\mathcal{X}}^r. 
\end{equation*}

\begin{proposition}{\label{Prop6.7}}
Let $\Omega_{\theta_{\varphi/K}}$ be the period in Proposition \ref{period base change}, then the partial Bianchi modular symbol $\phi_{\theta_{\varphi/K}^p}^\mathrm{alg}:=\phi_{\theta_{\varphi/K}^p}/\Omega_{\theta_{\varphi/K}}$ takes values in $V_{k,k}^*(E)$ for some number field $E$.
\end{proposition}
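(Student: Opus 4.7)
The plan is to reduce the statement to showing that each complex number $c_{s,u}^i(a)/\Omega_{f_{\varphi/K}}$ is algebraic, with all such values lying in a common number field $E$. Indeed, the polynomials $(\mathcal{Y}-a\mathcal{X})^{k-s}\mathcal{X}^s(\overline{\mathcal{Y}}-\overline{a}\overline{\mathcal{X}})^{k-u}\overline{\mathcal{X}}^u$ appearing in Definition \ref{pbmsattach} have coefficients in $K$ (since $a\in K$), and they are linearly independent in $V_{k,k}^*(\C)$. Thus $\phi'_{f^i}(\{a\}-\{\infty\})\in V_{k,k}^*(E)$ if and only if $c_{s,u}^i(a)/\Omega_{f_{\varphi/K}}\in E$ for all $(s,u)$. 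Moreover, by the $\Gamma_i(\gn)$-equivariance of $\phi_{f^i}$ together with Lemma \ref{fingen}, it suffices to verify this at finitely many divisors, hence at finitely many $a\in C_i$.

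The key tool is the integral formula (\ref{e4.1}), which after dividing by $\Omega_{f_{\varphi/K}}$ reads, for any Hecke character $\psi$ of $K$ of conductor $\gf$ coprime to $\gm$ and infinity type $0\leqslant(q,r)\leqslant(k,k)$,
\begin{equation*}
\frac{\Lambda(f_{\varphi/K}^p,\psi)}{\Omega_{f_{\varphi/K}}}=\frac{(-1)^{k+q+r}2}{DwW(\psi)}\sum_{i=1}^h \psi(t_i)\sum_{\substack{[a]\in\gf^{-1}/\mathcal{O}_K\\((a)\gf,\gf)=1,\,a\in C_i}}\psi_\gf(a)\,\frac{c_{q,r}^i(a)}{\Omega_{f_{\varphi/K}}}.
\end{equation*}
By Corollary \ref{lemma6.9}, the left-hand side is algebraic. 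The Gauss-sum factor $W(\psi)$, the idele values $\psi(t_i),\psi_\gf(a)$, and the remaining numerical constants are all algebraic. The strategy is then to \emph{invert} this identity. Fix $(q,r)$ and pick a single conductor $\gf$ big enough that the finitely many representatives $a$ produced in the reduction step all arise among the cosets $[a]\in\gf^{-1}/\mathcal{O}_K$ with $a\in C_i$ and $((a)\gf,\gf)=1$. As $\psi$ varies over Hecke characters of infinity type $(q,r)$ and conductor dividing $\gf$, the pair $(\psi(t_\cdot),\psi_\gf(\cdot))$ runs through a full character group of $\mathrm{Cl}(K)\times(\mathcal{O}_K/\gf)^\times$ modulo the image of $\mathcal{O}_K^\times$. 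Orthogonality on this finite abelian group then expresses each individual $c_{q,r}^i(a)/\Omega_{f_{\varphi/K}}$ as a finite $\overline{\Q}$-linear combination of values $\Lambda(f_{\varphi/K}^p,\psi)/\Omega_{f_{\varphi/K}}$, yielding algebraicity.

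To produce a common number field $E$, one adjoins to $K$ the values of the finitely many Hecke characters $\psi$ used, together with the entries of the inverse character-sum matrix (built from Gauss sums $W(\psi)$ and roots of unity). Since only finitely many $(q,r)$, finitely many $a$, and finitely many $\psi$ are involved, $E$ is a finite extension of $\Q$, and $\phi'_{f_{\varphi/K}^p}$ takes values in $V_{k,k}^*(E)$.

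The main obstacle is the character-inversion step: one must choose the auxiliary conductor $\gf$ carefully so that (i) every representative $a$ required by the finite-generation reduction actually appears in the index set of the inner sum, (ii) enough Hecke characters of the desired infinity type and conductor dividing $\gf$ exist to invert the double sum, and (iii) orthogonality cleanly separates both the class-group index $i$ (through $\psi(t_i)$) and the coset index $a$ (through $\psi_\gf(a)$), so that one genuinely isolates a single $c_{q,r}^i(a)$ rather than merely a linear combination. This is structurally the analogue of the inversion performed classically by Bellaïche--Dasgupta in \cite{bellaiche2015p} and by Williams in \cite{chris2017}, now in the partial-symbol and mixed-weight $(k,k)$ Bianchi setting.
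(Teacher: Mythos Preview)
Your proposal is correct and follows essentially the same route as the paper: both reduce to algebraicity of the individual $c_{q,r}^i(a)/\Omega_{f_{\varphi/K}}$ via the integral formula (\ref{e4.1}) together with Corollary \ref{lemma6.9}, and both invoke finite generation of $\Delta_{C_i}^0$ (Lemma \ref{fingen}) to land in a fixed number field $E$. The only cosmetic differences are that the paper orders the steps slightly differently (it first extracts $c_{q,r}^i(a)/\Omega_{f_{\varphi/K}}\in\overline{\Q}$, then passes from the special polynomials $(X+aY)^qY^{k-q}(\overline{X}+\overline{a}\overline{Y})^r\overline{Y}^{k-r}$ to arbitrary $P$ by a change of basis, and only at the end uses finite generation), and that where you spell out the orthogonality argument with an auxiliary conductor $\gf$, the paper simply writes ``by linear independence of characters''. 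Your explicit discussion of the inversion step, including the three conditions on the choice of $\gf$, is in fact more careful than the paper's one-line appeal.
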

\begin{proof}
Let $\psi$ be a Hecke character of $K$ of infinity type $(q,r)$ satisfying $0\leqslant q,r \leqslant k$, then by Proposition \ref{T4.6} we have
\begin{equation*}
\Lambda(\theta_{\varphi/K}^p,\psi)=\left[\frac{(-1)^{k+q+r}2}{D_Kw_K\tau(\psi)}\right]\sum_{i=1}^{h}\left[\psi(t_i)\sum_{\substack{[a]\in\gf^{-1}/\mathcal{O}_K \\ ((a)\gf,\gf)=1,\;a\in C_i}}\psi_\gf(a)c_{q,r}^i(a)\right].
\end{equation*}
Dividing both sides by $\Omega_{\theta_{\varphi/K}}$ we obtain
\begin{equation}{\label{e.6.2}}
\left[\frac{(-1)^{k+q+r}2}{D_Kw_K\tau(\psi)}\right]^{-1}\frac{\Lambda(\theta_{\varphi/K}^p,\psi)}{\Omega_{\theta_{\varphi/K}}}=\sum_{i=1}^{h}\left[\psi(t_i)\sum_{\substack{[a]\in\gf^{-1}/\mathcal{O}_K \\ ((a)\gf,\gf)=1,\;a\in C_i}}\psi_\gf(a)\frac{c_{q,r}^i(a)}{\Omega_{\theta_{\varphi/K}}}\right].
\end{equation}

By Corollary \ref{lemma6.9} we have $\Lambda(\theta_{\varphi/K}^p,\psi)/\Omega_{\theta_{\varphi/K}}\in\overline{\mathbb{Q}}$, then the left-hand side of (\ref{e.6.2}) is an algebraic number and consequently by linear independence of characters we have that $c_{q,r}^i(a)/\Omega_{\theta_{\varphi/K}}\in\overline{\mathbb{Q}}$ for each $i$. 

Consider the polynomial $(X+aY)^qY^{k-q}(\overline{X}+\overline{a}\overline{Y})^{r}\overline{Y}^{k-r}$, we have that
\begin{equation*}
\phi_{f^i}(\{a\}-\{\infty\})\left[(X+aY)^qY^{k-q}(\overline{X}+\overline{a}\overline{Y})^{r}\overline{Y}^{k-r}\right]=c_{q,r}^i(a). 
\end{equation*}
Then, defining $\phi_{f^i}^\mathrm{alg}:=\phi_{f^i}/\Omega_{\theta_{\varphi/K}}$ we have 
\begin{equation}{\label{algebra}}
\phi_{f^i}^\mathrm{alg}(\{a\}-\{\infty\})\left[(X+aY)^qY^{k-q}(\overline{X}+\overline{a}\overline{Y})^{r}\overline{Y}^{k-r}\right]=c_{q,r}^i(a)/\Omega_{\theta_{\varphi/K}}\in\overline{\mathbb{Q}}.    
\end{equation}

Now, using (\ref{algebra}), we will show $\phi_{f^i}^\mathrm{alg}\in \mathrm{Symb}_{\Gamma_0^i(\gn),C_i}(V_{k,k}^*(\overline{\Q}))$. For this, consider some divisor $D\in\Delta_{C_i}^0$ and a polynomial $P\left[\binom{X}{Y}\binom{\overline{X}}{\overline{Y}}\right]\in V_{k,k}(\overline{\Q})$, we want to show $[\phi_{f^i}^\mathrm{alg}(D)](P)\in\overline{\Q}$. 

By Lemma \ref{fingen}, $\Delta_{C_i}^0$ is finitely generated as a $\mathbb{Z}[\Gamma_0^i(\gn)]$-module by divisors $\{a\}-\{\infty\}$, with $a\in C_i$, then, it is enough to prove $[\phi_{f^i}^\mathrm{alg}(\{a\}-\{\infty\})](P)\in\overline{\Q}$.

Let $P\left[\binom{X}{Y}\binom{\overline{X}}{\overline{Y}}\right]=\sum_{b,d=0}^{k}t_{b,d}X^bY^{k-b}\overline{X}^d\overline{Y}^{k-d}$ with $t_{b,d}\in\overline{\Q}$. For $0\leqslant b,d \leqslant k$, we can write each $X^bY^{k-b}\overline{X}^d\overline{Y}^{k-d}$ as a homogeneous polynomial $Q_{b,d}\left[\binom{X+aY}{Y}\binom{\overline{X}+\overline{a}\overline{Y}}{\overline{Y}}\right]$ after replacing $X$ by $(X+aY)-aY$ and $\overline{X}$ by $(\overline{X}+\overline{a}\overline{Y})-\overline{a}\overline{Y}$. Applying (\ref{algebra}) on each $Q_{b,d}$, we obtain $[\phi_{f^i}^\mathrm{alg}(\{a\}-\{\infty\})](Q_{b,d})\in\overline{\Q}$ and then $[\phi_{f^i}^\mathrm{alg}(\{a\}-\{\infty\})](P)\in\overline{\Q}$.

Finally, using again that $\Delta_{C_i}^0$ is finitely generated as a $\mathbb{Z}[\Gamma_0^i(\gn)]$-module, we have for all $i$ that $\phi_{f^i}^\mathrm{alg}\in\mathrm{Symb}_{\Gamma_0^i(\gn),C_i}(V_{k,k}^*(E))$ for some sufficiently large number field $E$. Therefore, $\phi_{\theta_{\varphi/K}^p}^\mathrm{alg}\in\mathrm{Symb}_{K_0(\gn),C}(V_{k,k}^*(E))$.
\end{proof}

The previous proposition and the work done in Section \ref{S4} allow us to obtain the $p$-adic $L$-function of $\theta_{\varphi/K}^p$ without using an isomorphism between $\C$ and $\overline{\Q}_p$. 

\begin{theorem}{\label{Cor8.3}} 
Let $\varphi$ be a Hecke character of $K$ with conductor coprime to $p$ and infinity type $(-k-1,0)$ for $k\geqslant0$, and denote by $\theta_{\varphi}^p$ the ordinary $p$-stabilization of the CM form $\theta_{\varphi}$ induced by $\varphi$. Let $\theta_{\varphi/K}^p$ be the base change to $K$ of $\theta_{\varphi}^p$ and $\Omega_{\theta_{\varphi/K}}$ be the complex period of Definition \ref{periodberger}. Then there exists a unique measure $L_p(\theta_{\varphi/K}^p,-)$ on $\mathrm{Cl}_K(p^{\infty})$ such that for any Hecke character $\psi$ of $K$ of conductor $\gf=\gp^t\overline{\gp}^s$ and infinity type $(q,r)$ satisfying $0\leqslant q,r \leqslant k$, we have 
\begin{equation}\label{equation6.3}
L_p(\theta_{\varphi/K}^p,\psi_{p-\mathrm{fin}})=\left[\prod_{\gq|p}\left(1-\frac{1}{\varphi(\overline{\gp})\psi(\gq)}\right)\right]\left[ \frac{D_Kw_K\tau(\psi)}{(-1)^{k+q+r}2 \varphi(\overline{\gp})^{t+s}\Omega_{\theta_{\varphi/K}}} \right] \Lambda(\theta_{\varphi/K}^p,\psi).
\end{equation}
\end{theorem}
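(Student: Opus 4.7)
The plan is to follow the machinery developed in Section \ref{S4} for $C$-cuspidal Bianchi modular forms, but taking crucial advantage of the fact that we are in the ordinary CM base change situation, so that (i) algebraicity is available through the period of Proposition \ref{period base change}, and (ii) the slope is zero, so the resulting distribution will actually be a bounded measure.

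First, by Proposition \ref{basechangequasi}, the ordinary $p$-stabilization $f_{\varphi/K}^p$ is a $C$-cuspidal Bianchi eigenform of weight $(k,k)$ and level $K_0(\gn)$, with $U_\gp$- and $U_{\overline{\gp}}$-eigenvalues both equal to $\varphi(\overline{\gp})$. By Remark \ref{basechangepstabilization} these eigenvalues have $p$-adic valuation $0$, so in particular $f_{\varphi/K}^p$ has small slope in the sense of Definition \ref{D10.4}. Proposition \ref{attach partial} then produces a complex-valued partial Bianchi modular eigensymbol $\phi_{f_{\varphi/K}^p} \in \mathrm{Symb}_{K_0(\gn),C}(V_{k,k}^*(\mathbb{C}))$. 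The next step is to avoid any use of a fixed $\iota : \mathbb{C}\simeq\overline{\Q}_p$: using Proposition \ref{Prop6.7}, the rescaled symbol $\phi'_{f_{\varphi/K}^p} := \phi_{f_{\varphi/K}^p}/\Omega_{f_{\varphi/K}}$ takes values in $V_{k,k}^*(E)$ for some number field $E\subset\overline{\Q}$. Applying $\iota_p : \overline{\Q}\hookrightarrow\overline{\Q}_p$ from Section \ref{notations}, we view it as an element of $\mathrm{Symb}_{K_0(\gn),C}(V_{k,k}^*(L))$ for a sufficiently large finite extension $L/\Q_p$.

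Since the small slope condition holds, Proposition \ref{partialcontrol} provides a unique lift $\Psi_{f_{\varphi/K}^p} \in \mathrm{Symb}_{K_0(\gn),C}(\mathcal{D}_{k,k}(L))$ of $\phi'_{f_{\varphi/K}^p}$ to the overconvergent world, and this lift is a simultaneous $U_\gp$- and $U_{\overline{\gp}}$-eigensymbol with the same eigenvalues. I then define
\begin{equation*}
L_p(f_{\varphi/K}^p,-) := \mathrm{Mel}(\Psi_{f_{\varphi/K}^p}) \in \mathcal{D}(\mathrm{Cl}_K(p^\infty),L),
\end{equation*}
using Definition \ref{mellin}. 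Because both slopes $h_\gp = h_{\overline{\gp}} = v_p(\varphi(\overline{\gp})) = 0$, Proposition \ref{admissibility}(ii) shows that this distribution is $(0,0)$-admissible, i.e.\ it is in fact a bounded $p$-adic measure on $\mathrm{Cl}_K(p^\infty)$, and this bounded growth characterizes it uniquely.

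The remaining task is to verify the interpolation formula (\ref{equation6.3}). I would evaluate $\mathrm{Mel}(\Psi_{f_{\varphi/K}^p})$ at the $p$-adic avatar $\psi_{p-\mathrm{fin}}$ of a Hecke character $\psi$ of conductor $\gf=\gp^t\overline{\gp}^s$ and infinity type $0\leqslant(q,r)\leqslant(k,k)$; by construction this unfolds into a finite sum over representatives $t_i$ and coset representatives $[a]\in\gf^{-1}/\roi_K$ of pairings of $\phi'_{f^i}$ against polynomials of the form $(X+aY)^q Y^{k-q}(\overline{X}+\overline{a}\overline{Y})^r\overline{Y}^{k-r}$. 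By Remark \ref{Remarkevaluarpartial} these pairings are precisely the normalized coefficients $c_{q,r}^i(a)/\Omega_{f_{\varphi/K}}$, so comparing with the integral expression for $\Lambda(f_{\varphi/K}^p,\psi)$ supplied by Proposition \ref{T4.6} (equation (\ref{e4.1})) recovers the right-hand side of (\ref{equation6.3}), after collecting the Euler-type factors $Z_\gp(\psi) = 1-[\lambda_\gp\psi(\gp)]^{-1} = 1-[\varphi(\overline{\gp})\psi(\gp)]^{-1}$ coming from passing between $\psi_{p-\mathrm{fin}}$ and $\psi$, and the normalization $\lambda_\gf = \varphi(\overline{\gp})^{t+s}$. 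The hard step is tracking these local factors at $\gp$ and $\overline{\gp}$ carefully (including the appearance of $W(\psi)$ rather than $W(\psi^{-1})$) and confirming that they assemble into exactly the shape displayed in the statement; this is essentially the same computation as in the proof of Theorem \ref{C-cuspidal p-adic intro}, but specialized to $\lambda_\gp = \lambda_{\overline{\gp}} = \varphi(\overline{\gp})$ and with the complex period $\Omega_{f_{\varphi/K}}$ playing the role previously played by $\iota$.
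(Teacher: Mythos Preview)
Your proposal is correct and follows essentially the same route as the paper's proof: attach the partial symbol via Proposition~\ref{attach partial}, renormalize by $\Omega_{f_{\varphi/K}}$ using Proposition~\ref{Prop6.7} to obtain algebraic values, lift via Proposition~\ref{partialcontrol}, take the Mellin transform, and then specialize the interpolation formula of Theorem~\ref{C-cuspidal p-adic} using $\lambda_\gp=\lambda_{\overline{\gp}}=\varphi(\overline{\gp})$. One small point of presentation: bounded growth alone does not determine the measure uniquely---it is the combination of $(0,0)$-admissibility with the interpolation property at the dense set of critical characters that forces uniqueness, so that claim should come after (not before) the interpolation is established.
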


\begin{proof}
By Proposition \ref{attach partial} we can attach to $\theta_{\varphi/K}^p$ a complex-valued partial Bianchi modular symbol $\phi_{\theta_{\varphi/K}^p}$. By Proposition \ref{Prop6.7}, the partial Bianchi modular symbol $\phi_{\theta_{\varphi/K}^p}^\mathrm{alg}=\phi_{\theta_{\varphi/K}^p}/\Omega_{\theta_{\varphi/K}}$ has values in $V_{k,k}^*(L)$ for a sufficiently large $p$-adic field $L$. Since $\theta_{\varphi/K}^p$ has small slope, we can lift $\phi_{\theta_{\varphi/K}^p}^\mathrm{alg}$ to its corresponding unique overconvergent partial Bianchi eigensymbol $\Psi_{\theta_{\varphi/K}^p}$ using Proposition \ref{partialcontrol}. Taking the Mellin transform of $\Psi_{\theta_{\varphi/K}^p}$ we obtain a locally analytic distribution $L_p(\theta_{\varphi/K}^p,-)=\mathrm{Mel}(\Psi_{\theta_{\varphi/K}^p})$ on $\mathrm{Cl}_K(p^{\infty})$ that is $(h_\gp,h_{\overline{\gp}})$-admissible, where $h_\mathfrak{p}=v_p(\lambda_\mathfrak{p})=0$ and $h_{\overline{\gp}}=v_p(\lambda_{\overline{\gp}})=0$, thus is bounded, i.e., it is a measure.

Using the connection between Bianchi modular symbols and $L$-values, for any Hecke character $\psi$ of $K$ of conductor $\gf=\gp^t\overline{\gp}^s$ and infinity type $(q,r)$ satisfying $0\leqslant q,r \leqslant k$, we obtain a similar interpolation to Theorem \ref{C-cuspidal p-adic} given by
\begin{equation*}
L_p(\theta_{\varphi/K}^p,\psi_{p-\mathrm{fin}})=\left[ \prod_{\gp|p} \left(1-\frac{1}{\lambda_\gp\psi(\gp)}\right) \right]\left[ \frac{D_Kw_K\tau(\psi)}{(-1)^{k+q+r}2\lambda_\gf\Omega_{\theta_{\varphi/K}}} \right] \Lambda(\theta_{\varphi/K}^p,\psi).
\end{equation*}
Since $\lambda_\gp=\lambda_{\overline{\gp}}=\varphi(\overline{\gp})$ and $\lambda_\gf=\lambda_\gp^t\lambda_{\overline{\gp}}^s=\varphi(\overline{{\gp}})^{t+s}$, we obtain the interpolation desired, which gives us the uniqueness of $L_p(\theta_{\varphi/K}^p,-)$.
\end{proof}

The measure $L_p^\iota(\theta_{\varphi/K}^p,-)$ from Theorem \ref{C-cuspidal p-adic} and $L_p(\theta_{\varphi/K}^p,-)$ can be related using the factor $\iota(\Omega_{\theta_{\varphi/K}})$.

\begin{proposition}{\label{equality of measures}}
We have the following equality of measures on $\mathrm{Cl}_K(p^\infty)$ 
\begin{equation*}
L_p(\theta_{\varphi/K}^p,-)=\frac{L_p^\iota(\theta_{\varphi/K}^p,-)}{\iota(\Omega_{\theta_{\varphi/K}})}.
\end{equation*}
\end{proposition}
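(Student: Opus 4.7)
The strategy is an interpolation-and-uniqueness argument. I will show that both $L_p(f_{\varphi/K}^p,-)$ and $L_p^\iota(f_{\varphi/K}^p,-)/\iota(\Omega_{f_{\varphi/K}})$ are $(0,0)$-admissible distributions on $\mathrm{Cl}_K(p^\infty)$ agreeing on the dense family of characters $\{\psi_{p-\mathrm{fin}}\}$ coming from Hecke characters in the interpolation range, and then invoke the uniqueness built into Theorem \ref{C-cuspidal p-adic} (equivalently Theorem \ref{Cor8.3}) to identify them.

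First, I will check admissibility. By Remark \ref{basechangepstabilization} the $U_\gp$- and $U_{\overline{\gp}}$-eigenvalues of $f_{\varphi/K}^p$ both equal $\varphi(\overline{\gp})$, which has $p$-adic valuation $0$; so Proposition \ref{admissibility} shows that $L_p^\iota(f_{\varphi/K}^p,-)$ is $(0,0)$-admissible, hence a bounded measure, and dividing by the nonzero scalar $\iota(\Omega_{f_{\varphi/K}})$ preserves this. Theorem \ref{Cor8.3} produces $L_p(f_{\varphi/K}^p,-)$ as a bounded measure as well, so both distributions in the statement are of the same admissibility type and the uniqueness assertion reduces the proof to comparing interpolation values on the avatars $\psi_{p-\mathrm{fin}}$.

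Let $\psi$ be a Hecke character of conductor $\gf=\gp^t\overline{\gp}^s\mid p^\infty$ and infinity type $0\leqslant(q,r)\leqslant(k,k)$. Substituting $\lambda_\gp=\lambda_{\overline{\gp}}=\varphi(\overline{\gp})$ and $\lambda_\gf=\varphi(\overline{\gp})^{t+s}$ into the interpolation of Theorem \ref{C-cuspidal p-adic}, dividing by $\iota(\Omega_{f_{\varphi/K}})$, and using Corollary \ref{lemma6.9} to rewrite $\Lambda(f_{\varphi/K}^p,\psi)/\Omega_{f_{\varphi/K}}\in\overline{\Q}$ so that $\iota$ can be applied meaningfully, the quantity $L_p^\iota(f_{\varphi/K}^p,\psi_{p-\mathrm{fin}})/\iota(\Omega_{f_{\varphi/K}})$ takes the form
$$\Bigl(\prod_{\gq\mid p}Z_\gq(\psi)\Bigr)\Bigl[\frac{DwW(\psi)}{(-1)^{k+q+r}\,2\,\varphi(\overline{\gp})^{t+s}}\Bigr]\,\iota\!\left(\frac{\Lambda(f_{\varphi/K}^p,\psi)}{\Omega_{f_{\varphi/K}}}\right),$$
and a term-by-term comparison identifies this with the right-hand side of the interpolation formula of Theorem \ref{Cor8.3} evaluated at $\psi_{p-\mathrm{fin}}$.

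The only non-routine point is the bookkeeping of conventions: one must reconcile the Euler factors $Z_\gp(\psi)=1-[\lambda_\gp\psi(\gp)]^{-1}$ from Theorem \ref{C-cuspidal p-adic} with the factors $1-[\varphi(\overline{\gp})\psi(\gq)]^{-1}$ from Theorem \ref{Cor8.3}, and check that the Gauss sum conventions used in both interpolation formulas match (tracing them back through Proposition \ref{T4.6} and Remark \ref{Gauss}). Once these normalizations are aligned, the two interpolations agree on the nose, and admissibility then forces the equality $L_p(f_{\varphi/K}^p,-)=L_p^\iota(f_{\varphi/K}^p,-)/\iota(\Omega_{f_{\varphi/K}})$ of measures on $\mathrm{Cl}_K(p^\infty)$.
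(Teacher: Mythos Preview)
Your proposal is correct and follows essentially the same route as the paper: compare the interpolation formulas of Theorems \ref{C-cuspidal p-adic} and \ref{Cor8.3} at all $\psi_{p-\mathrm{fin}}$ in the critical range, observe that both sides are bounded (i.e., $(0,0)$-admissible) since $v_p(\varphi(\overline{\gp}))=0$, and conclude equality from the uniqueness of bounded distributions with prescribed interpolation values. The paper phrases the final step as ``a non-zero bounded analytic function on an open ball has at most finitely many zeros,'' which is the same principle you invoke via the uniqueness clause of Theorem \ref{C-cuspidal p-adic}.
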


\begin{proof}
Let $\psi$ be a Hecke character of $K$ of conductor $\gf|(p^\infty)$ and infinity type $(q,r)$ satisfying $0\leqslant q,r \leqslant k$, we write 
$$B(\theta_{\varphi/K}^p,\psi)=\left[ \prod_{\gp|p} \left(1-\frac{1}{\lambda_\gp\psi(\gp)}\right) \right]\left[ \frac{D_Kw_K\tau(\psi)}{(-1)^{k+q+r}2\lambda_\gf} \right],$$ for the factor appearing in Theorem \ref{C-cuspidal p-adic}.

By Theorem \ref{C-cuspidal p-adic} we have the interpolation property
\begin{align*}
L_p^\iota(\theta_{\varphi/K}^p,\psi_{p-\mathrm{fin}})&=\iota\left(B(\theta_{\varphi/K}^p,\psi)\Lambda(\theta_{\varphi/K}^p,\psi)\right)\\
&=\iota\left(B(\theta_{\varphi/K}^p,\psi)\frac{\Lambda(\theta_{\varphi/K}^p,\psi)}{\Omega_{\theta_{\varphi/K}}}\right)\iota(\Omega_{\theta_{\varphi/K}}).
\end{align*}

Since $B(\theta_{\varphi/K}^p,\psi)$ is rational and $\frac{\Lambda(\theta_{\varphi/K}^p,\psi)}{\Omega_{\theta_{\varphi/K}}}$ as well by Corollary \ref{lemma6.9}, then
$$L_p^\iota(\theta_{\varphi/K}^p,\psi_{p-\mathrm{fin}})=\iota_p\left(\iota_\infty^{-1}\left(B(\theta_{\varphi/K}^p,\psi)\frac{\Lambda(\theta_{\varphi/K}^p,\psi)}{\Omega_{\theta_{\varphi/K}}}\right)\right)\iota(\Omega_{\theta_{\varphi/K}}),$$
and note that the first factor is the one appearing in the interpolation property for $L_p(\theta_{\varphi/K}^p,\psi_{p-\mathrm{fin}})$.

We deduce that 
\begin{equation*}
L_p(\theta_{\varphi/K}^p,\psi_{p-\mathrm{fin}})=\frac{L_p^\iota(\theta_{\varphi/K}^p,\psi_{p-\mathrm{fin}})}{\iota(\Omega_{\theta_{\varphi/K}})},
\end{equation*}
for infinitely many characters $\psi_{p-\mathrm{fin}}$. Since both sides of the equation are bounded functions, the result follows because a non-zero bounded analytic function on an open ball has at most finitely many zeros.
\end{proof}

\subsection{Relation with Katz \texorpdfstring{$p$}{Lg}-adic \texorpdfstring{$L$}{Lg}-functions}{\label{s5.5}}

To relate the $p$-adic $L$-function of $\theta_{\varphi/K}^p$ with Katz $p$-adic $L$-functions, we first introduce some notation and state the interpolation property of Katz $p$-adic $L$-functions, for more details the reader can see \cite{bertolini2012p} and \cite{hida1993anti}.

Let $\psi$ be a Hecke character of $K$ of suitable infinity type and conductor $\mathfrak{g}\gf$ with $\mathfrak{g}$ coprime to $p$ and $\gf|p^\infty$, then Katz constructed in \cite{katz1978p} the $p$-adic $L$-function of $\psi$ when $\mathfrak{g}$ is trivial. Later, Hida and Tilouine in \cite{hida1993anti} extended Katz' construction for non-trivial $\mathfrak{g}$. 

Recall the Gauss sum of $\psi$ from Section \ref{L-C-cuspi}, we now define the \textit{local} Gauss sum of $\psi$ at prime ideals $\gq$ dividing the conductor of $\psi$ by
\begin{equation*}
\tau_{\gq}(\psi):=\psi(\varpi_{\gq}^{-t})\sum_{u\in(\mathcal{O}_{\gq}/{\gq}^t)^\times}\psi_{\gq}(u)\textbf{e}_K(ud_{\gq}^{-1}\varpi_{\gq}^{-t}) 
\end{equation*}
where $\textbf{e}_K$ is the character in Section \ref{Fourier}, $\varpi_\gq$ is a prime element in $\mathcal{O}_\gq$, $t=t(\gq)$ is the exponent of $\gq$ in the conductor of $\psi$ and $d_\gq$ is the $\gq$ component of the idele $d$ associated to the different ideal of $K$. Outside the conductor of $\psi$, we simply put $\tau_{\gq}(\psi)=1$.

Recall the elliptic curve $E$ with complex multiplication by $\mathcal{O}_K$ of Section \ref{basechangeLfunction}. In Proposition \ref{period base change}, we obtained algebraicity of critical $L$-values of $\psi$ using the complex period $\Omega_\infty$. Analogously, there exists a $p$-adic period $\Omega_p\in\hat{\mathcal{O}}_{\mathrm{ur}}^{\times}$, where $\hat{\mathcal{O}}_{\mathrm{ur}}$ is the ring of integers of the maximal unramified extension of $\Q_p$, that gives us algebraicity of the $p$-adic $L$-values of $\psi_{p-\mathrm{fin}}$. Note that each choice of $\Omega_\infty$ and $\Omega_p$ depends on some algebraic numbers, however the ratio of them does no depend on the choices (see \cite[\S 0]{hida1993anti}).

\begin{theorem}{\label{Katztheorem}}(Katz, Hida-Tilouine)
Let $\mathfrak{g}$ be an ideal of $K$ coprime to $p$. There exists a unique measure $L_p(-)$ on the ray class group $\mathrm{Cl}_K(\mathfrak{g} p^\infty)$ whose value on the $p$-adic avatar $\psi_{p-\text{fin}}$ of a Hecke character $\psi$ of $K$ of infinity type $(a,b)$ with $a>0\geqslant b$ and conductor $\mathfrak{g}\gf$ with $\gf=\gp^t\overline{\gp}^s$ is given by: 
\begin{equation*}
\frac{L_p(\psi_{p-\text{fin}})}{\Omega_p^{a-b}}=\frac{\Gamma(a)\sqrt{D_K}^{b}\tau_{\gp}(\psi)}{2(-1)^{a+b}w_K^{-1}2^{b}N(\gp)^{t}}(1-\psi(\overline{\gp}))\left(1-[\psi(\gp)N(\gp)^t]^{-1}\right) \frac{L(\psi,0)}{\pi^b\Omega_\infty^{a-b}}. \end{equation*}
\end{theorem}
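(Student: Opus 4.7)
The plan is to invoke the established Katz--Hida--Tilouine machinery of $p$-adic interpolation of Hecke $L$-values, since this is a classical result rather than a new contribution of the present paper; accordingly, the ``proof'' here is essentially a pointer to \cite{katz1978p} and \cite{hida1993anti}. Nevertheless, let me sketch the strategy one would follow to see how the interpolation formula arises.

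First I would use class field theory to identify $\mathrm{Cl}_K(\mathfrak{g}p^\infty)$ with the Galois group of the maximal abelian extension of $K$ unramified outside $\mathfrak{g}p$, and reformulate the problem as that of constructing a bounded measure whose integrals against $p$-adic avatars of algebraic Hecke characters recover the normalized critical values $L(\psi,0)/\Omega_\infty^{a-b}$ up to the prescribed Euler-like factors. The splitting hypothesis $p=\gp\overline{\gp}$ is crucial: the CM elliptic curve $E$ with complex multiplication by $\roi_K$ then has ordinary reduction at $\gp$, so $E$ equipped with a trivialization of its formal group defines a point on the Igusa tower over the ordinary locus of the modular curve of tame level $\mathfrak{g}$. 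A fixed N\'eron differential $\omega$ on $E$ determines simultaneously the complex period $\Omega_\infty$ (by integration against $H_1(E(\C),\Z)$) and the $p$-adic period $\Omega_p$ (by comparison with the canonical differential on the formal group).

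Next I would invoke Damerell's theorem, which expresses $L(\psi,0)$ for $\psi$ of infinity type $(a,b)$ with $a>0\geqslant b$ as a finite $\Q$-linear combination of values of holomorphic Eisenstein series of level $\mathfrak{g}\gf$ at the CM point associated to $E$; these Eisenstein values are algebraic after dividing by $\Omega_\infty^{a-b}$. The key step, and the main obstacle, is to show that as $\psi$ varies $p$-adically through the characters of $\mathrm{Cl}_K(\mathfrak{g}p^\infty)$, these Eisenstein series glue into a single $p$-adic modular form on the Igusa tower, and hence into a measure. This rests on Katz's $q$-expansion principle and on a careful analysis of Eisenstein $q$-expansions at the cusps. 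The Euler factor $1-\psi(\overline{\gp})$ appears when passing from the classical Eisenstein series to its ordinary projection (removing the contribution of the non-unit root of Frobenius); the factor $(1-[\psi(\gp)N(\gp)^t]^{-1})$ arises from the projector onto the old-form subspace at $\gp$; and the local Gauss sum $W_{\gp}(\psi)=N(\gp)^{-t}\tau_\gp(\psi)$ appears in the standard formula relating Eisenstein series and their twists by characters of $\gp$-power conductor.

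Finally, uniqueness of the measure $L_p(-)$ follows because the collection of $p$-adic avatars $\psi_{p-\text{fin}}$ ranging over Hecke characters of infinity type $(a,b)$ with $a>0\geqslant b$ is Zariski dense in the rigid space of continuous $p$-adic characters of $\mathrm{Cl}_K(\mathfrak{g}p^\infty)$, and a bounded measure is determined by its integrals against a dense set of continuous characters. The genuinely hard part is the $p$-adic interpolation of Eisenstein series on the Igusa tower; this is the content of \cite{katz1978p}, and I would rely entirely on that work (together with the extension to non-trivial $\mathfrak{g}$ in \cite{hida1993anti}) rather than reprove it.
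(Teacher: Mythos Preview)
Your proposal is correct and matches the paper's own treatment: the paper does not prove this theorem but simply cites it as a known result due to Katz \cite{katz1978p} and Hida--Tilouine \cite{hida1993anti}, referring the reader to \cite{bertolini2012p} and \cite{hida1993anti} for details. Your sketch of the underlying strategy is accurate and in fact goes well beyond what the paper provides.
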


In order to link our $p$-adic $L$-function $L_p(\theta_{\varphi/K}^p,-)$ with Katz $p$-adic $L$-functions, we use the factorization of $L(\theta_{\varphi/K},\psi,s)$ as the product of two Hecke $L$-functions in Section \ref{basechangeLfunction}. For this, we first rewrite the interpolation property of $L_p(\theta_{\varphi/K}^p,-)$  by considering the relation between $\Lambda(\theta_{\varphi/K}^p,-)$ and $\Lambda(\theta_{\varphi/K},-)$ in the proof of Corollary \ref{lemma6.9}. Then, by Theorem \ref{Cor8.3}, for any Hecke character $\psi$ of $K$ of conductor $\gf=\gp^t\overline{\gp}^s$ and infinity type $(q,r)$ satisfying $0\leqslant q,r \leqslant k$, we have
\begin{equation}{\label{interpolation no p-estab}}
L_p(\theta_{\varphi/K}^p,\psi_{p-\mathrm{fin}})=E_p(\theta_{\varphi/K}^p)\left[ \frac{D_Kw_K\tau(\psi)}{(-1)^{k+q+r}2 \varphi(\overline{\gp})^{t+s}\Omega_{\theta_{\varphi/K}}} \right] \Lambda(\theta_{\varphi/K},\psi),
\end{equation}
where 
\begin{equation*}
E_p(\theta_{\varphi/K}^p)=\prod_{\gq|p} \left(1-\frac{\varphi(\gp)\psi(\gq)}{N(\gp)}\right) \left(1-\frac{1}{\varphi(\overline{\gp})\psi(\gq)}\right).
\end{equation*}

\begin{theorem}{\label{T9.4}}
Let $\varphi$ be a Hecke character of $K$ with conductor $\gM$ coprime to $p$ and infinity type $(-k-1,0)$ for $k\geqslant0$, and denote by $\theta_{\varphi}^p$ the ordinary $p$-stabilization of the CM form $\theta_{\varphi}$ induced by $\varphi$. Let $\theta_{\varphi/K}^p$ be the base change to $K$ of $\theta_{\varphi}^p$ and $L_p(\theta_{\varphi/K}^p,-)$ its $p$-adic $L$-function, then for all  $\kappa\in\mathfrak{X}(\mathrm{Cl}_K(p^\infty))$ we have 
\begin{equation*}
L_p(\theta_{\varphi/K}^p,\kappa)=\frac{L_p(\varphi_{p-\rm{fin}}^c\kappa \chi_p)L_p(\varphi_{p-\rm{fin}}^c\kappa^c \lambda_p\chi_p)}{\Omega_p^{2k+2}},
\end{equation*}
where $\lambda_p,\chi_p$ are the $p$-adic avatars of the character $\lambda_K$ and the adelic norm $|\cdot|_{\A_K}$ respectively, and $\Omega_p$ is the $p$-adic period in Theorem \ref{Katztheorem}.
\end{theorem}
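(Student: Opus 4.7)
The strategy is to verify the claimed identity of measures on the Zariski-dense subset of $\mathfrak{X}(\mathrm{Cl}_K(p^\infty))$ consisting of $p$-adic avatars $\psi_{p-\mathrm{fin}}$ of Hecke characters $\psi$ of $K$ with conductor $\gf \mid p^\infty$ and infinity type $0 \leqslant (q,r) \leqslant (k,k)$. Since both sides are bounded $p$-adic analytic functions (by the admissibility in Theorem \ref{Cor8.3} and the Katz theorem), agreement on such a dense subset propagates to equality of measures, exactly as in the proof of Proposition \ref{equality of measures}.

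Fix such a $\psi$ and set $\kappa = \psi_{p-\mathrm{fin}}$. On the left, I would expand $L_p(f_{\varphi/K}^p,\kappa)$ via the interpolation formula (\ref{interpolation no p-estab}), which expresses it in terms of $\Lambda(f_{\varphi/K},\psi)$, the Euler factor $E_p(f_{\varphi/K}^p)$, the Gauss sum $W(\psi)$, and $\Omega_{f_{\varphi/K}}$. On the right, I would apply Theorem \ref{Katztheorem} to each Katz factor: the relevant complex Hecke characters are $\nu_1 := \varphi^c\psi\,|\cdot|_{\mathbb{A}_K}$ and $\nu_2 := \varphi^c\psi^c\,|\cdot|_{\mathbb{A}_K}$, whose infinity types $(q+1,r-k)$ and $(r+1,q-k)$ both satisfy $a > 0 \geqslant b$ as required.

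Next I would invoke the factorization of Lemma \ref{Prop 9.1}, shifted to $s=0$: $L(f_{\varphi/K},\psi,1)=L(\nu_1,0)\,L(\nu_2\lambda_K,0)$. Substituting this into the LHS and replacing each complex $L$-value by the Katz interpolation formula produces two copies of Katz $p$-adic $L$-values together with a $\Omega_\infty/\Omega_p$ ratio. The exponents of $\Omega_\infty$ and $\Omega_p$ sum to $(q{+}1{-}r{+}k)+(r{+}1{-}q{+}k)=2k+2$, which against $\Omega_{f_{\varphi/K}}=(\Omega_\infty/\pi)^{2k+2}$ on the LHS produces the desired prefactor $1/\Omega_p^{2k+2}$. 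The $\Gamma(q+1)\Gamma(r+1)$ and $(2\pi i)^{q+r+2}$ factors hidden in the definition of $\Lambda$ cancel against their counterparts $\Gamma(a_i)$, $\pi^{b_i}$, and the scalar $\sqrt{D}^{\,b_i}\,2^{-b_i}$ coming from the two Katz formulas.

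The main obstacle will be the careful bookkeeping of the remaining auxiliary constants: (i) factoring the global Gauss sum $W(\psi)$ into the product of the local Gauss sums $W_\gp(\nu_1)$ and $W_{\overline{\gp}}(\nu_2)$ appearing on the Katz side, using the split $\gf=\gp^t\overline{\gp}^s$ and the relationship between $\tau(\psi)$ and $\tau_\gp(\psi)\tau_{\overline{\gp}}(\psi)$; (ii) checking that the Euler factor $E_p(f_{\varphi/K}^p)=\prod_{\gq\mid p}(1-\varphi(\gp)\psi(\gq)/N(\gp))(1-[\varphi(\overline{\gp})\psi(\gq)]^{-1})$ decomposes precisely as the product of the two pairs of Euler factors $(1-\nu_i(\overline{\gp}))(1-[\nu_i(\gp)N(\gp)^{t_i}]^{-1})$ from Katz, using that $\lambda_\gp=\lambda_{\overline{\gp}}=\varphi(\overline{\gp})$ so $\lambda_\gf=\varphi(\overline{\gp})^{t+s}$; and (iii) verifying that the $\lambda_K$ twist present in the factorization of Lemma \ref{Prop 9.1} does not alter the $p$-adic side, which holds because $p$ splits in $K$, so $\lambda_K=\chi_K\circ N$ is unramified at $p$ and its $p$-adic avatar is trivial on $\mathrm{Cl}_K(p^\infty)$. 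Once these identifications are made at classical points, the density argument yields the equality of measures for all $\kappa\in\mathfrak{X}(\mathrm{Cl}_K(p^\infty))$.
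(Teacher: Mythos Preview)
Your proposal is correct and follows the same route as the paper: compare both sides at classical interpolation points via (\ref{interpolation no p-estab}), Lemma~\ref{Prop 9.1}, and Theorem~\ref{Katztheorem}, then match Euler factors, Gauss sums, and periods, and conclude by density as in Proposition~\ref{equality of measures}. Two small remarks. First, the paper economizes by testing only at finite-order $\psi$ (i.e.\ $q=r=0$), which collapses the $\Gamma$-factor and $\pi$-power bookkeeping you describe; your more general choice works too but is unnecessarily heavier. Second, your justification for dropping $\lambda_K$ is not quite the right one: being unramified at $p$ alone would not make its $p$-adic avatar trivial on $\mathrm{Cl}_K(p^\infty)$; rather, $\lambda_K=\chi_K\circ N_{K/\mathbb{Q}}$ is identically $1$ as a Hecke character of $K$ because $N_{K/\mathbb{Q}}(\mathbb{A}_K^\times)\subset\ker\chi_K$, so the twist is vacuous already at the complex level.
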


\begin{remark}{\label{acomodandoKatz}}
Note that $L_p(\theta_{\varphi/K}^p,-)$ is a function on $\mathfrak{X}(\mathrm{Cl}_K(p^\infty))$ while the right-hand side in the above theorem is 
a function on $\mathfrak{X}(\mathrm{Cl}_K(\overline{\gM} p^\infty)$. To relate them, we see the latter as a function on $\mathfrak{X}(\mathrm{Cl}_K(p^\infty))$ via the map $\mathrm{Cl}_K(\overline{\gM} p^\infty)\rightarrow\mathrm{Cl}_K(p^\infty)$.
\end{remark}
\begin{proof}
Since $L_p(\theta_{\varphi/K}^p,-)$ and $L_p(-)$ are measures, to obtain the equality in the theorem, it suffices to prove it on $p$-adic characters $\psi_{p-\text{fin}}$ coming from finite order Hecke characters $\psi$ of conductor $\gf=\gp^t\overline{\gp}^s$. 

For such characters, from (\ref{interpolation no p-estab}) and (\ref{gammafactors}) we have
\begin{equation*}
L_p(\theta_{\varphi/K}^p,\psi_{p-\mathrm{fin}})=E_p(\theta_{\varphi/K}^p)\left[ \frac{D_Kw_K\tau(\psi)}{(-1)^{k+1}2 \varphi(\overline{\gp})^{t+s}2^2} \right] \frac{L(\theta_{\varphi/K},\psi,1)}{\pi^2\Omega_{\theta_{\varphi/K}}}.
\end{equation*}
Recall from Lemma \ref{Prop 9.1} that
\begin{equation}\label{e: L-funct}
L(\theta_{\varphi/K},\psi,1)=L(\varphi^c\psi|\cdot|_{\mathbb{A}_K},0)L(\varphi^c\psi^c\lambda_K|\cdot|_{\mathbb{A}_K},0),    
\end{equation}
and denote the Hecke characters above by $\eta=\varphi^c\psi|\cdot|_{\mathbb{A}_K}$ and by $\eta'=\varphi^c\psi^c\lambda_K|\cdot|_{\mathbb{A}_K}$. By Theorem \ref{Katztheorem} we obtain the following interpolations
\begin{equation*}
\frac{L_p(\eta_{p-\text{fin}})}{\Omega_p^{k+1}}=\frac{\sqrt{D_K}^{-k}\tau_{\gp}(\eta)}{2(-1)^{k-1}w_K^{-1}2^{-k}N(\gp^{t})} \left(1-\frac{\varphi(\gp)\psi(\overline{\gp})}{N(\gp)}\right) \left(1-\frac{1}{\varphi(\overline{\gp})\psi(\gp)}\right)\frac{L(\eta,0)}{\pi^{-k}\Omega_\infty^{k+1}}
\end{equation*}
and 
\begin{equation*}
\frac{L_p(\eta'_{p-\text{fin}})}{\Omega_p^{k+1}}=\frac{\sqrt{D_K}^{-k}\tau_{\gp}(\eta')}{2(-1)^{k-1}w_K^{-1}2^{-k}N(\gp^{s})} \left(1-\frac{\varphi(\gp)\psi(\gp)}{N(\gp)}\right) \left(1-\frac{1}{\varphi(\overline{\gp})\psi(\overline{\gp})}\right)\frac{L(\eta',0)}{\pi^{-k}\Omega_\infty^{k+1}}.
\end{equation*}
The local Gauss sums
\begin{equation*}
\tau_p(\eta)=\tau_{\gp}(\varphi^c\psi|\cdot|_{\mathbb{A}_K})=(\varphi^c\psi)(\varpi_{\gp}^{-t})N(\gp^{t})\sum_{u\in(\mathcal{O}_{\gp}/{\gp}^t)^\times}\psi_{\gp}(u)\textbf{e}_K(ud_\gp^{-1}\varpi_{\gp}^{-t}),
\end{equation*}
\begin{equation*}
\tau_p(\eta')=\tau_{\gp}(\varphi^c\psi^c\lambda_K|\cdot|_{\mathbb{A}_K})=(\varphi^c\psi^c)(\varpi_{\gp}^{-s})N(\gp^{s})\sum_{v\in(\mathcal{O}_{\overline{\gp}}/{\overline{\gp}}^s)^\times}\psi_{\overline{\gp}}(v)\textbf{e}_K(vd_{\overline{\gp}}^{-1}\varpi_{\overline{\gp}}^{-s});
\end{equation*} 
are related with $\tau(\psi)$ (see for example ii) in \cite[Prop 2.14]{narki}) by
\begin{align*}
\tau_p(\eta)\tau_p(\eta')&=N(\gp^{t+s})\varphi^c(\varpi_{\gp}^{-t-s})\psi^{-1}(x_\gf)\sum_{b\in(\mathcal{O}_K/\gf)^\times}\psi_\gf(b)\textbf{e}_K(b d^{-1}x_\gf^{-1})\\
&=N(\gp^{t+s})\varphi(\overline{\gp})^{-t-s}\tau(\psi),
\end{align*}
where $x_\gf$ is the idele corresponding to $\gf$ satisfying $(x_\gf)_\gq=1$ for $\gq$ coprime to $(p)$, $(x_\gf)_\gp=\varpi_\gp^{t}$ and $(x_\gf)_{\overline{\gp}}=\varpi_{\overline{\gp}}^{s}$.

Rearranging and using (\ref{e: L-funct}) we have

\begin{equation*}
\frac{L_p(\eta_{p-\text{fin}})L_p(\eta'_{p-\text{fin}})}{\Omega_p^{2k+2}}= E_p(\theta_{\varphi/K}^p)\left[ \frac{D_Kw_K\tau(\psi)}{(-1)^{k+1}2 \varphi(\overline{\gp})^{t+s}2^2} \right] \frac{L(\theta_{\varphi/K},\psi,1)}{\Omega'_{\theta_{\varphi/K}}},
\end{equation*}
where $\Omega'_{\theta_{\varphi/K}}=\frac{2}{w_K}\left(\frac{\sqrt{D_K}\Omega_\infty}{2\pi i}\right)^{2k+2}$.

Finally, the result follows by normalizing the period $\Omega_{\theta_{\varphi/K}}$ in Definition \ref{periodberger} to be $\Omega'_{\theta_{\varphi/K}}$ and using the same argument in the proof of Proposition \ref{equality of measures}.
\end{proof}

\bibliographystyle{amsalpha}
\bibliography{main}
\end{document}